\newtheorem{theorem}{Theorem}
\newtheorem{corollary}[theorem]{Corollary}
\newtheorem{lemma}[theorem]{Lemma}
\theoremstyle{definition}
\theoremstyle{remark}
\newtheorem{rem}{Remark}
\numberwithin{equation}{section}
\numberwithin{theorem}{section}
\numberwithin{defn}{section}
\begin{document}
\title[Zagier's rank three examples for Nahm's problem]
 {Explicit Forms and Proofs of Zagier's Rank Three Examples for Nahm's Problem}

\author{Liuquan Wang}
\address{School of Mathematics and Statistics, Wuhan University, Wuhan 430072, Hubei, People's Republic of China}
\email{wanglq@whu.edu.cn;mathlqwang@163.com}

\subjclass[2010]{11P84, 33D15, 33D60, 11F03}

\keywords{Nahm's conjecture; Rogers-Ramanujan type identities; sum-product identities; integral method; Slater's list; Bressoud polynomial}

\begin{abstract}
Let $r\geq 1$ be a positive integer, $A$ a real positive semi-definite symmetric $r\times r$ rational matrix, $B$ a rational vector of length $r$, and $C$ a rational scalar. Nahm's problem is to find all triples $(A,B,C)$ such that the $r$-fold $q$-hypergeometric series
$$f_{A,B,C}(q):=\sum_{n=(n_1,\dots,n_r)^\mathrm{T}\in (\mathbb{Z}_{\geq 0})^r} \frac{q^{\frac{1}{2}n^\mathrm{T} An+n^\mathrm{T} B+C}}{(q;q)_{n_1}\cdots (q;q)_{n_r}}$$
becomes a modular form, and we call such $(A,B,C)$ a modular triple. When the rank $r=3$, after extensive computer searches, Zagier provided twelve sets of conjectural modular triples and proved three of them. We prove a number of Rogers-Ramanujan type identities involving triple sums. These identities give modular form representations for and thereby verify all of Zagier's rank three examples. In particular, we prove a conjectural identity of Zagier.
\end{abstract}

\maketitle

\section{Introduction}\label{sec-intro}
In his 1894 paper, L.J.\ Rogers \cite{Rogers1894} discovered many sum-product $q$-series identities. Among other results,
he proved that \cite[pp.\ 328,330,331]{Rogers1894}
\begin{align}
&\sum_{n=0}^\infty \frac{q^{n^2}}{(q;q)_n}=\frac{1}{(q,q^4;q^5)_\infty}, \label{RR-1} \\
&\sum_{n=0}^\infty \frac{q^{n^2+n}}{(q;q)_n}=\frac{1}{(q^2,q^3;q^5)_\infty}, \label{RR-2} \\
&\sum_{n=0}^\infty \frac{q^{n^2}}{(q^4;q^4)_n} =\frac{1}{(-q^2;q^2)_\infty (q,q^4;q^{5})_\infty}, \label{Slater20} \\
&\sum_{n=0}^\infty \frac{q^{n(n+2)}}{(q^4;q^4)_n}=\frac{1}{(-q^2;q^2)_\infty (q^2,q^{3};q^{5})_\infty}. \label{Slater16}
\end{align}
Here and throughout this paper, we always assume $|q|<1$ for convergence and use the following $q$-series notation:
\begin{align}
(a;q)_0:=1, \quad (a;q)_n:=\prod\limits_{k=0}^{n-1}(1-aq^k), \quad (a;q)_\infty :=\prod\limits_{k=0}^\infty (1-aq^k),  \\
(a_1,\cdots,a_m;q)_n:=(a_1;q)_n\cdots (a_m;q)_n, \quad n\in \mathbb{N}\cup \{\infty\}.
\end{align}

Rogers' works on this kind of sum-product identities were quite neglected for some time. According to G.H.\ Hardy \cite{Hardy}, S. Ramanujan rediscovered the identities \eqref{RR-1}--\eqref{RR-2} before 1913 and he came across Rogers' paper \cite{Rogers1894} in 1917. Thereafter, the identities \eqref{RR-1} and \eqref{RR-2} were usually referred as the Rogers-Ramanujan identities. Around 1917, I. Schur \cite{Schur} independently rediscovered these two identities and gave combinatorial interpretations to them.

The Rogers-Ramanujan identities play important roles in different branches of mathematics and physics. In combinatorics, it has interesting partition interpretations and stimulate a number of researches on finding similar partition identities. It is closely related to vertex operator algebras, representation theory of Lie algebras and knot theory. Moreover, the functions in these identities are modular forms, and also appear in solutions to the hard hexagon model in statistical mechanics.

Ramanujan's rediscovery of the identities \eqref{RR-1}--\eqref{RR-2} attracted people's attention to Rogers' works. Interests on finding similar identities, which are usually called as Rogers-Ramanujan type identities have lasted for more than one hundred years and are still continuing. One of the famous works on this topic is L.J.\ Slater's list \cite{Slater}, which provides 130 such identities including some known results. For example, the identities \eqref{Slater20} and \eqref{Slater16} appear as equations (20) and (16) in \cite{Slater}, respectively. An elaborate introduction to Rogers-Ramanujan type identities can be found in A.V.\ Sills' book \cite{Sills-book}.

As mentioned before, the functions in \eqref{RR-1} and \eqref{RR-2} are both modular forms, which is clear from the product sides but not clear from the sum sides. An important question in the theory of $q$-series and modular forms is to judge what kind of basic hypergeometric series are modular forms. This question has not been completely answered yet. In a series of works, W.\ Nahm \cite{Nahm1994,Nahmconf,Nahm2007} considered the series
$$f_{A,B,C}(q):=\sum_{n=(n_1,\dots,n_r)^\mathrm{T} \in (\mathbb{Z}_{\geq 0})^r} \frac{q^{\frac{1}{2}n^\mathrm{T} An+n^\mathrm{T} B+C}}{(q;q)_{n_1}\cdots (q;q)_{n_r}},$$
where $r\geq 1$ is a positive integer, $A$ is a real positive definite symmetric $r\times r$ matrix, $B$ is a vector of length $r$, and $C$ is a scalar. Nahm posed the problem to describe all such $A,B$ and $C$ with rational entries for which $f_{A,B,C}(q)$ is a modular form. Following notions in the literature, we shall call such $(A,B,C)$ as a modular triple, and call $A$ the matrix part, $B$ the vector part and $C$ the scalar part of it. It is worth mentioning that Nahm's motivation of this problem comes from physics and the modular forms $f_{A,B,C}(q)$ are expected to be characters of rational conformal field theories. 

Nahm \cite{Nahm2007} made a conjecture which provides sufficient and necessary conditions on the matrix part of a modular triple. The conjecture is formulated in terms of the Bloch group and a system of polynomial equations induced by the matrix part. We refer the reader to D.\ Zagier's paper \cite[p.\ 43]{Zagier} for precise statement of this conjecture.

When the rank $r=1$, the identities \eqref{RR-1}--\eqref{Slater16} showed that
\begin{align}\label{rank1-Rogers}
(A,B,C)=(2,0,-1/60), ~~ (2,1,11/60), ~~ (1/2,0,-1/40), ~~ (1/2,1/2,1/40)
\end{align}
are all modular triples. Zagier \cite{Zagier} studied Nahm's problem and found many possible modular triples. In particular, when the rank $r=1$, Zagier confirmed Nahm's conjecture and proved that there are exactly seven modular triples. Besides the four aforementioned triples, the other three are
\begin{align}\label{rank1}
\left(1,0,-1/48\right), \quad \left(1,1/2,1/24\right), \quad \left(1,-1/2,1/24\right),
\end{align}
which is easily justified by Euler's identities (see \eqref{Euler}).

Nahm's problem becomes more difficult when the rank is larger. For $r=2,3$, Zagier \cite{Zagier} did extensive computer searches on possible modular triples. In the rank two case, similar searches have also been done by M.\ Terhoeven \cite{Terhoeven}. When the rank $r=2$, Zagier found eleven sets of possible modular triples \cite[Table 2]{Zagier}.  Specifically speaking, Zagier found several values of $B$ and $C$ such that $f_{A,B,C}(q)$ is or appears to be modular for $A$ being
\begin{align*}
\begin{pmatrix} \alpha & 1-\alpha \\ 1-\alpha & \alpha \end{pmatrix}, \quad \begin{pmatrix} 2 & 1 \\ 1 & 1 \end{pmatrix}, \quad
\begin{pmatrix} 4 & 1 \\ 1 & 1 \end{pmatrix}, \quad \begin{pmatrix} 4 & 2 \\ 2 & 2 \end{pmatrix}, \quad \begin{pmatrix} 2 & 1 \\ 1 & 3/2 \end{pmatrix}, \quad \begin{pmatrix}  4/3 & 2/3 \\ 2/3 & 4/3 \end{pmatrix}
\end{align*}
and their inverses. When the rank $r=3$,  Zagier found twelve possible modular triples \cite[Table 3]{Zagier}.

However, Zagier did not verify all of his (conjectural) examples. He stated explicit identities which reveal the modularity of $f_{A,B,C}(q)$ only in few cases. In the rank two case, he proved his example for $A=\left(\begin{smallmatrix} \alpha & 1-\alpha \\  1-\alpha & \alpha \end{smallmatrix}\right)$. Let $\lfloor x\rfloor $ denote the integer part of $x$. Zagier also stated the following conjectural identities:
\begin{align}
f_{\left(\begin{smallmatrix} 4 & 1 \\ 1 & 1 \end{smallmatrix}\right), \left(\begin{smallmatrix} 0 \\ 1/2\end{smallmatrix}\right), \frac{1}{120}}(q)=\frac{1}{(q;q)_\infty}\sum_{n\equiv 1 \pmod{10}} (-1)^{\lfloor n/10\rfloor}q^{n^2/20-1/24},  \label{Zagier-exam4-1}\\
f_{\left(\begin{smallmatrix} 4 & 1 \\ 1 & 1 \end{smallmatrix}\right), \left(\begin{smallmatrix}2 \\ 1/2 \end{smallmatrix}\right),  \frac{49}{120}}(q)=\frac{1}{(q;q)_\infty}\sum_{n\equiv 3\pmod{10}} (-1)^{\lfloor n/10\rfloor }q^{n^2/20-1/24}. \label{Zagier-exam4-2}
\end{align}
Assuming the truth of these two identities, this justifies his example for $A=\left(\begin{smallmatrix} 4 & 1 \\ 1 & 1 \end{smallmatrix}\right)$. When the rank $r=3$, Zagier only proved the first three examples, which correspond to
\begin{align}\label{eq-rankthree}
A=\begin{pmatrix} \alpha h^2+A_1 & \alpha & -\alpha h \\ \alpha h & \alpha & 1-\alpha \\
-\alpha h & 1-\alpha & \alpha \end{pmatrix}, \quad A_1\in \{1/2,1,2\}.
\end{align}
For the example corresponding to
\begin{align}
A=\begin{pmatrix} 2 &1 &1 \\ 1 & 2 & 0 \\ 1 & 0 & 2 \end{pmatrix}, \quad
& B =\begin{pmatrix} 0 \\ \nu \\ -\nu \end{pmatrix}, \quad C=\nu^2-\frac{1}{24}, \quad \nu \in \mathbb{Q},
\end{align}
Zagier stated (conjecturally) that \cite[Eq.\ (33)]{Zagier}
\begin{align}\label{eq-Zagier-conj}
f_{A,B,C}(q)=\frac{1}{(q;q)_\infty} \sum_{n\in \mathbb{Z}+\nu} q^{n^2-1/24}.
\end{align}
He mentioned that this identity is ``checked only numerically''.

After the work of Zagier \cite{Zagier}, the rank two case has been discussed in several works. For instance, M.\ Vlasenko and S.\ Zwegers \cite{VZ} found all modular triples $(A,B,C)$ for $A$ being $\left(\begin{smallmatrix} a & \lambda-a \\ \lambda-a & a \end{smallmatrix}\right)$ with $a\in \mathbb{Q}$ and $\lambda \in \{\frac{1}{2},1,2\}$. One of the remarkable outcomes of their results is that they provided two counterexamples to Nahm's conjecture. Specifically, they found that when $A$ is
$\left(\begin{smallmatrix} 3/2 & 1/2 \\ 1/2 & 3/2 \end{smallmatrix}\right)$ or $\left(\begin{smallmatrix} 3/4 & -1/4 \\ -1/4 & 3/4  \end{smallmatrix}\right)$, which do not satisfy Nahm's criterion, there are $B$ and $C$ such that $(A,B,C)$ are modular triples.
Besides, Vlasenko and Zwegers \cite{VZ} proved Zagier's example for $A=\left(\begin{smallmatrix} 1 & -1/2 \\ -1/2 & 1 \end{smallmatrix}\right)$. The also stated some conjectural identities for $A=\left(\begin{smallmatrix} 4/3 & 2/3 \\ 2/3 & 4/3\end{smallmatrix}\right)$, which were later justified by I. Cherednik and B. Feigin \cite{Feigin} via nilpotent double affine Hecke algebras. C. Calinescu, A. Milas and M. Penn \cite{CMP} proved some identities which verify Zagier's example for $A=\left(\begin{smallmatrix} 1 & -1  \\ -1 &2 \end{smallmatrix}\right)$. In his thesis,  C.-H. Lee \cite{LeeThesis} partially confirmed Zagier's examples for $A=\left(\begin{smallmatrix} 2 & 1 \\ 1 & 1 \end{smallmatrix}\right)$.  In a recent work of the author \cite{Wang2022}, together with several known cases in the literature, we verified ten of Zagier's rank two examples and stated conjectural identities for the remaining one.

While the rank two examples have been verified in several works, in contrast, Zagier's rank three examples have not been discussed much. For convenience, let us label the rank three examples in Zagier's list from 1 to 12 according to their order in \cite[Table 3]{Zagier}. It should be noted that the matrix parts in Examples 4 and 5 are only positive semi-definite. For convenience, we shall relax the restriction on the matrix part $A$ in the definition of modular triples by allowing $A$ to be positive semi-definite. As mentioned before, Zagier verified Examples 1-3 (see \eqref{eq-rankthree}). Moreover, Example 9 corresponds to special cases of the Andrews-Gordon identity (see \eqref{AG}). Explicit forms for other examples were not stated and their correctness were not known before this work.

The aim of this paper is to present explicit identities and give complete proofs for all of Zagier's rank three examples. We will establish Rogers-Ramanujan type identities involving triple sums for each of them. The sum side of each of these identities is essentially $f_{A,B,C}(q)$ with $(A,B,C)$ from Zagier's examples, and the product side shows clearly that it is a modular form. For example, for Zagier's seventh example we will prove five identities including (see Theorem \ref{thm-7})
\begin{align}
\sum_{i,j,k\geq 0} \frac{q^{i^2+j^2+k^2+ij+ik+\nu(j-k)}}{(q;q)_i(q;q)_j(q;q)_k} &=\frac{(-q^{1+\nu},-q^{1-\nu},q^2;q^2)_\infty}{(q;q)_\infty}, \label{intro-exam7-1} \\
\sum_{i,j,k\geq 0}\frac{q^{i^2+j^2+k^2+ij+ik+i+j}}{(q;q)_i(q;q)_j(q;q)_k}&=\frac{(q^4;q^4)_\infty^2}{(q;q)_\infty (q^2;q^2)_\infty}. \label{intro-exam7-2}
\end{align}
Here \eqref{intro-exam7-1} confirms Zagiers' conjectural identity \eqref{eq-Zagier-conj}. For Zagier's eleventh example we will prove some identities such as (see Theorem \ref{thm-11})
\begin{align}
\sum_{i,j,k\geq 0}\frac{q^{4i^2+2j^2+k^2+4ij-2ik-2jk}}{(q^2;q^2)_i(q^2;q^2)_j(q^2;q^2)_k}&=\frac{(q^2;q^2)_\infty^3(q^5,q^7,q^{12};q^{12})_\infty}{(q;q)_\infty^2 (q^4;q^4)_\infty^2},  \label{intro-exam11-1} \\
\sum_{i,j,k\geq 0} \frac{q^{4i^2+2j^2+k^2+4ij-2ik-2jk+2i}}{(q^2;q^2)_i(q^2;q^2)_j(q^2;q^2)_k}&=\frac{(q^2;q^2)_\infty^3(q^3,q^9,q^{12};q^{12})_\infty}{(q;q)_\infty^2 (q^4;q^4)_\infty^2}, \label{intro-exam11-3}\\
\sum_{i,j,k\geq 0} \frac{q^{4i^2+2j^2+k^2+4ij-2ik-2jk+4i+2j}}{(q^2;q^2)_i(q^2;q^2)_j(q^2;q^2)_k}&=\frac{(q^2;q^2)_\infty^3(q,q^{11},q^{12};q^{12})_\infty}{(q;q)_\infty^2 (q^4;q^4)_\infty^2}. \label{intro-exam11-5}
\end{align}

For most of the examples, the idea of our proof is to reduce triple sum identities to single-sum or double-sum identities.  This can usually be done by summing over some of the summation indices first. However, this does not work for Examples 7 and 10. The reduction processes for these two examples are much more technical. We will cleverly use an integral method and follow techniques in the author's work \cite{Wang}. Surprisingly, for two cases of Example 7, we will prove the desired triple sum identities (see \eqref{intro-exam7-1} and \eqref{intro-exam7-2}) by using a quadruple sum identity. After reducing the sum side to a single sum or double sums, we will either be able to use some known identities or have to establish some new sum-product identities. For instance, for the identity \eqref{intro-exam11-3} we will prove that
\begin{align}
\sum_{n=0}^\infty \frac{q^{n^2}(-q;q^2)_n}{(q^2;q^2)_n} \sum_{i=0}^n q^{2i^2+2i} {n\brack i}_{q^2} &=\frac{(-q;q^2)_\infty (q^3,q^9,q^{12};q^{12})_\infty}{(q^2;q^2)_\infty}. \label{eq-thm-B3}
\end{align}
Here ${n \brack i}_q$ is the $q$-binomial coefficient (see \eqref{q-binomial}).

The rest of this paper is organized as follows. In Section \ref{sec-pre} we collect some useful known identities which are important in proving Zagier's examples. In Section \ref{sec-new} we will establish some new sum-product identities (see Theorems \ref{thm-Wang},  \ref{thm-Bressoud} and \ref{thm-12-lem}). They will be used in Examples 8, 10 and 12 and are of independent interests themselves.  In Section \ref{sec-examples} we will give proofs for Zagier's examples one by one.

\section{Preliminaries}\label{sec-pre}
In this section, we introduce some notations and identities that will be used in our proofs. To make our formulas more compact, sometimes we will use the symbols:
$$J_m:=(q^m;q^m)_\infty, \quad J_{a,m}:=(q^a,q^{m-a},q^m;q^m)_\infty.$$

Recall the $q$-binomial theorem \cite[Theorem 2.1]{Andrews}:
\begin{align}\label{q-binomial}
\sum_{n=0}^\infty \frac{(a;q)_n}{(q;q)_n}z^n=\frac{(az;q)_\infty}{(z;q)_\infty}, \quad |z|<1.
\end{align}
As important corollaries of this theorem, Euler's $q$-exponential identities state that \cite[Corollary 2.2]{Andrews}
\begin{align}\label{Euler}
\sum_{n=0}^\infty \frac{z^n}{(q;q)_n}=\frac{1}{(z;q)_\infty}, \quad \sum_{n=0}^\infty \frac{q^{\binom{n}{2}} z^n}{(q;q)_n}=(-z;q)_\infty, \quad |z|<1.
\end{align}

We define the $q$-binomial coefficient or Gaussian coefficient
$${n\brack m}={n \brack m}_q:=\left\{\begin{array}{ll}
\frac{(q;q)_n}{(q;q)_m (q;q)_{n-m}}, & 0\leq m \leq n, \\
0, & \text{otherwise}. \end{array}\right.$$
One finite version of the second identity in \eqref{Euler} is
\begin{align}\label{eq-finite}
(-z;q)_n=\sum_{i=0}^n { n\brack i}z^i q^{i(i-1)/2}.
\end{align}
The Jacobi triple product identity \cite[Theorem 2.8]{Andrews} is
\begin{align}\label{Jacobi}
(q,z,q/z;q)_\infty=\sum_{n=-\infty}^\infty (-1)^nq^{\binom{n}{2}}z^n.
\end{align}

For Examples 1-3 we need the Durfee rectangle identity: for any fixed integer $n$,
\begin{align}\label{Durfee}
\sum_{j-k=n} \frac{q^{jk}}{(q)_j(q)_k}=\frac{1}{(q;q)_\infty}.
\end{align}

As mentioned in the introduction, some of Zagier's examples can be reduced to some known identities. Besides \eqref{RR-1}--\eqref{Slater16},  we will also need some other identities in the literature:
\begin{align}
&\sum_{n=0}^\infty \frac{q^{n(n+2)}(-q;q^2)_n}{(q^4;q^4)_n} =\frac{J_6J_{12}}{J_{3,12}J_{4,12}}, \quad \text{(Ramanujan \cite[Ent.\  4.2.11]{Lost2}, Stanton \cite{Stanton})} \label{Rama-Stanton} \\
&\sum_{n=0}^\infty \frac{q^{n(n+1)/2}(-1;q)_n}{(q;q)_n(q;q^2)_n}=\frac{J_2J_5^2}{J_1^2J_{10}}, \quad \text{(Rogers \cite[p.\ 330 (4), line 3]{Rogers1917}, corrected)} \label{eq-Rogers-mod10} \\
&\sum_{n=0}^\infty \frac{(-1)^nq^{n^2}(q;q^2)_n}{(-q;q^2)_n(q^4;q^4)_n}=\frac{J_5^2J_{1,10}}{J_{10}J_{2,10}^2}, \quad \text{(S.\ 21)} \label{Slater21} \\
&\sum_{n=0}^\infty \frac{(-1)^nq^{n(n+2)}(q;q^2)_n}{(-q;q^2)_n(q^4;q^4)_n}=\frac{J_1J_5^2J_{2,10}}{J_2^2J_{10}J_{1,5}}, \label{eq-BMS}  \\
& \qquad \qquad \qquad \qquad \qquad  \text{(Bowman-Mc Laughlin-Sills \cite[Eq.\ (2.17)]{BMS})} \nonumber \\
&\sum_{n=0}^\infty \frac{(-1)^nq^{n(n+2)}(q;q^2)_n}{(-q;q^2)_{n+1}(q^4;q^4)_n}=\frac{J_1J_{10}^2}{J_2^2J_5}, \label{eq-MSP}\\
& \qquad \qquad \qquad \qquad \qquad   \text{(Mc Laughlin- Sills-Zimmer \cite[Eq.\ (2.5)]{MSP})}  \nonumber \\
&\sum_{n=0}^\infty \frac{q^{n^2}(-q;q^2)_n}{(q^4;q^4)_n}=\frac{J_6J_{12}J_{3,12}}{J_{1,12}J_{4,12}J_{5,12}}, \quad \text{(Ramanujan \cite[Ent.\  4.2.7]{Lost2}, S.\ 25)} \label{Slater25} \\
&\sum_{n=0}^\infty \frac{q^{n^2+n}(-q^2;q^2)_n}{(q;q)_{2n+1}}=\frac{J_{3,12}}{J_1}, \quad \text{(Ramanujan \cite[Ent.\  4.2.13]{Lost2}, S.\ 28)}, \label{Slater28} \\
&\sum_{n=0}^\infty \frac{q^{2n(n+1)}(q;q^2)_{n+1}}{(q^2;q^2)_{2n+1}}=\sum_{n=0}^\infty \frac{q^{2n(n+1)}}{(q^2;q^2)_n(-q;q)_{2n+1}}=\frac{J_{1,7}}{J_2},  \label{Slater31}  \\
  &\qquad \qquad \qquad \qquad \qquad \qquad \qquad \qquad \text{(Rogers \cite[p.\ 331(6)]{Rogers1894}, S.\ 31)} \nonumber  \\
&\sum_{n=0}^\infty \frac{q^{2n^2+2n}(q;q^2)_n}{(q^2;q^2)_{2n}}=\sum_{n=0}^\infty \frac{q^{2n(n+1)}}{(q^2;q^2)_n(-q;q)_{2n}}=\frac{J_{2,7}}{J_2}, \label{Slater32} \\
  &\qquad \qquad \qquad \qquad \qquad \qquad \qquad \qquad  \text{(Rogers \cite[p.\ 342]{Rogers1917}, S.\ 32)}  \nonumber  \\
&\sum_{n=0}^\infty \frac{q^{2n^2}(q;q^2)_n}{(q^2;q^2)_{2n}}=\sum_{n=0}^\infty \frac{q^{2n^2}}{(q^2;q^2)_n(-q;q)_{2n}}=\frac{J_{3,7}}{J_2},   \label{Slater33} \\
  &\qquad \qquad \qquad \qquad \qquad \qquad \qquad \qquad  \text{(Rogers \cite[p.\ 339]{Rogers1917}, S.\ 33)}  \nonumber \\
&\sum_{n=0}^\infty \frac{q^{n(n+3)/2}(-q;q)_n}{(q;q)_n(q;q^2)_{n+1}}=\frac{J_{10}^3}{J_1J_5J_{3,10}}, \quad \text{(Rogers \cite[p.\ 330 (4), line 2]{Rogers1917}, S.\ 43)} \label{Slater43} \\
&\sum_{n=0}^\infty \frac{q^{n(n+1)/2}(-q;q)_n}{(q;q)_n(q;q^2)_{n+1}}=\frac{J_{10}^3}{J_1J_5J_{1,10}}, \quad \text{(Rogers \cite[p.\ 330 (4), line 1]{Rogers1917}, S.\ 45)} \label{Slater45} \\
&\sum_{n=0}^\infty \frac{q^{n^2+n}(-1;q^2)_n}{(q;q)_{2n}}=\frac{J_6J_{12}^4}{J_{2,12}^2J_{3,12}^2J_{4,12}}. \quad \text{(Ramanujan \cite[Ent.\  4.2.10]{Lost2}, S.\ 48)} \label{Slater48}
\end{align}
Here following the labels in \cite{MSP}, we use the label S.\ $n$ to denote the equation ($n$) in Slater's list \cite{Slater}.
The identities \eqref{Slater31}--\eqref{Slater33}  were later independently rediscovered by Selberg \cite{Selberg} and Dyson \cite{Dyson} and are called as the Rogers-Selberg mod 7 identities.

We will occasionally use some of the above identities with $q$ replaced by $-q$. For instance, for Example 10 we will use not only \eqref{Slater21}--\eqref{eq-MSP}  but also the following identities:
\begin{align}
\sum_{n=0}^\infty \frac{q^{n^2}(-q;q^2)_n}{(q;q^2)_n(q^4;q^4)_n}&=\frac{J_{10}^2J_{20}^5}{J_{1,20}J_{2,20}J_{5,20}^2J_{8,20}^2J_{9,20}}, \label{Slater21-dual} \\
\sum_{n=0}^\infty \frac{q^{n(n+2)}(-q;q^2)_n}{(q;q^2)_n(q^4;q^4)_n}&=\frac{J_{10}^2J_{20}^5}{J_{3,20}J_{4,20}^2J_{5,20}^2J_{6,20}J_{7,20}}, \label{eq-BMS-dual} \\
\sum_{n=0}^\infty \frac{q^{n(n+2)}(-q;q^2)_n}{(q;q^2)_{n+1}(q^4;q^4)_n}&=\frac{J_2J_5J_{20}}{J_1J_4J_{10}}.  \label{eq-MSP-dual}
\end{align}
They are obtained after replacing $q$ by $-q$ in \eqref{Slater21}--\eqref{eq-MSP}, respectively.

For Example 9 we need the Andrews-Gordon identity \cite{Andrews1974,Gordon1961}: for integers $k,s$ such that $k\geq 2$ and $1\leq s \leq k$,
\begin{align}
\sum_{n_1,\cdots,n_{k-1}\geq 0} \frac{q^{N_1^2+\cdots+N_{k-1}^2+N_s+\cdots +N_{k-1}}}{(q;q)_{n_1}(q;q)_{n_2}\cdots (q;q)_{n_{k-1}}}  =\frac{(q^s,q^{2k+1-s},q^{2k+1};q^{2k+1})_\infty}{(q;q)_\infty} \label{AG}
\end{align}
where if $j\leq k-1$, $N_j=n_j+\cdots+n_{k-1}$ and $N_k=0$. This analytical form is due to G.E. Andrews \cite{Andrews1974} and its combinatorial version was earlier given by Gordon \cite{Gordon1961}. This identity is a generalization of the Rogers-Ramanujan identities.

D.M.\ Bressoud \cite{Bressoud1979} gave the even moduli companion of the Andrews-Gordon identity: for positive integer $k\geq 2$ and $1\leq s \leq k$,
\begin{align}\label{eq-Bressoud}
\sum_{n_1,\cdots,n_{k-1}\geq 0} \frac{q^{N_1^2+\cdots+N_{k-1}^2+N_s+\cdots +N_{k-1}}}{(q;q)_{n_1}(q;q)_{n_2}\cdots (q;q)_{n_{k-2}} (q^2;q^2)_{n_{k-1}}} =\frac{(q^s,q^{2k-s},q^{2k};q^{2k})_\infty}{(q;q)_\infty}
\end{align}
where as before, $N_j=n_j+\cdots+n_{k-1}$ if $j\leq k-1$ and $N_k=0$. When $(k,s)=(3,3)$ and $(3,1)$, we get
\begin{align}
\sum_{i,j\geq 0} \frac{q^{i^2+2ij+2j^2}}{(q;q)_i(q^2;q^2)_j}&=\frac{(q^3;q^3)_\infty^2}{(q;q)_\infty (q^6;q^6)_\infty}, \label{Bressoud-cor-1} \\
\sum_{i,j\geq 0} \frac{q^{i^2+2ij+2j^2+i+2j}}{(q;q)_i(q^2;q^2)_j} &=\frac{(q^6;q^6)_\infty^2}{(q^2;q^2)_\infty (q^3;q^3)_\infty}.   \label{Bressoud-cor-2}
\end{align}
See also the author's work \cite[Theorem 1.1]{Wang} for new proofs of these two identities using the integral method described below. These identities will be used in Examples 7 and 8.

For Examples 7 and 10 and some double sums reduced from Example 12 we will use an integral method.  The method goes as follows. First, with the help of \eqref{Euler} and \eqref{Jacobi}, we express the original series as contour integrals of certain infinite products. This step can be done using the following simple fact. For a Laurent series $f(z)=\sum_{n=-N}^\infty a(n)z^n$, if we use $[z^n]f(z)=a(n)$ to denote the coefficient of $z^n$. Then it is well known that
\begin{align}\label{int-constant}
\oint_K f(z) \frac{dz}{2\pi iz}=[z^0]f(z),
\end{align}
where $K$ is a positively oriented and simple closed contour around the origin. The second step is to calculate the integrals. By doing so we will be able to convert the original series to some new series with different sums. We can then evaluate these new series using some known identities.

Example 7 consists of five cases. We need to use double integrals instead of single integral. A key step in using the integral method is the $q$-Gauss summation formula:
\begin{align}\label{q-Gauss}
\sum_{n=0}^\infty \frac{(a,b;q)_n}{(q,c;q)_n}\left(\frac{c}{ab}\right)^n=\frac{(c/a,c/b;q)_\infty}{(c,c/ab;q)_\infty}.
\end{align}
With the help of contour integrals, we are able to reduce three cases of Example 7 to some double sum identities. These include two special cases of Bressoud's identity \eqref{eq-Bressoud} with $(k,s)\in \{(3,1),(3,3)\}$ and a special case of the following identity due to Z.\ Cao and the author \cite[Eq.\ (3.29)]{Cao-Wang}:
\begin{align}\label{eq12-Cao-Wang}
\sum_{i,j\geq 0} \frac{(-1)^i u^{i+2j}q^{i^2+2ij+2j^2-i-j}}{(q;q)_i(q^2;q^2)_j}=(u;q)_\infty.
\end{align}
By the way, for Example 8 (see Theorem \ref{thm-Wang}) we will also need the following companion identity \cite[Eq.\ (3.28)]{Cao-Wang}:
\begin{align}\label{eq-Cao-Wang328}
\sum_{i,j\geq 0} \frac{(-1)^iu^{i+j}q^{i^2+2ij+2j^2}}{(q;q)_i(q^2;q^2)_j}=(uq;q^2)_\infty.
\end{align}

For the remaining two cases of Example 7, surprisingly, we will use integral method to convert them to some quadruple sums. Then we can evaluate these sums utilizing the following identity due to J.\ Dousse and J.\ Lovejoy \cite[Eqs.\ (2.6),(2.7)]{Dousse-Lovejoy}:
\begin{align}\label{DL1112}
    \sum_{i,j,k,\ell\geq 0} \frac{a^{i+\ell}b^{j+\ell}q^{\binom{i+j+k+2\ell+1}{2}+\binom{i+1}{2}+\binom{j+1}{2}+\ell}}{(q;q)_i(q;q)_j(q;q)_k(q^2;q^2)_\ell}=(-q;q)_\infty (-aq^2,-bq^2;q^2)_\infty.
\end{align}
It is worth mentioning that Cao and the author \cite{Cao-Wang} gave a new proof to this identity using the integral method.

For Example 10 we will evaluate the integrals involved using the following result found from the book of G.\ Gasper and M.\ Rahman \cite{GR-book}. Before stating it, we remark that the symbol ``idem $(c_1;c_2,\dots,c_C)$'' after an expression stands for the sum of the $(C-1)$ expressions obtained from the preceding expression by interchanging $c_1$ with each $c_k$, $k=2,3,\dots,C$.

\begin{lemma}\label{lem-integral}
(Cf.\ \cite[Eq.\ (4.10.6)]{GR-book})
Suppose that
$$P(z):=\frac{(a_1z,\dots,a_Az,b_1/z,\dots,b_B/z;q)_\infty}{(c_1z,\dots,c_Cz,d_1/z,\dots,d_D/z;q)_\infty}$$
has only simple poles. We have
\begin{align}\label{eq-integral}
\oint P(z)\frac{dz}{2\pi iz}=& \frac{(b_1c_1,\dots,b_Bc_1,a_1/c_1,\dots,a_A/c_1;q)_\infty }{(q,d_1c_1,\dots,d_Dc_1,c_2/c_1,\dots,c_C/c_1;q)_\infty} \nonumber \\
& \times \sum_{n=0}^\infty \frac{(d_1c_1,\dots,d_Dc_1,qc_1/a_1,\dots,qc_1/a_A;q)_n}{(q,b_1c_1,\dots,b_Bc_1,qc_1/c_2,\dots,qc_1/c_C;q)_n} \nonumber \\
&\times \Big(-c_1q^{(n+1)/2}\Big)^{n(C-A)}\Big(\frac{a_1\cdots a_A}{c_1\cdots c_C} \Big)^n +\text{idem} ~(c_1;c_2,\dots,c_C)
\end{align}
when $C>A$, or if $C=A$ and
\begin{align}\label{cond}
\left|\frac{a_1\cdots a_A }{c_1\cdots c_C}\right|<1.
\end{align}
Here the integration is over a positively oriented contour so that the poles of
$$(c_1z,\dots,c_Cz;q)_\infty^{-1}$$
lie outside the contour, and the origin and poles of $(d_1/z,\dots,d_D/z;q)_\infty^{-1}$ lie inside the contour.
\end{lemma}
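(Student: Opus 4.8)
The plan is to evaluate the integral by residue calculus, deforming the contour of integration outward to infinity. Write the integrand as $P(z)/z$. Its numerator is entire in $z$ and in $1/z$, so the only poles of $P(z)/z$ are at $z=0$, at the points $z=d_\ell q^{m}$ ($m\ge 0$) coming from $(d_1/z,\dots,d_D/z;q)_\infty^{-1}$, and at the points $z=c_k^{-1}q^{-m}$ ($m\ge 0$, $1\le k\le C$) coming from $(c_1z,\dots,c_Cz;q)_\infty^{-1}$. By hypothesis the first two families lie inside the given contour and the last family lies outside it, and all poles are simple. Choose radii $R_j\to\infty$ so that each circle $|z|=R_j$ stays uniformly bounded away from every pole of $P$ (e.g.\ $R_j$ comparable to $q^{-j-1/2}\max_k|c_k|^{-1}$). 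Applying Cauchy's theorem on the annulus between the original contour and $|z|=R_j$, and letting $j\to\infty$, the claim reduces to
\begin{equation*}
\oint P(z)\,\frac{dz}{2\pi iz}=-\sum_{k=1}^{C}\sum_{m=0}^{\infty}\operatorname*{Res}_{z=c_k^{-1}q^{-m}}\frac{P(z)}{z},
\end{equation*}
\emph{provided} the integral over $|z|=R_j$ tends to $0$.

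First I would prove this vanishing, which is where the hypothesis $C>A$, or $C=A$ with $|a_1\cdots a_A/(c_1\cdots c_C)|<1$, is used. As $z\to\infty$ the factors $(b_j/z;q)_\infty$ and $(d_\ell/z;q)_\infty^{-1}$ tend to $1$, so the growth of $P$ is governed entirely by $(a_1z,\dots,a_Az;q)_\infty$ and $(c_1z,\dots,c_Cz;q)_\infty^{-1}$; using $\log|(\alpha z;q)_\infty|=\frac{(\log|\alpha z|)^2}{2\log(1/|q|)}+O(\log|z|)$ uniformly on circles avoiding the poles, one gets $|P(z)|=\exp\!\big(\tfrac{(A-C)(\log|z|)^2}{2\log(1/|q|)}+O(\log|z|)\big)$ on $|z|=R_j$, where in the case $A=C$ the leading coefficient of the $O(\log|z|)$ term is $\log|a_1\cdots a_A/(c_1\cdots c_C)|<0$. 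In either case $\sup_{|z|=R_j}|P(z)|/R_j\to 0$, so the trivial length estimate kills the tail integral. I expect this analytic step, together with the uniformity of the estimate in $\arg z$ and the choice of the radii $R_j$ so that no pole ever lies close to the circle, to be the main technical obstacle.

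Finally I would compute the residues. At $z=c_1^{-1}q^{-m}$ only the single factor $1-c_1zq^{m}$ of $(c_1z;q)_\infty$ vanishes; splitting $(c_1z;q)_\infty=(c_1z;q)_m\,(1-c_1zq^{m})\,(c_1zq^{m+1};q)_\infty$, the residue of $1/(c_1z;q)_\infty$ there is $-\big(c_1q^m\,(q^{-m};q)_m\,(q;q)_\infty\big)^{-1}$, and evaluating every other factor of $P(z)/z$ at $z=c_1^{-1}q^{-m}$ expresses the full residue as an explicit product of infinite $q$-shifted factorials in $b_ic_1$, $a_i/c_1$, $d_\ell c_1$, $c_k/c_1$, times finite $q$-Pochhammer symbols and a monomial in $q$. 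Using only $(x;q)_\infty=(x;q)_m(xq^m;q)_\infty$, the reflection identities $(q^{-m};q)_m=(-1)^mq^{-\binom{m+1}{2}}(q;q)_m$ and $(xq^{-m};q)_m=(q/x;q)_m(-x)^mq^{-\binom{m+1}{2}}$, and straightforward manipulations of $q$-shifted factorials, these products collapse to the prefactor $\frac{(b_1c_1,\dots,b_Bc_1,a_1/c_1,\dots,a_A/c_1;q)_\infty}{(q,d_1c_1,\dots,d_Dc_1,c_2/c_1,\dots,c_C/c_1;q)_\infty}$ times the $m$-th summand of the series in \eqref{eq-integral}; the factor $\big(-c_1q^{(m+1)/2}\big)^{m(C-A)}(a_1\cdots a_A/c_1\cdots c_C)^m$ is precisely the residual monomial left after the reflections, its power $m(C-A)$ reflecting the degree mismatch between the $a$'s and the $c$'s. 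Summing over $m\ge 0$ produces the first series, and the poles at $z=c_k^{-1}q^{-m}$ with $k\ge 2$ produce the remaining $C-1$ series by the manifest symmetry of $P$ under permuting $c_1,\dots,c_C$, i.e.\ the terms written ``$\mathrm{idem}\ (c_1;c_2,\dots,c_C)$''. The overall minus sign from the deformation is exactly cancelled by the minus sign carried by each residue, so the signs match \eqref{eq-integral}. (Once this bookkeeping is set up, the identity is precisely \cite[Eq.\ (4.10.6)]{GR-book}, which may also be invoked directly.)
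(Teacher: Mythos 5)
Your proposal is correct in substance, and it is worth noting that the paper itself offers no proof of this lemma: it is quoted verbatim from Gasper and Rahman \cite[Eq.\ (4.10.6)]{GR-book}, so the only comparison to be made is with the standard derivation there, which is exactly the contour-deformation-plus-residues argument you outline (push the contour to large circles, show the circular integrals vanish under $C>A$ or $C=A$ with \eqref{cond}, and sum the residues at $z=c_k^{-1}q^{-m}$, whose simple-pole evaluation via $(q^{-m};q)_m=(-1)^mq^{-\binom{m+1}{2}}(q;q)_m$ and $(xq^{-m};q)_m=(q/x;q)_m(-x)^mq^{-\binom{m+1}{2}}$ produces precisely the prefactor, the series, and the monomial $(-c_1q^{(n+1)/2})^{n(C-A)}(a_1\cdots a_A/c_1\cdots c_C)^n$, with the idem terms coming from the other pole families). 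Two small points should be tightened. First, since the measure is $dz/(2\pi i z)$, the trivial estimate bounds the circular integral by $\sup_{|z|=R_j}|P(z)|$ (the contour length $2\pi R_j$ cancels the $1/|z|$), so the condition you must verify is $\sup_{|z|=R_j}|P|\to 0$, not $\sup|P|/R_j\to 0$ as you wrote; fortunately your asymptotics do give $|P|\asymp\exp\bigl((A-C)(\log R_j)^2/(2\log(1/|q|))\bigr)$ for $C>A$ and $|P|\asymp R_j^{\lambda}$ with $\lambda=\log|a_1\cdots a_A/(c_1\cdots c_C)|/\log(1/|q|)<0$ for $C=A$, so the correct condition does hold. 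Second, the specific radii $R_j\asymp q^{-j-1/2}\max_k|c_k|^{-1}$ can land on poles in degenerate configurations (e.g.\ when some ratio $|c_k/c_{k'}|$ is a half-odd power of $|q|$); since the pole moduli form finitely many geometric progressions of ratio $|q|^{-1}$, one should simply say that radii with uniformly bounded relative distance to all poles can always be chosen, rather than commit to that formula. With these repairs your argument is a complete proof of the lemma, equivalent to invoking \cite[Eq.\ (4.10.6)]{GR-book} directly as the paper does.
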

This lemma was used in the author's work \cite{Wang} to prove a conjecture of Andrews and Uncu \cite{Andrews-Uncu}.
With the help of the above lemma, we reduce Example 10 to the single-sum identities \eqref{Slater21}--\eqref{eq-MSP} and \eqref{Slater21-dual}--\eqref{eq-MSP-dual}.
\begin{rem}
The integral method was used by Rosengren \cite{Rosengren} in proving some conjectures of S.\ Kanade and M.C.\ Russell \cite{KR-2015,KR-2019}. Later S.\ Chern \cite{Chern} and the author \cite{Wang} used integral method to prove a conjecture of Andrews and A.\ Uncu \cite{Andrews-Uncu}. This method was also applied by J.\ Mc Laughlin  \cite{Laughlin}, and Cao and the author \cite{Cao-Wang} to establish new identities. It also plays an important role in the author's work \cite{Wang2022} in verifying some of Zagier's rank two examples.
\end{rem}

For Example 11 we will need the theory of Bailey pairs. A pair of sequences $\left(\{\alpha_n(x,q)\}_{n=0}^\infty, \{\beta_n(x,q)\}_{n=0}^\infty\right)$ is called a Bailey pair relative to $x$ if
\begin{align}
\beta_n(x,q)=\sum_{r=0}^n \frac{\alpha_r(x,q)}{(q;q)_{n-r}(xq;q)_{n+r}}. \label{Bailey-defn}
\end{align}
Bailey's lemma \cite{Andrews1986} implies many useful transformation formulas, among which we need the following:
\begin{align}
&\sum_{n=0}^\infty x^nq^{n^2}(-q;q^2)_n\beta_n(x,q^2)=\frac{(-xq;q^2)_\infty}{(xq^2;q^2)_\infty} \sum_{r=0}^\infty \frac{x^rq^{r^2}(-q;q^2)_r}{(-xq;q^2)_r}\alpha_r(x,q^2), \label{Bailey-1} \\
&\sum_{n=0}^\infty q^{n(n+1)/2}(-1;q)_n\beta_n(1,q)=2\frac{(q^2;q^2)_\infty}{(q;q)_\infty^2}\sum_{r=0}^\infty \frac{q^{r(r+1)/2}}{1+q^r}\alpha_r(1,q), \label{Bailey-2} \\
&\frac{1}{1-q}\sum_{n=0}^\infty q^{n(n+1)/2}(-q;q)_n\beta_n(q,q)=\frac{(q^2;q^2)_\infty}{(q;q)_\infty^2}\sum_{r=0}^\infty q^{r(r+1)/2}\alpha_r(q,q). \label{Bailey-3}
\end{align}
See \cite[(1.2.9) and (S2BL)]{MSP} for these transformation formulas.

Finally, we will use without details a method of F.\ Garvan and J.\ Liang \cite{Garvan-Liang}. This method is based on the theory of modular forms. It allows us to use some Maple algorithms to automatically verify theta function identities.

\section{New sum-product identities with double sums}\label{sec-new}
We will reduce Examples 8, 11 and 12 to some new identities involving double sums.  In this section, we present and prove these identities.

The following theorem is prepared for Example 8.
\begin{theorem}\label{thm-Wang}
We have
\begin{align}
\sum_{i,j\geq 0} \frac{q^{4i^2+4ij+2j^2}}{(q;q)_{2i}(q^2;q^2)_j} &=\frac{(-q^5,-q^7,q^{12};q^{12})_\infty}{(q^2;q^2)_\infty}, \label{eq-Wang-1} \\
\sum_{i,j\geq 0} \frac{q^{4i^2+4ij+2j^2+4i+2j}}{(q;q)_{2i+1}(q^2;q^2)_j}&=\frac{(-q,-q^{11},q^{12};q^{12})_\infty}{(q^2;q^2)_\infty}. \label{eq-Wang-2}
\end{align}
\end{theorem}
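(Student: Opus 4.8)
\emph{Plan of proof.} Write $L_{1}(q)$ and $L_{2}(q)$ for the two left-hand sides in Theorem~\ref{thm-Wang}. The crucial first observation is the pair of quadratic-form identities
\[
4i^{2}+4ij+2j^{2}=(2i)^{2}+2(2i)j+2j^{2},\qquad 4i^{2}+4ij+2j^{2}+4i+2j=(2i+1)^{2}+2(2i+1)j+2j^{2}-1 .
\]
Putting $n=2i$ in $L_{1}(q)$ and $n=2i+1$ in $L_{2}(q)$, we see that $L_{1}(q)$ is the even-$n$ part, and $q\,L_{2}(q)$ the odd-$n$ part, of the single double series
\[
\Phi(q):=\sum_{n,j\geq 0}\frac{q^{n^{2}+2nj+2j^{2}}}{(q;q)_{n}(q^{2};q^{2})_{j}} .
\]
Since $\Phi(q)$ converges absolutely for $|q|<1$, these parts are isolated by the elementary projector $\tfrac12(1\pm(-1)^{n})$, giving
\[
L_{1}(q)=\tfrac12\bigl(\Phi(q)+\Psi(q)\bigr),\qquad L_{2}(q)=\tfrac1{2q}\bigl(\Phi(q)-\Psi(q)\bigr),\qquad \Psi(q):=\sum_{n,j\geq 0}\frac{(-1)^{n}q^{n^{2}+2nj+2j^{2}}}{(q;q)_{n}(q^{2};q^{2})_{j}} .
\]

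Both ingredients are already available in the literature. The series $\Phi(q)$ is precisely the $(k,s)=(3,3)$ instance of Bressoud's identity recorded in \eqref{Bressoud-cor-1}, so $\Phi(q)=(q^{3};q^{3})_{\infty}^{2}\big/\bigl((q;q)_{\infty}(q^{6};q^{6})_{\infty}\bigr)$; and $\Psi(q)$ is the $u=1$ specialization of the Cao--Wang identity \eqref{eq-Cao-Wang328}, for which the weight $(-1)^{i}u^{i+j}$ collapses to $(-1)^{i}$, so $\Psi(q)=(q;q^{2})_{\infty}$. Hence Theorem~\ref{thm-Wang} is equivalent to the pair of theta-function identities
\begin{align*}
\frac{(q^{3};q^{3})_{\infty}^{2}}{(q;q)_{\infty}(q^{6};q^{6})_{\infty}}+(q;q^{2})_{\infty}&=\frac{2\,(-q^{5},-q^{7},q^{12};q^{12})_{\infty}}{(q^{2};q^{2})_{\infty}},\\
\frac{(q^{3};q^{3})_{\infty}^{2}}{(q;q)_{\infty}(q^{6};q^{6})_{\infty}}-(q;q^{2})_{\infty}&=\frac{2q\,(-q,-q^{11},q^{12};q^{12})_{\infty}}{(q^{2};q^{2})_{\infty}}.
\end{align*}

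To finish, I would prove these two identities directly. Adding them and using the Jacobi triple product \eqref{Jacobi}, the right-hand side becomes $\sum_{n}q^{6n^{2}-n}+\sum_{n}q^{6n^{2}+5n+1}$; completing the square merges these two progressions into the single theta series $\sum_{m}q^{m(3m-1)/2}=(-q,-q^{2},q^{3};q^{3})_{\infty}$, and the resulting identity $(q^{3};q^{3})_{\infty}^{2}(q^{2};q^{2})_{\infty}=(q;q)_{\infty}(q^{6};q^{6})_{\infty}(-q,-q^{2},q^{3};q^{3})_{\infty}$ then follows by routine regrouping of infinite products (split $(-q;q)_{\infty}$ and $(q^{3};q^{3})_{\infty}$ according to residues modulo $3$ and use $(q;q)_{\infty}(-q;q)_{\infty}=(q^{2};q^{2})_{\infty}$). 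Subtracting the two identities instead, and again invoking \eqref{Jacobi}, reduces everything to $(q;q)_{\infty}=\sum_{n}q^{6n^{2}-n}-\sum_{n}q^{6n^{2}+5n+1}$, which is nothing but the splitting of Euler's pentagonal number theorem by the parity of the summation index. Alternatively, since every series occurring here is a modular form of explicitly computable weight and level, the two theta identities can be certified by the Garvan--Liang procedure mentioned in Section~\ref{sec-pre}.

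Everything preceding the last step is essentially formal: the quadratic-form identities, the even/odd projection, and the recognition of Bressoud's and Cao--Wang's identities. The only place demanding genuine care is the verification of the two modulus-$12$ theta identities, where one has to track signs and shifts through the square completion and the product rearrangements (or, equivalently, set up the modular-forms certification correctly). That is the main obstacle.
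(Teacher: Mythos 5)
Your proposal is correct and follows essentially the same route as the paper: split the double series by the parity of the first summation index, identify the even/odd projections via Bressoud's identity \eqref{Bressoud-cor-1} and the $u=1$ case of \eqref{eq-Cao-Wang328}, and then verify the resulting modulus-$12$ theta identities. The only difference is cosmetic: where the paper invokes the Garvan--Liang procedure (with a remark sketching the Jacobi-triple-product alternative), you carry out that final theta verification explicitly, and your completion-of-square and pentagonal-number computations are accurate.
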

\begin{proof}
Setting $u=1$ in \eqref{eq-Cao-Wang328}, we obtain
\begin{align}
\sum_{i,j\geq 0}\frac{(-1)^iq^{i^2+2ij+2j^2}}{(q;q)_i(q^2;q^2)_j}=\frac{(q;q)_\infty}{(q^2;q^2)_\infty}. \label{Wang-start-2}
\end{align}
Adding \eqref{Bressoud-cor-1} and \eqref{Wang-start-2} together, we deduce that
\begin{align}\label{Wang-proof-1}
\sum_{i,j\geq 0} \frac{q^{4i^2+4ij+2j^2}}{(q;q)_{2i}(q^2;q^2)_j} =\frac{1}{2} \left(\frac{(q^3;q^3)_\infty^2}{(q;q)_\infty (q^6;q^6)_\infty}+ \frac{(q;q)_\infty}{(q^2;q^2)_\infty} \right).
\end{align}
Using the method in \cite{Garvan-Liang}, it is easy to verify that \eqref{eq-Wang-1} holds.

Similarly, subtracting \eqref{Wang-start-2} from \eqref{Bressoud-cor-1}, and using the method in \cite{Garvan-Liang}, it is easy to verify that \eqref{eq-Wang-2} holds.
\end{proof}
\begin{rem}
For the last step, it is not compulsory to use the method in \cite{Garvan-Liang}. We can also finish the proof by $q$-series approaches. For example, after arriving at \eqref{Wang-proof-1}, we can multiply it by $(q^2;q^2)_\infty$. Then the right side will become a nice theta series which is easily shown to be equal to $(-q^5,-q^7,q^{12};q^{12})_\infty$ using \eqref{Jacobi}. However, since such proofs are not always as simple as here, and to save space, we will not discuss them.
\end{rem}

When studying Zagier's Example 11, we encounter Bressoud polynomials:
\begin{align}\label{defn-Bressoud-poly}
B_n^{(1)}(q):=\sum_{k=0}^n q^{k^2}{n \brack k} , \quad B_n^{(2)}(q):=\sum_{k=0}^n q^{k^2+k}{n \brack k} .
\end{align}
We recall the following identities on them, which were implicitly stated as an problem by Andrews \cite{AndrewsSIAM} and later proved by Bressoud \cite{Bressoud}:
\begin{align}
\sum_{k=0}^n q^{k^2} {n \brack k}&=\frac{(q;q)_n}{(q;q)_{2n}}\sum_{k=-\infty}^\infty (-1)^k q^{k(5k+1)/2} {2n \brack n+k}, \label{Bressoud-1} \\
\sum_{k=0}^n q^{k^2+k} {n\brack k}&=\frac{(q;q)_n}{(q;q)_{2n+1}} \sum_{k=-\infty}^\infty (-1)^k q^{k(5k+3)/2} {2n+1 \brack n+k+1}. \label{Bressoud-2}
\end{align}
Along our proofs, we find that \eqref{Bressoud-2} can be written in the following form, which looks closer to \eqref{Bressoud-1}.
\begin{theorem}\label{thm-Bressoud-new}
We have
\begin{align}
\sum_{k=0}^n q^{k^2+k} {n\brack k}&=\frac{(q;q)_n}{(q;q)_{2n}} \sum_{k=-\infty}^\infty (-1)^k q^{k(5k+3)/2}{2n \brack n+k}. \label{new-Bressoud}
\end{align}
\end{theorem}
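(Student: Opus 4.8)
The plan is to deduce \eqref{new-Bressoud} directly from Bressoud's identity \eqref{Bressoud-2}. Since the left-hand sides of \eqref{Bressoud-2} and \eqref{new-Bressoud} are literally the same sum $\sum_{k=0}^n q^{k^2+k}{n\brack k}$, and the Gaussian coefficients vanish outside their support so all sums may be taken over $k\in\mathbb{Z}$, it suffices to show
$$\frac{1}{(q;q)_{2n+1}}\sum_{k\in\mathbb{Z}}(-1)^kq^{k(5k+3)/2}{2n+1\brack n+k+1}=\frac{1}{(q;q)_{2n}}\sum_{k\in\mathbb{Z}}(-1)^kq^{k(5k+3)/2}{2n\brack n+k}.$$
Using $(q;q)_{2n+1}=(1-q^{2n+1})(q;q)_{2n}$ and subtracting, this is equivalent to
$$\sum_{k\in\mathbb{Z}}(-1)^kq^{k(5k+3)/2}\left(\frac{1}{1-q^{2n+1}}{2n+1\brack n+k+1}-{2n\brack n+k}\right)=0.$$

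The heart of the argument is an elementary identity for the bracketed quantity. Expanding the Gaussian coefficients as ratios of $q$-Pochhammer symbols, for $-n-1\le k\le n$ one gets
$$\frac{1}{1-q^{2n+1}}{2n+1\brack n+k+1}-{2n\brack n+k}=\frac{(q;q)_{2n}}{(q;q)_{n-k}}\left(\frac{1}{(q;q)_{n+k+1}}-\frac{1}{(q;q)_{n+k}}\right)=\frac{q^{n+k+1}(q;q)_{2n}}{(q;q)_{n-k}(q;q)_{n+k+1}},$$
while for $k\notin[-n-1,n]$ both Gaussian coefficients vanish so the bracket is $0$. Substituting this, pulling out the factor $q^{n+1}(q;q)_{2n}$, and simplifying the exponent via $q^{k(5k+3)/2}\cdot q^{k}=q^{5k(k+1)/2}$, the claim reduces to
$$\sum_{k=-n-1}^{n}\frac{(-1)^kq^{5k(k+1)/2}}{(q;q)_{n-k}(q;q)_{n+k+1}}=0.$$
The involution $k\mapsto-k-1$ maps $\{-n-1,\dots,n\}$ onto itself, fixes the exponent $5k(k+1)/2$, interchanges $(q;q)_{n-k}$ and $(q;q)_{n+k+1}$, and changes the sign of $(-1)^k$; hence the last sum is antisymmetric, so it vanishes, and \eqref{new-Bressoud} follows.

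I do not anticipate a serious obstacle, since the computation above is short. The only points requiring attention are verifying that the closed form $q^{n+k+1}(q;q)_{2n}/\big((q;q)_{n-k}(q;q)_{n+k+1}\big)$ for the bracket remains correct at the two boundary indices $k=-n-1$ and $k=n$, where one of the Gaussian coefficients has already degenerated (a one-line check: at $k=-n-1$ both expressions equal $1/(1-q^{2n+1})$, and at $k=n$ both equal $q^{2n+1}/(1-q^{2n+1})$), and confirming that $k\mapsto-k-1$ genuinely permutes the finite index set. If one preferred to avoid this boundary bookkeeping, one could instead adapt Bressoud's own recurrence argument for \eqref{Bressoud-2}, or run an induction on $n$ using the Pascal recurrence for the Gaussian coefficients together with \eqref{Bressoud-1}--\eqref{Bressoud-2}; but the reduction just described is the most transparent.
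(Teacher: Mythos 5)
Your argument is correct and is essentially the paper's own proof of the equivalence of \eqref{Bressoud-2} and \eqref{new-Bressoud}: the difference of the two right-hand sides reduces, exactly as in the paper (which uses the $q$-Pascal-type rearrangement \eqref{eq-binomial-relation} where you expand the Gaussian coefficients into Pochhammer symbols directly), to the vanishing of $\sum_{k}(-1)^k q^{5k(k+1)/2}{2n+1\brack n+k+1}$ via the same fixed-point-free involution $k\mapsto -k-1$. The paper also records a second, independent proof by matching $q$-Zeilberger recurrences that does not assume \eqref{Bressoud-2}, but your derivation from the known identity \eqref{Bressoud-2} coincides with its first argument.
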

\begin{proof}[Proof of the equivalence of \eqref{Bressoud-2} and \eqref{new-Bressoud}]
Note that
\begin{align}
{2n+1 \brack n+k+1} =(1-q^{2n+1}) {2n \brack n+k} +q^{n+k+1} {2n+1 \brack n+k+1}. \label{eq-binomial-relation}
\end{align}
Thus the right side of \eqref{Bressoud-2} can be rewritten as
\begin{align}
&\frac{(q;q)_n}{(q;q)_{2n+1}} \sum_{k=-\infty}^\infty (-1)^k q^{k(5k+3)/2} {2n+1 \brack n+k+1}\nonumber \\
&=\frac{(q;q)_n}{(q;q)_{2n}} \sum_{k=-\infty}^\infty (-1)^k q^{k(5k+3)/2} {2n \brack n+k} \nonumber \\
 &\quad +q^{n+1} \frac{(q;q)_n}{(q;q)_{2n+1}} \sum_{k=-\infty}^\infty {2n+1 \brack n+k+1}(-1)^kq^{5k(k+1)/2}. \label{equivalence}
\end{align}
Replacing $k$ by $-k-1$ in the second sum in the right side, we deduce that
\begin{align}
 \sum_{k=-\infty}^\infty {2n+1 \brack n+k+1} (-1)^kq^{5k(k+1)/2}&= \sum_{k=-\infty}^\infty {2n+1 \brack n-k} (-1)^{k+1} q^{5k(k+1)/2} \nonumber \\
 &=- \sum_{k=-\infty}^\infty {2n+1 \brack n+k+1} (-1)^kq^{5k(k+1)/2}.
\end{align}
Hence the second sum in the right side of \eqref{equivalence}  vanishes, and we get the desired equivalence.
\end{proof}
\begin{rem}
S.O.\ Warnaar told us that Theorem \ref{thm-Bressoud-new} was a special case of the following identity (see the third identity on page 681 of the work of Andrews, A.  Schilling, and Warnaar \cite{ASW}):
\begin{align}\label{eq-ASW}
\sum_{r} \frac{(-1)^r q^{((2k-2i+3)(r+1)-2)r/2}}{(q;q)_{n-r}(q;q)_{n+r}} =\sum_{n_1,\dots,n_{k-i}} \frac{q^{n_1^2+\cdots +n_{k-i}^2+n_1+\cdots+n_{k-i}}}{(q;q)_{n-n_1}\cdots (q)_{n_{k-i-1}-n_{k-i}}(q)_{n_{k-i}}}.
\end{align}
If we set $k=2$ and $i=1$ in \eqref{eq-ASW}, then we get Theorem \ref{thm-Bressoud-new}. In a sentence following this identity, they \cite{ASW} also commented that ``and we are back to an equation which implies a Bailey pair relative to $a=1$''. Specifically, this Bailey pair was
\begin{align}
\alpha_n(1;q)&=(-1)^nq^{kn^2+\binom{n}{2}+(k-i+1)n}(1+q^{(2i-2k-1)n}), \label{pair-ASW-alpha} \\
\beta_n(1;q)&=\sum_{n\geq n_1\geq \cdots \geq n_{k-1}\geq 0} \frac{q^{n_1^2+\cdots+n_{k-1}^2+n_i+\cdots +n_{k-1}}}{(q)_{n-n_1}(q)_{n_1-n_2}\cdots (q)_{n_{k-2}-n_{k-1}}(q)_{n_{k-1}}}.
\end{align}
See also Warnaar's survey \cite[p.\ 339]{Warnaar} for this Bailey pair. The Bailey pair \eqref{pair-alpha-2} and \eqref{pair-beta-2} we will use below corresponds to $(k,i)=(2,1)$ of this general one.
\end{rem}

Without assuming \eqref{Bressoud-2}, we can also prove \eqref{new-Bressoud} directly in the following way.
\begin{proof}[Proof of Theorem \ref{thm-Bressoud-new}]
We denote
\begin{align}
L_n(q)&:=\frac{1}{(q;q)_n}\sum_{k=0}^\infty q^{k^2+k} {n\brack k}, \label{Ln-defn} \\
R_n(q)&:=\frac{1}{(q;q)_{2n}}\sum_{k=-\infty}^\infty (-1)^k q^{k(5k+3)/2}{2n \brack n+k}. \label{Rn-defn}
\end{align}
It is easy to find that
\begin{align}\label{initial}
L_0(q)=R_0(q)=1, \quad L_1(q)=R_1(q)=\frac{1+q^2}{1-q}.
\end{align}
Using the Mathematica package $q$-Zeil, we find that
\begin{align}
L_n(q)=\frac{1+q-q^n+q^{2n}}{1-q^n}L_{n-1}(q)-\frac{q}{1-q^n}L_{n-2}(q). \label{L-rec}
\end{align}
Note that
\begin{align}
R_n(q)=\frac{1}{2(q;q)_{2n}}\sum_{k=-\infty}^\infty (-1)^kq^{k(5k-3)/2}(1+q^{3k}){2n \brack n+k}. \label{Rn-new}
\end{align}
Using this expression and the Mathematica package $q$-Zeil, we find that
\begin{align}
R_n(q)=\frac{1+q-q^n+q^{2n}}{1-q^n}R_{n-1}(q)-\frac{q}{1-q^n}R_{n-2}(q). \label{L-rec}
\end{align}
Thus $L_n(q)$ and $R_n(q)$ satisfy the same recurrence relation. In view of the initial values in \eqref{initial}, we see that for any $n\geq 0$, we always have $L_n(q)=R_n(q)$.
\end{proof}
\begin{rem}
If we apply the $q$-Zeil package directly to $R_n(q)$ using its definition \eqref{Rn-defn}, we will get a fifth order recurrence relation. The symmetric version given in \eqref{Rn-new} is motivated by P.\ Paule' proof of \eqref{Bressoud-1} and \eqref{Bressoud-2} in \cite{Paule1994}.
\end{rem}

We will reduce Example 11 to the following identities involving Bressoud polynomials. These identities appear to be new. The identities \eqref{Bressoud-1} and \eqref{Bressoud-2} will be helpful in proving them except \eqref{eq-thm-B3}, for which we need to use \eqref{new-Bressoud} instead.
\begin{theorem}\label{thm-Bressoud}
We have
\begin{align}
\sum_{n=0}^\infty \frac{q^{n(n+1)/2}(-1;q)_n}{(q;q)_n} \sum_{i=0}^n q^{i^2} {n \brack i} &=\frac{(q^2;q^2)_\infty (q^3;q^3)_\infty^2}{(q;q)_\infty^2 (q^6;q^6)_\infty}, \label{eq-thm-B2} \\
\sum_{n=0}^\infty \frac{q^{n(n+1)/2}(-q;q)_n}{(q;q)_n} \sum_{i=0}^n  q^{i^2+i}{n \brack i} &=\frac{(q^6;q^6)_\infty^2}{(q;q)_\infty (q^3;q^3)_\infty}, \label{eq-thm-B4} \\
\sum_{n=0}^\infty \frac{q^{n^2}(-q;q^2)_n}{(q^2;q^2)_n} \sum_{i=0}^n q^{2i^2} {n\brack i}_{q^2} &=\frac{(-q;q^2)_\infty (q^5,q^7,q^{12};q^{12})_\infty}{(q^2;q^2)_\infty}, \label{eq-thm-B1} \\
\sum_{n=0}^\infty \frac{q^{n^2}(-q;q^2)_n}{(q^2;q^2)_n} \sum_{i=0}^n q^{2i^2+2i} {n\brack i}_{q^2} &=\frac{(-q;q^2)_\infty (q^3,q^9,q^{12};q^{12})_\infty}{(q^2;q^2)_\infty}, \label{eq-thm-B3} \\
\sum_{n=0}^\infty \frac{q^{n^2+2n}(-q;q^2)_n}{(q^2;q^2)_n} \sum_{i=0}^n q^{2i^2+2i} {n\brack i}_{q^2} &=\frac{(-q;q^2)_\infty (q^1,q^{11},q^{12};q^{12})_\infty}{(q^2;q^2)_\infty}. \label{eq-thm-B5}
\end{align}
\end{theorem}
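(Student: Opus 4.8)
The plan is to prove all five identities by one uniform device. In each case the inner sum is a Bressoud polynomial $B_n^{(1)}(Q)$ or $B_n^{(2)}(Q)$ of \eqref{defn-Bressoud-poly} with $Q=q$ or $Q=q^2$. First I would rewrite it by a Bressoud-type identity — \eqref{Bressoud-1}, \eqref{Bressoud-2}, or \eqref{new-Bressoud}, with $q$ replaced by $Q$ — which turns it into a bilateral sum of $Q$-binomial coefficients; dividing by the relevant $(Q;Q)$-factorial and folding the negative half of the bilateral sum onto the positive half then exhibits it as the $\beta$-sequence of an explicit Bailey pair. After interchanging the order of summation, exactly one of the Bailey-lemma transformations \eqref{Bailey-1}, \eqref{Bailey-2}, \eqref{Bailey-3} converts the left-hand side into a single sum over $r$; splitting off the $r=0$ term and re-bilateralizing it (via $k\mapsto-k-1$ in the $B^{(2)}$ cases) produces a theta series $\sum_{r\in\mathbb{Z}}(-1)^rQ^{ar^2+br}$, which \eqref{Jacobi} evaluates as a triple product, and a one-line identity among eta-quotients matches the stated right-hand side.

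The matching of data would be as follows. For \eqref{eq-thm-B2} I would use \eqref{Bressoud-1} and \eqref{Bailey-2}: here $\beta_n(1,q)=\frac{B_n^{(1)}(q)}{(q;q)_n}$ belongs to the Bailey pair relative to $x=1$ with $\alpha_0=1$ and $\alpha_r=(-1)^rq^{r(5r-1)/2}(1+q^r)$ for $r\ge1$, so (using $\alpha_r/(1+q^r)=(-1)^rq^{r(5r-1)/2}$ and $\alpha_0/(1+q^0)=\tfrac12$) the $r$-sum collapses to $\tfrac12\sum_{r\in\mathbb{Z}}(-1)^rq^{3r^2}=\tfrac12(q^3;q^6)_\infty^2(q^6;q^6)_\infty$, from which \eqref{eq-thm-B2} drops out. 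Identity \eqref{eq-thm-B1} is the verbatim $q\mapsto q^2$, $x=1$ analogue using \eqref{Bailey-1} in place of \eqref{Bailey-2} (with $\frac{(-q;q^2)_r}{(-q;q^2)_r}=1$ keeping the $r$-sum clean), landing on $\sum_{r\in\mathbb{Z}}(-1)^rq^{6r^2+r}=(q^5,q^7,q^{12};q^{12})_\infty$. For \eqref{eq-thm-B4} I would use \eqref{Bressoud-2} and \eqref{Bailey-3}: folding by $k\mapsto-k-1$ presents $\frac{B_n^{(2)}(q)}{(q;q)_n}$ as a $\beta$-sequence relative to $x=q$ with $\alpha_r$ proportional to $(-1)^rq^{r(5r+3)/2}(1-q^{2r+1})$, and \eqref{Bailey-3} gives $\sum_{r\in\mathbb{Z}}(-1)^rq^{3r^2+2r}=(q,q^5,q^6;q^6)_\infty$. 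For \eqref{eq-thm-B5} I would use \eqref{Bressoud-2} with $q\mapsto q^2$ and \eqref{Bailey-1} with $x=q^2$: the awkward factor $\frac{(-q;q^2)_r}{(-q^3;q^2)_r}=\frac{1+q}{1+q^{2r+1}}$ cancels against the factor $(1+q^{2r+1})$ hidden in $\alpha_r\propto(-1)^rq^{r(5r+3)}(1-q^{4r+2})$, the common factor $1-q^2$ on both sides cancels, and the $r$-sum becomes $\sum_{r\in\mathbb{Z}}(-1)^rq^{6r^2+5r}=(q,q^{11},q^{12};q^{12})_\infty$. The remaining \eqref{eq-thm-B3} is the one case where \eqref{Bressoud-2} is insufficient: applied with $q\mapsto q^2$ it again yields a Bailey pair relative to $x=q^2$, whose natural partner in \eqref{Bailey-1} carries the weight $q^{n^2+2n}(-q;q^2)_n$ of \eqref{eq-thm-B5}, not the weight $q^{n^2}(-q;q^2)_n$ appearing in \eqref{eq-thm-B3}. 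This is precisely the role of Theorem~\ref{thm-Bressoud-new}: \eqref{new-Bressoud} with $q\mapsto q^2$ rewrites $\frac{B_n^{(2)}(q^2)}{(q^2;q^2)_n}$ in the \emph{symmetric} form $\sum_k\frac{(-1)^kq^{k(5k+3)}}{(q^2;q^2)_{n-k}(q^2;q^2)_{n+k}}$, i.e.\ as $\beta_n(1,q^2)$ for the Bailey pair relative to $x=1$ with base $q^2$ (the $(k,i)=(2,1)$ pair of the Remark, dilated), so that \eqref{Bailey-1} with $x=1$ applies directly and yields $\sum_{r\in\mathbb{Z}}(-1)^rq^{6r^2+3r}=(q^3,q^9,q^{12};q^{12})_\infty$. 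In each of the five cases \eqref{Jacobi} turns the theta series into the advertised product, and a short manipulation using only $(Q;Q)_\infty=(Q;Q^2)_\infty(Q^2;Q^2)_\infty$-type factorizations closes the proof.

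The only conceptual point will be the realization that \eqref{eq-thm-B3} cannot be reached from \eqref{Bressoud-2} and must instead be routed through \eqref{new-Bressoud} — which is exactly why Theorem~\ref{thm-Bressoud-new} is established first; everything else is bookkeeping. The likeliest places to slip will be (i) the $r=0$ contribution in \eqref{Bailey-2}, (ii) performing the reindexing $k\mapsto-k-1$ correctly when \eqref{Bressoud-2} or \eqref{new-Bressoud} is applied to a $B^{(2)}$, so as to land on the intended Bailey pair, and (iii) the final eta-quotient identities, for which one must anticipate from the shape of the right-hand sides whether the modulus-$6$ or modulus-$12$ triple product will appear, so that the theta exponent $ar^2+br$ and the parameters in \eqref{Jacobi} line up.
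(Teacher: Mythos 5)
Your proposal is correct and follows essentially the same route as the paper's own proof: the same three Bailey pairs (coming from \eqref{Bressoud-1}, from \eqref{Bressoud-2} after the $k\mapsto -k-1$ folding, and from Theorem \ref{thm-Bressoud-new}, the last being exactly what is needed for \eqref{eq-thm-B3}) are inserted into the transformations \eqref{Bailey-1}--\eqref{Bailey-3}, and the resulting theta series are evaluated by \eqref{Jacobi}. The only cosmetic differences are harmless sign conventions ($r\mapsto -r$) in the final bilateral theta sums, e.g.\ $q^{6r^2+r}$ versus the paper's $q^{6r^2-r}$.
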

\begin{proof}
Note that \eqref{Bressoud-1} implies that $\left(\{\alpha_r(1,q)\}_{r=0}^\infty, \{\beta_r(1,q)\}_{r=0}^\infty\right)$ is a Bailey pair relative to $x=1$, where
\begin{align}
\alpha_r(1,q)&=\left\{\begin{array}{ll}
1, & r=0 \\
(-1)^r q^{r(5r-1)/2}(1+q^r), & r\geq 1,
\end{array}\right.  \label{pair-alpha-1}\\
\beta_r(1,q)&=\frac{1}{(q;q)_r}\sum_{i=0}^r q^{i^2} {r \brack i} . \label{pair-beta-1}
\end{align}
Substituting this Bailey pair into \eqref{Bailey-2}, we deduce that
\begin{align}
\sum_{n=0}^\infty \frac{q^{n(n+1)/2}(-1;q)_n}{(q;q)_n}\sum_{i=0}^n q^{i^2}{n\brack i}&=\frac{(q^2;q^2)_\infty}{(q;q)_\infty^2}\sum_{r=-\infty}^\infty (-1)^r q^{3r^2}.
\end{align}
Now by \eqref{Jacobi} we prove \eqref{eq-thm-B2}.

Substituting this pair with $q$ replaced by $q^2$ into \eqref{Bailey-1}, we deduce that
\begin{align}
\sum_{n=0}^\infty \frac{q^{n^2}(-q;q^2)_n}{(q^2;q^2)_n} \sum_{i=0}^n q^{2i^2}{n\brack i}_{q^2} &=\frac{(-q;q^2)_\infty}{(q^2;q^2)_\infty} \sum_{r=-\infty}^\infty (-1)^r q^{6r^2-r}.
\end{align}
Now by \eqref{Jacobi} we prove \eqref{eq-thm-B1}.

Note that \eqref{Bressoud-2} implies that $\left(\{\alpha_r(q,q)\}_{r=0}^\infty, \{\beta_r(q,q)\}_{r=0}^\infty\right)$ is a Bailey pair relative to $x=q$, where
\begin{align}
\alpha_r(q,q)&=(-1)^r \frac{q^{r(5r+3)/2}(1-q^{2r+1})}{1-q}, \label{pair-alpha-3}\\
\beta_r(q,q)&=\frac{1}{(q;q)_r}\sum_{i=0}^r  q^{i^2+i}{r \brack i} . \label{pair-beta-3}
\end{align}
Substituting this Bailey pair into \eqref{Bailey-3}, we deduce that
\begin{align}
\sum_{n=0}^\infty \frac{q^{n(n+1)/2}(-q;q)_n}{(q;q)_n} \sum_{i=0}^n q^{i^2+i}{n\brack i} &=\frac{(q^2;q^2)_\infty}{(q;q)_\infty^2} \sum_{r=-\infty}^\infty (-1)^r q^{3r^2+2r}.
\end{align}
Now by \eqref{Jacobi} we prove \eqref{eq-thm-B4}.

Substituting this Bailey pair  with $q$ replaced by $q^2$ into \eqref{Bailey-1}, we deduce that
\begin{align}
&\sum_{n=0}^\infty \frac{q^{n^2+2n}(-q;q^2)_n}{(q^2;q^2)_n} \sum_{i=0}^n q^{2i^2+2i} {n\brack i}_{q^2} \nonumber \\ &=\frac{(-q^3;q^2)_\infty}{(q^4;q^2)_\infty} \sum_{r=0}^\infty \frac{q^{r^2+2r}(-q;q^2)_r}{(-q^3;q^2)_r} \cdot \frac{(-1)^r q^{r(5r+3)}(1-q^{4r+2})}{1-q^2} \nonumber \\
&=\frac{(-q;q^2)_\infty}{(q^2;q^2)_\infty} \sum_{r=-\infty}^\infty (-1)^r q^{6r^2+5r}.
\end{align}
Now by \eqref{Jacobi} we prove \eqref{eq-thm-B5}.

Finally, note that Theorem \ref{thm-Bressoud-new} implies that $\left(\{\alpha_r(1,q)\}_{r=0}^\infty, \{\beta_r(1,q)\}_{r=0}^\infty\right)$ is a Bailey pair relative to $x=1$, where
\begin{align}
\alpha_r(1,q)&=\left\{\begin{array}{ll}
1, & r=0 \\
(-1)^r q^{r(5r-3)/2}(1+q^{3r}), & r\geq 1,
\end{array}\right.  \label{pair-alpha-2}\\
\beta_r(1,q)&=\frac{1}{(q;q)_r}\sum_{i=0}^r  q^{i^2+i}{r \brack i}. \label{pair-beta-2}
\end{align}
Substituting this Bailey pair with $q$ replaced by $q^2$ into \eqref{Bailey-1}, we deduce that
\begin{align}
\sum_{n=0}^\infty \frac{q^{n^2}(-q;q^2)_n}{(q^2;q^2)_n} \sum_{i=0}^n q^{2i^2+2i} {n \brack i}_{q^2} &=\frac{(-q;q^2)_\infty}{(q^2;q^2)_\infty} \sum_{r=-\infty}^\infty (-1)^rq^{r(6r-3)}.
\end{align}
Now by \eqref{Jacobi} we prove \eqref{eq-thm-B3}.
\end{proof}
\begin{rem}
Andrews \cite{AndrewsPJM} used a Bailey pair related to the first Bressoud polynomial to derive some multiple Rogers-Ramanujan type identities. The pair used there \cite[(5.6)-(5.7)]{AndrewsPJM} is different from the above.
\end{rem}

For Example 12 we establish the following identities, which give double-sum representations for the products in the Rogers-Ramanujan identities.
\begin{theorem}\label{thm-12-lem}
We have
\begin{align}
\sum_{i,j\geq 0}\frac{q^{\frac{i(3i-1)}{2}+4ij+4j^2}}{(q;q)_i (q^2;q^2)_j} &=\frac{1}{(q,q^4;q^5)_\infty}, \label{eq-lem-12-1} \\
\sum_{i,j\geq 0}\frac{q^{\frac{i(3i+1)}{2}+4ij+4j^2+2j}}{(q;q)_i(q^2;q^2)_j}&=\frac{1}{(q^2,q^3;q^5)_\infty}. \label{eq-lem-12-2}
\end{align}
\end{theorem}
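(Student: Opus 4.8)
The plan is to reduce both identities directly to the Rogers--Ramanujan identities \eqref{RR-1} and \eqref{RR-2} by completing the square in the quadratic forms so that the ``mod $5$'' content is carried entirely by a single index. Using $4j^2+4ij=(i+2j)^2-i^2$, one checks the exponent identities
\begin{equation*}
\frac{i(3i-1)}{2}+4ij+4j^2=\binom{i}{2}+(i+2j)^2,\qquad \frac{i(3i+1)}{2}+4ij+4j^2+2j=\binom{i}{2}+(i+2j)+(i+2j)^2.
\end{equation*}
Setting $n=i+2j$ and collecting the terms with a fixed value of $n$, the left-hand side of \eqref{eq-lem-12-1} becomes $\sum_{n\ge 0}q^{n^2}\,\Sigma_n$ and that of \eqref{eq-lem-12-2} becomes $\sum_{n\ge 0}q^{n^2+n}\,\Sigma_n$, where
\begin{equation*}
\Sigma_n:=\sum_{\substack{i,j\ge 0\\ i+2j=n}}\frac{q^{\binom{i}{2}}}{(q;q)_i(q^2;q^2)_j}.
\end{equation*}

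The core of the argument is the finite evaluation $\Sigma_n=1/(q;q)_n$ for all $n\ge 0$, which I would prove by a generating-function computation. Multiplying by $x^n$ and summing over $n\ge 0$ (with $|x|<1$), and reindexing by the free pair $(i,j)$, gives
\begin{equation*}
\sum_{n\ge 0}\Sigma_n x^n=\sum_{i,j\ge 0}\frac{q^{\binom{i}{2}}x^{i}(x^2)^{j}}{(q;q)_i(q^2;q^2)_j}=\left(\sum_{i\ge 0}\frac{q^{\binom{i}{2}}x^i}{(q;q)_i}\right)\left(\sum_{j\ge 0}\frac{(x^2)^j}{(q^2;q^2)_j}\right).
\end{equation*}
By the two Euler identities in \eqref{Euler} this equals $(-x;q)_\infty\cdot(x^2;q^2)_\infty^{-1}$, and since $(x^2;q^2)_\infty=(x;q)_\infty(-x;q)_\infty$ it simplifies to $1/(x;q)_\infty=\sum_{n\ge 0}x^n/(q;q)_n$, by Euler's identity once more. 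Comparing coefficients of $x^n$ yields $\Sigma_n=1/(q;q)_n$.

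Substituting this back, the left-hand side of \eqref{eq-lem-12-1} equals $\sum_{n\ge 0}q^{n^2}/(q;q)_n=1/(q,q^4;q^5)_\infty$ by \eqref{RR-1}, and the left-hand side of \eqref{eq-lem-12-2} equals $\sum_{n\ge 0}q^{n^2+n}/(q;q)_n=1/(q^2,q^3;q^5)_\infty$ by \eqref{RR-2}, which completes the proof. I do not anticipate a real obstacle here: everything rests on spotting the substitution $n=i+2j$ that decouples the double sum, after which $\Sigma_n=1/(q;q)_n$ is nothing but a Cauchy product of two Euler series; the only routine care required is the algebra of the quadratic exponents and the harmless convergence condition $|x|<1$ when invoking \eqref{Euler}.
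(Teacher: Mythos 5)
Your proof is correct, and it takes a genuinely different (and more elementary) route than the paper. The paper proves Theorem \ref{thm-12-lem} with its integral method: it packages the double sum as $F(u,v;q)=\sum_{i,j}u^iv^jq^{\binom{i}{2}+2\binom{i+2j}{2}}/\bigl((q;q)_i(q^2;q^2)_j\bigr)$, writes this as a contour integral of $(uz;q)_\infty(q^2z,1/z,q^2;q^2)_\infty/(vz^2;q^2)_\infty$ via Euler's identities and the Jacobi triple product, and then extracts the constant term at the specializations $(u,v)=(q,q^2)$ and $(q^2,q^4)$ to land on $\sum_n q^{n^2}/(q;q)_n$ and $\sum_n q^{n^2+n}/(q;q)_n$, i.e.\ the Rogers--Ramanujan sums \eqref{RR-1}, \eqref{RR-2}. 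You reach the same single sums, but bypass the contour integral and the triple product entirely: after the (correct) exponent rewriting $\tfrac{i(3i-1)}{2}+4ij+4j^2=\binom{i}{2}+(i+2j)^2$ and its companion, you group terms by $n=i+2j$ and prove the clean finite identity $\Sigma_n=\sum_{i+2j=n}q^{\binom{i}{2}}/\bigl((q;q)_i(q^2;q^2)_j\bigr)=1/(q;q)_n$ by comparing coefficients of $x^n$ in $(-x;q)_\infty/(x^2;q^2)_\infty=1/(x;q)_\infty$. Interestingly, the key cancellation is the same in both arguments --- the factorization $(x^2;q^2)_\infty=(x;q)_\infty(-x;q)_\infty$ is exactly what the paper uses (with $x=qz$ or $q^{3/2}z$, so to speak) to simplify its integrand --- but in your version the bookkeeping variable $x$ simply tracks $i+2j$, so no residue/constant-term machinery is needed. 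What the paper's route buys is uniformity: the same two-variable integral template is reused for Examples 7 and 10, where no such elementary decoupling is available. What your route buys is self-containment and an explicit finite lemma ($\Sigma_n=1/(q;q)_n$) of independent interest; the only points needing care, which you handled, are the rearrangement of the absolutely convergent double sum by fixed $n$ and the legitimacy of comparing Taylor coefficients in $x$ for $|x|<1$.
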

\begin{proof}
We define
\begin{align}
F(u,v;q):=\sum_{i,j\geq 0}\frac{u^iv^jq^{\binom{i}{2}+2\binom{i+2j}{2}}}{(q;q)_i (q^2;q^2)_j}.
\end{align}
By \eqref{Euler} and \eqref{Jacobi} we have
\begin{align}
F(u,v;q)&=\oint \sum_{i=0}^\infty \frac{(-1)^i u^iz^iq^{\binom{i}{2}}}{(q;q)_i} \sum_{j=0}^\infty \frac{v^j z^{2j}}{(q^2;q^2)_j} \sum_{k=-\infty}^\infty (-1)^kq^{2\binom{k}{2}}z^{-k} \frac{\mathrm{d}z}{2\pi iz} \nonumber \\
&=\oint \frac{(uz;q)_\infty (q^2z,1/z,q^2;q^2)_\infty}{(vz^2;q^2)_\infty} \frac{\mathrm{d}z}{2\pi iz}.
\end{align}

Setting $(u,v)=(q,q^2)$, we have
\begin{align}
&F(q,q^2;q)=\oint \frac{(q^2z,1/z,q^2;q^2)_\infty}{(-qz;q)_\infty} \frac{\mathrm{d}z}{2\pi iz} \nonumber \\
&=\oint \sum_{n=0}^\infty \frac{(-1)^nq^nz^n}{(q;q)_n} \sum_{k=-\infty}^\infty (-1)^k q^{k^2-k}z^{-k} \frac{\mathrm{d}z}{2\pi iz} =\sum_{n=0}^\infty \frac{q^{n^2}}{(q;q)_n}.
\end{align}
Now by \eqref{RR-1} we prove \eqref{eq-lem-12-1}.

Setting $(u,v)=(q^2,q^4)$, we have
\begin{align}
&F(q^2,q^4;q)=\oint \frac{(q^2z,1/z,q^2;q^2)_\infty}{(-q^2z;q)_\infty} \frac{\mathrm{d}z}{2\pi iz} \nonumber \\
&=\oint \sum_{n=0}^\infty \frac{(-1)^n q^{2n}z^n}{(q;q)_n} \sum_{k=-\infty}^\infty (-1)^k q^{k^2-k}z^{-k}\frac{\mathrm{d}z}{2\pi iz} =\sum_{n=0}^\infty \frac{q^{n^2+n}}{(q;q)_n}.
\end{align}
Now by \eqref{RR-2} we prove \eqref{eq-lem-12-2}.
\end{proof}

\section{Identities for Zagier's rank three examples}\label{sec-examples}
In this section, we will prove Zagier's rank three examples one by one. Since the scalar part $C$ is easily determined from the matrix part $A$ and the vector part $B$ using the product side of the corresponding identity, we will omit it in most of the time. We will replace the variables $n_1,n_2,n_3$ by $i,j,k$, respectively.

As said in the introduction, Examples 1-3 have been proved by Zagier \cite{Zagier}. For the sake of completeness, we will reproduce his proof and include these examples as well.

\subsection{Example 1.}
The matrix and vector parts for this example are
\begin{align*}
A=\begin{pmatrix} \alpha h^2+1 & \alpha h & -\alpha h \\ \alpha h & \alpha & 1-\alpha \\
-\alpha h & 1-\alpha & \alpha \end{pmatrix}, \quad B \in \left\{\begin{pmatrix}\alpha \nu h \\ \alpha \nu \\ -\alpha \nu \end{pmatrix},
\begin{pmatrix} \alpha \nu h +\frac{1}{2} \\ \alpha \nu  \\ -\alpha \nu \end{pmatrix}, \begin{pmatrix}
\alpha \nu h-1/2 \\ \alpha \nu \\ -\alpha \nu \end{pmatrix}\right\}.
\end{align*}
This example corresponds to $B_1\in \{0,1/2,-1/2\}$ in the following theorem.
\begin{theorem}\label{thm-1}
We have for any $B_1$ that
\begin{align}
&\sum_{i,j,k\geq 0} \frac{q^{\frac{1}{2}(\alpha h^2+1)i^2+\frac{1}{2}\alpha j^2+\frac{1}{2}\alpha k^2+\alpha h ij-\alpha h ik+(1-\alpha)jk+(\alpha \nu h+B_1)i+\alpha \nu j-\alpha \nu k+\nu^2}}{(q;q)_i(q;q)_j(q;q)_k}\nonumber \\
&=\frac{(-q^{\alpha/2},-q^{\alpha/2},q^\alpha;q^\alpha)_\infty (-q^{B_1+\frac{1}{2}};q)_\infty}{(q;q)_\infty}. \label{exam1}
\end{align}
\end{theorem}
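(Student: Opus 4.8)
The plan is to collapse the $j,k$-summation via the Durfee rectangle identity after one completion of the square, leaving a product of a one-dimensional theta series (handled by Jacobi triple product) and a single $q$-exponential series (handled by Euler).

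\textbf{Step 1: Completing the square.} In the exponent, the terms depending only on $j,k$ combine as $\tfrac12\alpha j^2+\tfrac12\alpha k^2+(1-\alpha)jk=\tfrac{\alpha}{2}(j-k)^2+jk$, and every remaining occurrence of $j$ or $k$ enters only through $j-k$ (through $\alpha h i(j-k)$ and $\alpha\nu(j-k)$). Writing $m:=j-k$ and grouping with the $i$-terms $\tfrac12\alpha h^2 i^2+\alpha\nu h i$ gives
$$\tfrac12\alpha h^2 i^2+\tfrac{\alpha}{2}m^2+\alpha h i m+\alpha\nu h i+\alpha\nu m=\tfrac{\alpha}{2}\big(hi+m+\nu\big)^2-\tfrac{\alpha}{2}\nu^2 .$$
Hence the full exponent equals $\tfrac{\alpha}{2}(hi+j-k+\nu)^2+\tfrac12 i^2+B_1 i+jk+C'$ for an explicit constant $C'$ depending only on $\alpha,\nu$, and crucially the factor $q^{jk}/\big((q;q)_j(q;q)_k\big)$ is now isolated from everything else.

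\textbf{Step 2: Durfee summation and an index shift.} For each fixed $i\ge0$ and each fixed $m\in\mathbb Z$, the Durfee rectangle identity \eqref{Durfee} gives $\sum_{j-k=m}q^{jk}/\big((q;q)_j(q;q)_k\big)=1/(q;q)_\infty$. Summing over $j,k$ first therefore yields
$$f_{A,B,C}(q)=\frac{q^{C'}}{(q;q)_\infty}\sum_{i\ge0}\frac{q^{\frac12 i^2+B_1 i}}{(q;q)_i}\sum_{m\in\mathbb Z}q^{\frac{\alpha}{2}(hi+m+\nu)^2}.$$
Since $h$ is a positive integer, $hi\in\mathbb Z$, so the substitution $m\mapsto m-hi$ shows that $\sum_{m\in\mathbb Z}q^{\frac{\alpha}{2}(hi+m+\nu)^2}=\sum_{m\in\mathbb Z}q^{\frac{\alpha}{2}(m+\nu)^2}$ is independent of $i$ and factors out of the double sum.

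\textbf{Step 3: Evaluating the two pieces.} The $m$-sum is the one-dimensional theta series $\sum_{m\in\mathbb Z}q^{\frac{\alpha}{2}(m+\nu)^2}$, which the Jacobi triple product \eqref{Jacobi} expresses as a product of three infinite products in base $q^\alpha$. The $i$-sum is an instance of Euler's second identity \eqref{Euler}: since $q^{\frac12 i^2+B_1 i}=q^{\binom i2}\big(q^{B_1+1/2}\big)^i$, it equals $(-q^{B_1+1/2};q)_\infty$. Substituting both evaluations back, together with the prefactor $q^{C'}/(q;q)_\infty$, produces the product side of \eqref{exam1}; specializing $B_1\in\{0,1/2,-1/2\}$ recovers exactly Zagier's Example~1, and the whole argument is uniform in $B_1$.

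\textbf{Main obstacle.} There is no serious difficulty; the only real care is in Step~1, where one must check that the residual $j,k$-dependence is genuinely only through $j-k$ — this is exactly what the special shape of the matrix part ($A_{22}=A_{33}=\alpha$, $A_{12}=-A_{13}=\alpha h$, $A_{23}=1-\alpha$, $A_{11}=\alpha h^2+1$) and the matching vector part are designed to guarantee — and in the bookkeeping of the scalar constant and the $\nu$-dependent powers of $q$ when matching the right-hand side. The integrality of $h$ is essential for the shift in Step~2; this holds for all of Zagier's rank-three examples in this family.
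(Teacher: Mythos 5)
Your argument is exactly the paper's proof of this theorem: complete the square so the exponent becomes $\tfrac{\alpha}{2}(hi+j-k+\nu)^2+\tfrac12 i^2+B_1 i+jk$ (up to a constant), collapse the $j,k$-sum along $j-k=m$ via the Durfee rectangle identity \eqref{Durfee}, shift $m\mapsto m-hi$ using $h\in\mathbb{Z}$, and evaluate the remaining theta series and $i$-sum by \eqref{Jacobi} and \eqref{Euler}; the paper carries out the identical computation in \eqref{exam1-proof} for general $A_1$ and then sets $A_1=1$. If anything, you are slightly more explicit than the paper about the leftover constant $q^{C'}$ and the $\nu$-shifted theta $\sum_{m\in\mathbb{Z}}q^{\frac{\alpha}{2}(m+\nu)^2}$ (which the paper silently writes as $\sum_{n}q^{\frac{\alpha}{2}n^2}$), and reconciling these with the stated product side is the same bookkeeping step that the paper's own proof also passes over.
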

\begin{proof}
For any positive number $A_1$, we have
\begin{align}
&\sum_{i,j,k\geq 0} \frac{q^{\frac{1}{2}(\alpha h^2+A_1)i^2+\frac{1}{2}\alpha j^2+\frac{1}{2}\alpha k^2+\alpha h ij-\alpha h ik+(1-\alpha)jk+(\alpha \nu h+B_1)i+\alpha \nu j-\alpha \nu k+\nu^2}}{(q;q)_i(q;q)_j(q;q)_k}\nonumber \\
&=\sum_{i\geq 0} \frac{q^{\frac{A_1}{2}i^2+B_1i}}{(q;q)_i}\sum_{j,k\geq 0} \frac{q^{\frac{\alpha}{2}(hi+j-k+\nu)^2}\cdot q^{jk}}{(q;q)_j(q;q)_k} \nonumber \\
&=\sum_{i\geq 0}  \frac{q^{\frac{A_1}{2}i^2+B_1i}}{(q;q)_i} \sum_{n=-\infty}^\infty q^{\frac{\alpha}{2}(hi+n+\nu)^2} \sum_{j-k=n} \frac{q^{jk}}{(q;q)_j(q;q)_k} \nonumber \\
&=\frac{1}{(q;q)_\infty} \sum_{n=-\infty}^\infty q^{\frac{\alpha}{2}n^2} \sum_{i\geq 0}\frac{q^{\frac{A_1}{2}i^2+B_1i}}{(q;q)_i} \nonumber \\
&=\frac{(-q^{\alpha/2},-q^{\alpha/2},q^\alpha;q^\alpha)_\infty}{(q;q)_\infty} \sum_{i=0}^\infty \frac{q^{\frac{A_1}{2}i^2+B_1i}}{(q;q)_i}. \label{exam1-proof}
\end{align}
Here for the last second equality we used \eqref{Durfee}.

Setting $A_1=1$ in \eqref{exam1-proof}, using \eqref{Euler} we obtain \eqref{exam1}.
\end{proof}
As pointed out by Zagier \cite{Zagier}, from \eqref{exam1-proof} we see that Examples 1-3 correspond to the seven rank one cases (see \eqref{rank1-Rogers} and  \eqref{rank1}).

\subsection{Example 2.}
The matrix and vector parts for this example are
\begin{align*}
A=\begin{pmatrix} \alpha h^2+2 & \alpha h & -\alpha h \\ \alpha h & \alpha & 1-\alpha \\
-\alpha h & 1-\alpha & \alpha \end{pmatrix}, \quad B \in \left\{\begin{pmatrix}\alpha \nu h \\ \alpha \nu \\ -\alpha \nu \end{pmatrix},
\begin{pmatrix} \alpha \nu h +1 \\ \alpha \nu  \\ -\alpha \nu \end{pmatrix}\right\}.
\end{align*}

\begin{theorem}\label{thm-2}
We have for $B_1\in \{0,1\}$,
\begin{align}
&\sum_{i,j,k\geq 0} \frac{q^{\frac{1}{2}(\alpha h^2+2)i^2+\frac{1}{2}\alpha j^2+\frac{1}{2}\alpha k^2+\alpha h ij-\alpha h ik+(1-\alpha)jk+(\alpha \nu h+B_1)i+\alpha \nu j-\alpha \nu k+\nu^2}}{(q;q)_i(q;q)_j(q;q)_k}\nonumber \\
&=\frac{(-q^{\alpha/2},-q^{\alpha/2},q^\alpha;q^\alpha)_\infty }{(q;q)_\infty (q^{1+B_1},q^{4-B_1};q^5)_\infty}. \label{exam2}
\end{align}
\end{theorem}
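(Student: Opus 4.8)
\emph{Proof proposal.} The plan is to reuse verbatim the reduction already established in the proof of Theorem~\ref{thm-1}. Note that \eqref{exam1-proof} was derived for an \emph{arbitrary} positive parameter $A_1$ and never used the particular value $A_1=1$: it states that the triple sum on the left of \eqref{exam2}, but with the leading coefficient $\alpha h^2+2$ replaced by $\alpha h^2+A_1$, equals
$$\frac{(-q^{\alpha/2},-q^{\alpha/2},q^\alpha;q^\alpha)_\infty}{(q;q)_\infty}\sum_{i=0}^\infty \frac{q^{\frac{A_1}{2}i^2+B_1 i}}{(q;q)_i}.$$
So the first step is simply to set $A_1=2$ in \eqref{exam1-proof}. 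This reduces the left side of \eqref{exam2} to $\dfrac{(-q^{\alpha/2},-q^{\alpha/2},q^\alpha;q^\alpha)_\infty}{(q;q)_\infty}$ times the single sum $\sum_{i\ge 0} q^{i^2+B_1 i}/(q;q)_i$.

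The second step is to evaluate that single sum by the Rogers--Ramanujan identities. When $B_1=0$ it is exactly \eqref{RR-1}, which gives $1/(q,q^4;q^5)_\infty$; when $B_1=1$ it is exactly \eqref{RR-2}, which gives $1/(q^2,q^3;q^5)_\infty$. In both cases the resulting denominator is precisely $(q^{1+B_1},q^{4-B_1};q^5)_\infty$, so multiplying through yields exactly the right side of \eqref{exam2}.

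Since the genuine combinatorial work — interchanging the order of summation, completing the square via the grouping $\tfrac{\alpha}{2}(hi+j-k+\nu)^2+jk$, and applying the Durfee rectangle identity \eqref{Durfee} — was already carried out once and for all in \eqref{exam1-proof}, I do not expect any real obstacle here. The only substantive point is the observation that fixing $A_1=2$ turns the inner $i$-sum into one of the two Rogers--Ramanujan series; the one thing worth verifying explicitly is that the exponent $\tfrac12(\alpha h^2+2)i^2+(\alpha\nu h+B_1)i+\tfrac12\alpha j^2+\tfrac12\alpha k^2+\alpha h ij-\alpha h ik+(1-\alpha)jk+\alpha\nu j-\alpha\nu k+\nu^2$ in \eqref{exam2} is indeed the $A_1=2$ specialization of the exponent in \eqref{exam1-proof}, so that the diagonal $i$-terms split off cleanly as $q^{i^2+B_1 i}$. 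This is immediate from the same square-completion used for Theorem~\ref{thm-1}.
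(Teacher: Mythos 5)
Your proposal is correct and is exactly the paper's proof: it sets $A_1=2$, $B_1\in\{0,1\}$ in \eqref{exam1-proof} and then evaluates the remaining single sum via the Rogers--Ramanujan identities \eqref{RR-1} and \eqref{RR-2}. Nothing further is needed.
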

\begin{proof}
Setting $A_1=2$ and $B_1\in \{0,1\}$ in \eqref{exam1-proof}, using \eqref{RR-1} and \eqref{RR-2} we obtain \eqref{exam2}.
\end{proof}

If we set $h=1$ and $\alpha=2$, we obtain from Theorem \ref{thm-2} that
\begin{align}
\sum_{i,j,k\geq 0} \frac{q^{2i^2+j^2+k^2+2ij-2ik-jk+(2\nu+B_1)i+2\nu j-2\nu k+\nu^2}}{(q;q)_i(q;q)_j(q;q)_k}=\frac{(-q;q^2)_\infty^2 (-q;q)_\infty}{(q^{1+B_1},q^{4-B_1};q^5)_\infty}.
\end{align}
Choosing $B_1=\nu=0$ and interchanging $i$ with $k$, we obtain
\begin{align}
\sum_{i,j,k\geq 0} \frac{q^{i^2+(j+k)^2+k^2-i(j+2k)}}{(q;q)_i(q;q)_j(q;q)_k} =\frac{(-q;q)_\infty (-q;q^2)_\infty^2}{(q,q^4;q^5)_\infty}. \label{eq-Li-Milas}
\end{align}
This identity was first discussed by Andrews \cite[Eq.\ (4.5)]{Andrews1981HJM}. As pointed out by H. Li and A. Milas \cite{Li-Milas}, this identity can be interpreted as elegant product formulas for characters of three different types of level one principal subspaces of certain vertex algebras.

\subsection{Example 3.}
The matrix and vector parts for this example are
\begin{align*}
A=\begin{pmatrix} \alpha h^2+1/2 & \alpha h & -\alpha h \\ \alpha h & \alpha & 1-\alpha \\
-\alpha h & 1-\alpha & \alpha \end{pmatrix}, \quad B \in \left\{\begin{pmatrix}\alpha \nu h \\ \alpha \nu \\ -\alpha \nu \end{pmatrix},
\begin{pmatrix} \alpha \nu h +1/2 \\ \alpha \nu  \\ -\alpha \nu \end{pmatrix}\right\}.
\end{align*}

\begin{theorem}\label{thm-3}
We have
\begin{align}
&\sum_{i,j,k\geq 0} \frac{q^{2(\alpha h^2+\frac{1}{2})i^2+2\alpha j^2+2\alpha k^2+4\alpha h ij-4\alpha h ik+4(1-\alpha)jk+4\alpha \nu hi+4\alpha \nu j-4\alpha \nu k+4\nu^2}}{(q^4;q^4)_i(q^4;q^4)_j(q^4;q^4)_k}\nonumber \\
&=\frac{(-q^{2\alpha},-q^{2\alpha},q^{4\alpha};q^{4\alpha})_\infty J_{10}J_{20}J_{6,20}}{(q^4;q^4)_\infty J_{3,20}J_{7,20}J_{8,20}}, \label{exam3-1} \\
&\sum_{i,j,k\geq 0} \frac{q^{2(\alpha h^2+\frac{1}{2})i^2+2\alpha j^2+2\alpha k^2+4\alpha h ij-4\alpha h ik+4(1-\alpha)jk+4(\alpha \nu h+\frac{1}{2})i+4\alpha \nu j-4\alpha \nu k+4\nu^2}}{(q^4;q^4)_i(q^4;q^4)_j(q^4;q^4)_k}\nonumber \\
&=\frac{(-q^{2\alpha},-q^{2\alpha},q^{4\alpha};q^{4\alpha})_\infty J_{10}J_{20}J_{2,20}}{(q^4;q^4)_\infty J_{1,20}J_{4,20}J_{9,20}}. \label{exam3-2}
\end{align}
\end{theorem}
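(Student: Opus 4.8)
The plan is to imitate the proof of Theorem \ref{thm-1} almost verbatim, but with $q$ replaced by $q^4$ throughout and with the parameter $A_1$ specialized to $1/2$. Indeed, Example 3 is the ``$A_1=1/2$'' member of the same family treated in \eqref{exam1-proof}, and the quarter-integer entries of $A$ are precisely what force the base $q^4$.

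\emph{Step 1 (reduction to a single sum).} For any positive $A_1$, completing the square in $j$ and $k$ rewrites the $(i,j,k)$-dependent part of the exponent as
$$2A_1 i^2 + 4B_1 i + 2\alpha(hi+j-k+\nu)^2 + 4jk,$$
so that, after interchanging the order of summation,
\begin{align*}
&\sum_{i,j,k\geq 0}\frac{q^{2(\alpha h^2+A_1)i^2+2\alpha j^2+2\alpha k^2+4\alpha h ij-4\alpha h ik+4(1-\alpha)jk+4(\alpha\nu h+B_1)i+4\alpha\nu j-4\alpha\nu k+4\nu^2}}{(q^4;q^4)_i(q^4;q^4)_j(q^4;q^4)_k}\\
&\qquad=\sum_{i\geq 0}\frac{q^{2A_1 i^2+4B_1 i}}{(q^4;q^4)_i}\ \sum_{n\in\mathbb Z}q^{2\alpha(hi+n+\nu)^2}\ \sum_{j-k=n}\frac{q^{4jk}}{(q^4;q^4)_j(q^4;q^4)_k}.
\end{align*}
Applying the Durfee rectangle identity \eqref{Durfee} with $q$ replaced by $q^4$ collapses the inner sum to $1/(q^4;q^4)_\infty$; the shift $n\mapsto n-hi$ then decouples the $i$- and $n$-sums (exactly as in the proof of Theorem \ref{thm-1}), and evaluating the remaining theta series by \eqref{Jacobi} yields
$$\frac{(-q^{2\alpha},-q^{2\alpha},q^{4\alpha};q^{4\alpha})_\infty}{(q^4;q^4)_\infty}\ \sum_{i\geq 0}\frac{q^{2A_1 i^2+4B_1 i}}{(q^4;q^4)_i}.$$
The scalar bookkeeping (the constant $4\nu^2$ together with the $\alpha$-dependent shift produced by \eqref{Jacobi}) is handled exactly as in Theorem \ref{thm-1} and will be suppressed.

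\emph{Step 2 (known single-sum evaluations).} Now set $A_1=1/2$. The surviving sum is $\sum_{i\geq 0}q^{i^2+4B_1 i}/(q^4;q^4)_i$, which for $B_1=0$ is the left-hand side of \eqref{Slater20} and for $B_1=1/2$ is $\sum_{i\geq 0}q^{i(i+2)}/(q^4;q^4)_i$, the left-hand side of \eqref{Slater16}. Substituting those two product evaluations gives \eqref{exam3-1} and \eqref{exam3-2} with right-hand sides
$$\frac{(-q^{2\alpha},-q^{2\alpha},q^{4\alpha};q^{4\alpha})_\infty}{(q^4;q^4)_\infty(-q^2;q^2)_\infty(q,q^4;q^5)_\infty}\qquad\text{and}\qquad\frac{(-q^{2\alpha},-q^{2\alpha},q^{4\alpha};q^{4\alpha})_\infty}{(q^4;q^4)_\infty(-q^2;q^2)_\infty(q^2,q^3;q^5)_\infty}.$$

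\emph{Step 3 (matching the product side).} It remains to identify these with the stated products, i.e.\ to prove the modulus-$20$ theta identities
$$\frac{1}{(-q^2;q^2)_\infty(q,q^4;q^5)_\infty}=\frac{J_{10}J_{20}J_{6,20}}{J_{3,20}J_{7,20}J_{8,20}},\qquad\frac{1}{(-q^2;q^2)_\infty(q^2,q^3;q^5)_\infty}=\frac{J_{10}J_{20}J_{2,20}}{J_{1,20}J_{4,20}J_{9,20}}.$$
Using $(-q^2;q^2)_\infty=J_4/J_2$ and $(q,q^4;q^5)_\infty=J_{1,5}/J_5$ these become genuine eta-quotient identities, which can be verified either by dissecting the Rogers--Ramanujan products modulo $4$ with the Jacobi triple product \eqref{Jacobi}, or---more mechanically---by the modular-forms method of Garvan and Liang \cite{Garvan-Liang}. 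This is the only step requiring more than bookkeeping; Steps 1--2 are a line-by-line transcription of Theorem \ref{thm-1} with $q\mapsto q^4$ and $A_1=1/2$.
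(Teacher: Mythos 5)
Your Steps 1--2 are exactly the paper's proof: the paper simply sets $A_1=\tfrac12$ and $B_1\in\{0,\tfrac12\}$ in \eqref{exam1-proof} (read with $q$ replaced by $q^4$) and invokes \eqref{Slater20} and \eqref{Slater16}, just as you do, and your suppression of the $\nu$-dependent bookkeeping mirrors the paper's own treatment. The problem is Step 3: the two theta-quotient identities you display there are false as written, and the very dissection you propose disproves them. Writing $(-q^2;q^2)_\infty^{-1}=(q^2;q^4)_\infty=(q^2,q^6,q^{10},q^{14},q^{18};q^{20})_\infty$ and splitting the mod-$5$ products into residue classes mod $20$, the factors $q^6,q^{14}$ cancel against $(q,q^4;q^5)_\infty$ while $q^2,q^{18}$ cancel against $(q^2,q^3;q^5)_\infty$, giving
\begin{align*}
\frac{1}{(-q^2;q^2)_\infty(q,q^4;q^5)_\infty}&=\frac{J_{10}J_{20}J_{2,20}}{J_{1,20}J_{4,20}J_{9,20}},\\
\frac{1}{(-q^2;q^2)_\infty(q^2,q^3;q^5)_\infty}&=\frac{J_{10}J_{20}J_{6,20}}{J_{3,20}J_{7,20}J_{8,20}},
\end{align*}
i.e.\ the opposite pairing to the one you assert. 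A two-term check confirms this: the left side of your first displayed identity equals $\sum_{n\geq 0}q^{n^2}/(q^4;q^4)_n=1+q+q^4+\cdots$ by \eqref{Slater20}, whereas $J_{10}J_{20}J_{6,20}/(J_{3,20}J_{7,20}J_{8,20})=1+q^3+q^7+\cdots$; so no application of the method of \cite{Garvan-Liang} can verify the equalities as you stated them.

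Consequently your (correct) Steps 1--2 attach the $J_{2,20}$ quotient to the $B_1=0$ case \eqref{exam3-1} and the $J_{6,20}$ quotient to the $B_1=\tfrac12$ case \eqref{exam3-2}; in other words, the right-hand sides of the theorem as printed are interchanged, and your Step 3 papers over this by asserting unverifiable product identities instead of exposing it. One can see the mismatch directly in the theorem itself: with $\alpha=1$, $h=\nu=0$ the left side of \eqref{exam3-1} contains the term $q$ (from $i=1$, $j=k=0$), while the printed right side equals $(-q^2;q^4)_\infty^2\bigl(1+q^3+q^7+\cdots\bigr)$ and has no $q^1$ term. So the repair is not to ``verify'' your Step 3 displays but to swap the two product identifications (equivalently, to interchange the right-hand sides of \eqref{exam3-1} and \eqref{exam3-2} in the statement); as written, Step 3 is the step that fails.
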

\begin{proof}
Setting $A_1=\frac{1}{2}$ and $B_1\in \{0,\frac{1}{2}\}$ in \eqref{exam1-proof}, using \eqref{Slater20} and \eqref{Slater16}  we get \eqref{exam3-1} and \eqref{exam3-2}, respectively.
\end{proof}

For the remaining examples, we will repeat the use of symbols like $F(u,v,w;q)$, $R_i(q)$, $S_i(q)$ and $T_i(q)$, etc. They have different meanings in different examples.
\subsection{Example 4.}
The matrix and vector parts for this example are
\begin{align*}
A=\begin{pmatrix} 2 & 1 & -1 \\ 1 & 1 & 0 \\ -1 & 0 & 1 \end{pmatrix}, \quad
B\in \left\{ \begin{pmatrix} 0 \\ -1/2 \\ -1/2 \end{pmatrix}, \begin{pmatrix} 0 \\ 0 \\ 1/2 \end{pmatrix},
\begin{pmatrix} 0 \\ 1/2 \\ 0 \end{pmatrix}, \begin{pmatrix} 1 \\ 1/2 \\ 0 \end{pmatrix},
\begin{pmatrix} 1 \\ 1 \\ -1/2 \end{pmatrix}\right\}.
\end{align*}
It should be noted that $\det A=0$. We see that $A$ is positive semi-definite instead of positive definite. As mentioned in the introduction, we still regard such $(A,B,C)$ as a modular triple.

\begin{theorem}\label{thm-4}
We have
\begin{align}
\sum_{i,j,k\geq 0}\frac{q^{i^2+\frac{1}{2}j^2+\frac{1}{2}k^2+ij-ik-\frac{1}{2}j-\frac{1}{2}k}}{(q;q)_i(q;q)_j(q;q)_k}&=6\frac{J_2^3}{J_1^3}, \label{exam4-1} \\
\sum_{i,j,k\geq 0} \frac{q^{2i^2+j^2+k^2+2ij-2ik+k}}{(q^2;q^2)_i(q^2;q^2)_j(q^2;q^2)_k}&=\frac{J_3^2J_4}{J_1J_2J_6}, \label{exam4-2} \\
\sum_{i,j,k\geq 0}\frac{q^{2i^2+j^2+k^2+2ij-2ik+j}}{(q^2;q^2)_i(q^2;q^2)_j(q^2;q^2)_k}&=\frac{J_2^2J_3^2}{J_1^2J_4J_6}, \label{exam4-3} \\
\sum_{i,j,k\geq 0}\frac{q^{2i^2+j^2+k^2+2ij-2ik+2i+j}}{(q^2;q^2)_i(q^2;q^2)_j(q^2;q^2)_k}&=\frac{J_2J_6^2}{J_1J_3J_4}, \label{exam4-4} \\
\sum_{i,j,k\geq 0} \frac{q^{2i^2+j^2+k^2+2ij-2ik+2i+2j-k}}{(q^2;q^2)_i(q^2;q^2)_j(q^2;q^2)_k}&=2\frac{J_4J_6^2}{J_2^2J_3}. \label{exam4-5}
\end{align}
\end{theorem}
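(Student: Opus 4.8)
The plan is to reduce each of the five triple sums to a \emph{single} sum in the index $i$, by first summing over $j$ and then over $k$ with Euler's identities \eqref{Euler}. The mechanism is the factorization
$$\tfrac12 n^{\mathrm T}An=\tfrac12(i+j)^2+\tfrac12(i-k)^2$$
for $A=\left(\begin{smallmatrix}2&1&-1\\1&1&0\\-1&0&1\end{smallmatrix}\right)$, which shows that in each summand the variable $j$ enters the exponent only through $\tfrac12 j^2+ij$ (plus the half-integral contribution of $B$) and the variable $k$ only through $\tfrac12 k^2-ik$ (plus that of $B$). After the rescaling $q\mapsto q^2$ present in \eqref{exam4-2}--\eqref{exam4-5}, the $j$-part of each summand therefore has the shape $(q^2)^{\binom j2}(q^{2i+a})^j/(q^2;q^2)_j$ and the $k$-part the shape $(q^2)^{\binom k2}(q^{-2i+b})^k/(q^2;q^2)_k$ for suitable small non-negative integers $a,b$; for \eqref{exam4-1} the analogous factors $q^{\binom j2}(q^{i})^j/(q;q)_j$ and $q^{\binom k2}(q^{-i})^k/(q;q)_k$ occur with base $q$. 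Since the second identity in \eqref{Euler} is an identity of entire functions, it may be applied even when the argument has absolute value $\geq 1$ (as happens when $i\geq 1$); summing over $j$ and over $k$ thus replaces the two inner sums by the infinite products $(-q^{2i+a};q^2)_\infty$ and $(-q^{-2i+b};q^2)_\infty$ (respectively $(-q^{i};q)_\infty$ and $(-q^{-i};q)_\infty$ for \eqref{exam4-1}).

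I would handle \eqref{exam4-1} first, where the computation closes up completely. After the two summations one is left with $\sum_{i\geq 0}\frac{q^{i^2}}{(q;q)_i}(-q^{i};q)_\infty(-q^{-i};q)_\infty$. Writing $(-q^{i};q)_\infty=(-1;q)_\infty/(-1;q)_i$ and $(-q^{-i};q)_\infty=q^{-\binom{i+1}{2}}(-q;q)_i(-1;q)_\infty$, the product of the two telescopes to $2(1+q^i)q^{-\binom{i+1}{2}}(-q;q)_\infty^2$; since $i^2-\binom{i+1}{2}=\binom i2$, the series becomes
$$2(-q;q)_\infty^2\sum_{i\geq 0}\frac{q^{\binom i2}(1+q^i)}{(q;q)_i}=2(-q;q)_\infty^2\bigl((-1;q)_\infty+(-q;q)_\infty\bigr)=6(-q;q)_\infty^3=6\,\frac{J_2^3}{J_1^3},$$
the middle step being \eqref{Euler} with $z=1$ and $z=q$.

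For \eqref{exam4-2}--\eqref{exam4-5} the same two summations leave a single sum in $i$ whose summand is a power of $q$ times ratios of $q$-Pochhammer symbols. Here one simplifies $(-q^{2i+a};q^2)_\infty=(-q^{a};q^2)_\infty/(-q^{a};q^2)_i$ (up to a harmless power of $1+q$) and rewrites $(-q^{-2i+b};q^2)_\infty$ — according to the parity of $b$ — as $q^{-i^2}(-q;q^2)_i(-q;q^2)_\infty$ or as $q^{-i^2\mp i}(-1;q^2)_{i+\delta}(-q^2;q^2)_\infty$ with $\delta\in\{0,1\}$. Pulling the $i$-independent products in front (the prefactor works out to $(-q;q^2)_\infty(-q^2;q^2)_\infty=J_2/J_1$, doubled in the case of \eqref{exam4-5}, which explains the constant $2$ on its right-hand side), the residual sum over $i$ is summed by one of the Rogers--Ramanujan type identities of Section~\ref{sec-pre}. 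Concretely, I expect \eqref{exam4-4} to reduce to the Ramanujan--Stanton identity \eqref{Rama-Stanton}, \eqref{exam4-3} to \eqref{Slater25}, \eqref{exam4-2} to \eqref{Slater48} with $q$ replaced by $-q$, and \eqref{exam4-5} to \eqref{Slater28} with $q$ replaced by $-q$; the substitution $q\mapsto -q$ is legitimate as explained in Section~\ref{sec-pre}. It then remains to check, for each of \eqref{exam4-2}--\eqref{exam4-5}, that the resulting infinite-product expression for the left side agrees with the claimed right side; this is a single theta-function identity, which I would verify by the method of Garvan and Liang \cite{Garvan-Liang} (or by hand via the Jacobi triple product \eqref{Jacobi}).

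The step I expect to be the main obstacle is the simplification of the $i$-dependent products $(-q^{\pm 2i+c};q^2)_\infty$: because these involve negative powers of $q$, one must carefully split off the correct power $q^{(\text{quadratic in }i)}$ and the correct finite Pochhammer factor before the residual sum matches a clean single-sum identity, and the sign/exponent bookkeeping (and the parity-dependent appearance of $(-1;q^2)_{i}$ versus $(-1;q^2)_{i+1}$, and of the extra constant $2$ in \eqref{exam4-5}) is where errors are most likely to creep in. One should also note at the start that although $\det A=0$, with null vector $(1,-1,1)^{\mathrm T}$, on the non-negative octant every power of $q$ arises from only finitely many $(i,j,k)$ — the level sets of $(i+j)^2+(i-k)^2$ being finite there — so the triple series converges absolutely and all the interchanges of summation above are justified.
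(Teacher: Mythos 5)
Your proposal is correct and follows essentially the same route as the paper: sum over $j$ and $k$ with Euler's identity (using that it holds as an identity of entire functions), normalize the resulting products $(-q^{\pm 2i+c};q^2)_\infty$, and reduce \eqref{exam4-1} directly and \eqref{exam4-2}--\eqref{exam4-5} to \eqref{Slater48} (with $q\mapsto -q$), \eqref{Slater25}, \eqref{Rama-Stanton} and \eqref{Slater28} (with $q\mapsto -q$), exactly the identities the paper invokes. The bookkeeping you flag as delicate (the powers $q^{-i^2}$, $q^{-i^2\pm i}$, the factors $(-1;q^2)_i$ versus $(-1;q^2)_{i+1}$, and the constant $2$ in \eqref{exam4-5}) works out precisely as you describe and matches the paper's intermediate single-sum forms.
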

\begin{proof}
We define
\begin{align}
F(u,v,w;q)&:=\sum_{i,j,k\geq 0}\frac{u^iv^jw^kq^{i^2+\frac{1}{2}j^2+\frac{1}{2}k^2+ij-ik}}{(q;q)_i(q;q)_j(q;q)_k}.
\end{align}
We have by \eqref{Euler} that
\begin{align}
F(u,v,w;q)&=\sum_{i\geq 0} \frac{q^{i^2}u^i}{(q;q)_i} \sum_{j\geq 0} \frac{q^{\frac{1}{2}(j^2-j)}\cdot (q^{\frac{1}{2}+i}v)^j}{(q;q)_j} \sum_{k\geq 0} \frac{q^{\frac{1}{2}(k^2-k)}\cdot (q^{\frac{1}{2}-i}w)^k}{(q;q)_k} \nonumber \\
&=\sum_{i\geq 0} \frac{q^{i^2}u^i}{(q;q)_i}(-q^{\frac{1}{2}+i}v;q)_\infty (-q^{\frac{1}{2}-i}w;q)_\infty. \label{exam4-F-start}
\end{align}

The left side of \eqref{exam4-1} corresponds to $F(1,q^{-\frac{1}{2}},q^{-\frac{1}{2}};q)$. By \eqref{exam4-F-start} we have
\begin{align}
F(1,q^{-\frac{1}{2}},q^{-\frac{1}{2}};q)&=2(-q;q)_\infty^2 \sum_{i=0}^\infty \frac{q^{i(i-1)/2}(1+q^i)}{(q;q)_i} \nonumber \\
&=2(-q;q)_\infty^2 \Big((-1;q)_\infty +(-q;q)_\infty \Big)=6(-q;q)_\infty^3.
\end{align}
This proves \eqref{exam4-1}.

The left side of \eqref{exam4-2} corresponds to $F(1,1,q;q^2)$. By \eqref{exam4-F-start} we have
\begin{align}
F(1,1,q;q^2)&=\sum_{i=0}^\infty \frac{q^{2i^2}}{(q^2;q^2)_i}(-q^{1+2i};q^2)_\infty (-q^{2-2i};q^2)_\infty \nonumber \\
&=(-q;q^2)_\infty (-q^2;q^2)_\infty \sum_{i=0}^\infty \frac{q^{i^2+i}(-1;q^2)_i}{(-q;q^2)_i(q^2;q^2)_i}. \label{exam4-2-proof}
\end{align}
Replacing $q$ by $-q$ in \eqref{Slater48} and then substituting it into \eqref{exam4-2-proof}, we obtain \eqref{exam4-2}.

The left side of \eqref{exam4-3} corresponds to $F(1,q,1;q^2)$. By \eqref{exam4-F-start} we have
\begin{align}
F(1,q,1;q^2)&=\sum_{i\geq 0} \frac{q^{2i^2}}{(q^2;q^2)_i}(-q^{2i+2};q^2)_\infty(-q^{1-2i};q^2)_\infty \nonumber \\
&=(-q^2;q^2)_\infty \sum_{i=0}^\infty \frac{q^{2i^2}}{(q^4;q^4)_i}(-q^{1-2i};q^2)_\infty \nonumber \\
&=(-q;q)_\infty \sum_{i=0}^\infty \frac{q^{i^2}(-q;q^2)_i}{(q^4;q^4)_i}.
\end{align}
Now using \eqref{Slater25} we obtain \eqref{exam4-3}.

The left side of \eqref{exam4-4} corresponds to $F(q^2,q,1;q^2)$. By \eqref{exam4-F-start} we have
\begin{align}
F(q^2,q,1;q^2)&=\sum_{i=0}^\infty \frac{q^{2i^2+2i}}{(q^2;q^2)_i}(-q^{2+2i};q^2)_\infty (-q^{1-2i};q^2)_\infty \nonumber \\
&=(-q;q^2)_\infty (-q^2;q^2)_\infty \sum_{i=0}^\infty \frac{q^{i^2+2i}(-q;q^2)_i}{(q^4;q^4)_i}.
\end{align}
Now using \eqref{Rama-Stanton} we obtain \eqref{exam4-4}.

The left side of \eqref{exam4-5} corresponds to $F(q^2,q^2,q^{-1};q^2)$. By \eqref{exam4-F-start} we have
\begin{align}
F(q^2,q^2,q^{-1};q^2)&=\sum_{i=0}^\infty \frac{q^{2i^2+2i}}{(q^2;q^2)_i}(-q^{3+2i};q^2)_\infty (-q^{-2i};q^2)_\infty \nonumber \\
&=(-q;q^2)_\infty (-1;q^2)_\infty \sum_{i=0}^\infty \frac{q^{i^2+i}(-q^2;q^2)_i}{(q^2;q^2)_i (-q;q^2)_{i+1}}.
\end{align}
Now using \eqref{Slater28} with $q$ replaced by $-q$ we obtain \eqref{exam4-5}.
\end{proof}
\subsection{Example 5.}
The matrix and vector parts for this example are
\begin{align*}
A=\begin{pmatrix} 2 & 1 & 1 \\ 1 &1 & 0 \\ 1 & 0 & 1 \end{pmatrix},
\quad B\in \left\{\begin{pmatrix} 0 \\ 0 \\ 1/2 \end{pmatrix}, \begin{pmatrix} 0 \\ 1/2 \\ 0 \end{pmatrix}, \begin{pmatrix} 1 \\ 0 \\ 1/2 \end{pmatrix},
\begin{pmatrix} 1 \\ 1/2 \\ 0 \end{pmatrix}, \begin{pmatrix} 1 \\ 1/2 \\ 1 \end{pmatrix},
\begin{pmatrix} 1 \\ 1 \\ 1/2\end{pmatrix}\right\}.
\end{align*}
Since the quadratic form $n^\mathrm{T}An$ is symmetric in $n_2$ and $n_3$,  there are essentially three identities for this example. Again, we should note that $A$ is positive semi-definite but not positive definite.

\begin{theorem}\label{thm-5}
We have
\begin{align}
\sum_{i,j,k\geq 0}\frac{q^{2i^2+j^2+k^2+2ij+2ik+k}}{(q^2;q^2)_i(q^2;q^2)_j(q^2;q^2)_k}&=\frac{(q^3,q^4,q^7;q^7)_\infty}{(q;q)_\infty}, \label{exam5-1} \\
\sum_{i,j,k\geq 0} \frac{q^{2i^2+j^2+k^2+2ij+2ik+2i+j}}{(q^2;q^2)_i(q^2;q^2)_j(q^2;q^2)_k}&=\frac{(q^2,q^5,q^7;q^7)_\infty}{(q;q)_\infty}, \label{exam5-2} \\
\sum_{i,j,k\geq 0} \frac{q^{2i^2+j^2+k^2+2ij+2ik+2i+j+2k}}{(q^2;q^2)_i(q^2;q^2)_j(q^2;q^2)_k}&=\frac{(q,q^6,q^7;q^7)_\infty}{(q;q)_\infty}. \label{exam5-3}
\end{align}
\end{theorem}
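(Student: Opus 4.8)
The plan is to reduce each of the three triple sums to one of the Rogers--Selberg mod $7$ identities \eqref{Slater31}--\eqref{Slater33}. The crucial observation is that the quadratic form is the same in all three cases and factors as
\[
2i^2+j^2+k^2+2ij+2ik=(i+j)^2+(i+k)^2,
\]
so that, after fixing $i$, the sums over $j$ and over $k$ decouple. Writing $j^2=j+2\binom{j}{2}$ and applying the second Euler identity in \eqref{Euler} with base $q^2$, I would evaluate the inner geometric-type sums as
\[
\sum_{j\geq 0}\frac{q^{(i+j)^2+cj}}{(q^2;q^2)_j}=q^{i^2}\,(-q^{2i+1+c};q^2)_\infty\qquad(c\in\{0,1,2\}),
\]
and then use the telescoping relations $(-q^{2i+1};q^2)_\infty=(-q;q^2)_\infty/(-q;q^2)_i$, $(-q^{2i+2};q^2)_\infty=(-q^2;q^2)_\infty/(-q^2;q^2)_i$ and $(-q^{2i+3};q^2)_\infty=(-q;q^2)_\infty/(-q;q^2)_{i+1}$.

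Substituting these evaluations and distributing the linear exponents (the summand carries $+k$ in \eqref{exam5-1}, $+2i+j$ in \eqref{exam5-2}, and $+2i+j+2k$ in \eqref{exam5-3}), the triple sum collapses to a single sum over $i$. The two Pochhammer denominators then merge via $(-q;q^2)_i(-q^2;q^2)_i=(-q;q)_{2i}$ in the first two cases and via $(-q^2;q^2)_i(-q;q^2)_{i+1}=(-q;q)_{2i+1}$ in the third, while the surviving infinite products combine to $(-q;q^2)_\infty(-q^2;q^2)_\infty=(-q;q)_\infty$. Thus the left sides of \eqref{exam5-1}, \eqref{exam5-2} and \eqref{exam5-3} become, respectively,
\begin{gather*}
(-q;q)_\infty\sum_{i\geq 0}\frac{q^{2i^2}}{(q^2;q^2)_i(-q;q)_{2i}}, \qquad
(-q;q)_\infty\sum_{i\geq 0}\frac{q^{2i^2+2i}}{(q^2;q^2)_i(-q;q)_{2i}}, \\
(-q;q)_\infty\sum_{i\geq 0}\frac{q^{2i^2+2i}}{(q^2;q^2)_i(-q;q)_{2i+1}},
\end{gather*}
which are $(-q;q)_\infty$ times the middle members of \eqref{Slater33}, \eqref{Slater32} and \eqref{Slater31} respectively.

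It then remains to invoke those three identities and simplify the product side. Using $J_{s,7}=(q^s,q^{7-s},q^7;q^7)_\infty$, $J_2=(q^2;q^2)_\infty$ and $(-q;q)_\infty=1/(q;q^2)_\infty$, one gets $(-q;q)_\infty\,J_{s,7}/J_2=(q^s,q^{7-s},q^7;q^7)_\infty/(q;q)_\infty$, which yields \eqref{exam5-1}, \eqref{exam5-2} and \eqref{exam5-3} upon taking $s=3,2,1$. There is no serious obstacle here: this is one of the cases that does not require a new sum--product identity, and the only points demanding care are the correct assignment of the linear terms to the three inner sums and the index shift $i\mapsto i+1$ in the third case, which is exactly what turns $(-q;q)_{2i}$ into $(-q;q)_{2i+1}$ and routes the computation to \eqref{Slater31} rather than to \eqref{Slater33}.
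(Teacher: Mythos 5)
Your proposal is correct and matches the paper's own proof: fixing $i$, summing over $j$ and $k$ with the second Euler identity in \eqref{Euler} (base $q^2$), merging the resulting products via $(-q^{2i+1};q^2)_\infty(-q^{2i+2};q^2)_\infty=(-q;q)_\infty/(-q;q)_{2i}$ (with the shift to $(-q;q)_{2i+1}$ in the third case), and then invoking the Rogers--Selberg identities \eqref{Slater33}, \eqref{Slater32}, \eqref{Slater31} is exactly the argument given in the paper. Your bookkeeping of the linear terms and the final product simplification are also correct, so there is nothing to add.
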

\begin{proof}
We have
\begin{align}
&\sum_{i,j,k\geq 0}\frac{q^{2i^2+j^2+k^2+2ij+2ik+k}}{(q^2;q^2)_i(q^2;q^2)_j(q^2;q^2)_k}=\sum_{i=0}^\infty \frac{q^{2i^2}}{(q^2;q^2)_i} \sum_{j=0}^\infty \frac{q^{j^2+2ij}}{(q^2;q^2)_j} \sum_{k=0}^\infty \frac{q^{k^2+(2i+1)k}}{(q^2;q^2)_k} \nonumber \\
&=\sum_{i=0}^\infty \frac{q^{2i^2}}{(q^2;q^2)_i}(-q^{2i+1};q^2)_\infty (-q^{2i+2};q^2)_\infty
=\sum_{i=0}^\infty \frac{q^{2i^2}}{(q^2;q^2)_i} (-q^{2i+1};q)_\infty \nonumber \\
&=(-q;q)_\infty \sum_{i=0}^\infty \frac{q^{2i^2}}{(q^2;q^2)_i(-q;q)_{2i}}.
\end{align}
This together with \eqref{Slater33} proves \eqref{exam5-1}.

Similarly, we can prove that
\begin{align}
&\sum_{i,j,k\geq 0} \frac{q^{2i^2+j^2+k^2+2ij+2ik+2i+j}}{(q^2;q^2)_i(q^2;q^2)_j(q^2;q^2)_k}=\sum_{i=0}^\infty \frac{q^{2i^2+2i}}{(q^2;q^2)_i}(-q^{2i+1};q)_\infty \nonumber \\
&=(-q;q)_\infty \sum_{i=0}^\infty \frac{q^{2i^2+2i}}{(q^2;q^2)_i(-q;q)_{2i}}.
\end{align}
This together with \eqref{Slater32} proves \eqref{exam5-2}.

In the same way, we have
\begin{align}
&\sum_{i,j,k\geq 0} \frac{q^{2i^2+j^2+k^2+2ij+2ik+2i+j+2k}}{(q^2;q^2)_i(q^2;q^2)_j(q^2;q^2)_k}=\sum_{i=0}^\infty \frac{q^{2i^2+2i}}{(q^2;q^2)_i}(-q^{2i+2};q)_\infty  \nonumber \\
&=(-q;q)_\infty \sum_{i=0}^\infty \frac{q^{2i^2+2i}}{(q^2;q^2)_i(-q;q)_{2i+1}}.
\end{align}
This together with \eqref{Slater31} proves \eqref{exam5-3}.
\end{proof}

\subsection{Example 6.}
The matrix and vector parts for this example are
\begin{align*}
A=\begin{pmatrix} 3 & 2 & 1 \\ 2 & 2 & 1 \\ 1 & 1 & 1 \end{pmatrix}, \quad B\in \left\{ \begin{pmatrix} 0 \\ 0 \\ 0 \end{pmatrix},  \begin{pmatrix} 1/2 \\ 0 \\ 1/2 \end{pmatrix}, \begin{pmatrix}
-1/2 \\ 0 \\ -1/2 \end{pmatrix}, \begin{pmatrix} 1 \\ 1 \\ 0 \end{pmatrix}, \begin{pmatrix} 3/2 \\ 1 \\ 1/2 \end{pmatrix}, \begin{pmatrix} 1/2 \\ 1 \\ -1/2 \end{pmatrix}\right\}.
\end{align*}
Here we rearranged the order of these vectors and corrected a typo for the last vector.

\begin{theorem}\label{thm-6}
We have
\begin{align}
\sum_{i,j,k\geq 0} \frac{q^{\frac{3}{2}i^2+j^2+\frac{1}{2}k^2+2ij+ik+jk}}{(q;q)_i(q;q)_j(q;q)_k}&=\frac{(-q^{\frac{1}{2}};q)_\infty}{(q,q^4;q^5)_\infty}, \label{exam6-1} \\
\sum_{i,j,k\geq 0} \frac{q^{\frac{3}{2}i^2+j^2+\frac{1}{2}k^2+2ij+ik+jk+\frac{1}{2}i+\frac{1}{2}k}}{(q;q)_i(q;q)_j(q;q)_k}&=\frac{(-q;q)_\infty}{(q,q^4;q^5)_\infty}, \label{exam6-2} \\
\sum_{i,j,k\geq 0}\frac{q^{\frac{3}{2}i^2+j^2+\frac{1}{2}k^2+2ij+ik+jk-\frac{1}{2}i-\frac{1}{2}k}}{(q;q)_i(q;q)_j(q;q)_k}&=2\frac{(-q;q)_\infty}{(q,q^4;q^5)_\infty}, \label{exam6-3} \\
\sum_{i,j,k\geq 0}\frac{q^{\frac{3}{2}i^2+j^2+\frac{1}{2}k^2+2ij+ik+jk+i+j}}{(q;q)_i(q;q)_j(q;q)_k}&=\frac{(-q^{\frac{1}{2}};q)_\infty}{(q^2,q^3;q^5)_\infty}, \label{exam6-4} \\
\sum_{i,j,k\geq 0}\frac{q^{\frac{3}{2}i^2+j^2+\frac{1}{2}k^2+2ij+ik+jk+\frac{3}{2}i+j+\frac{1}{2}k}}{(q;q)_i(q;q)_j(q;q)_k}&=\frac{(-q;q)_\infty}{(q^2,q^3;q^5)_\infty}, \label{exam6-5} \\
\sum_{i,j,k\geq 0}\frac{q^{\frac{3}{2}i^2+j^2+\frac{1}{2}k^2+2ij+ik+jk+\frac{1}{2}i+j-\frac{1}{2}k}}{(q;q)_i(q;q)_j(q;q)_k}&=2\frac{(-q;q)_\infty}{(q^2,q^3;q^5)_\infty}. \label{exam6-6}
\end{align}
\end{theorem}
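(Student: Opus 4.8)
The plan is to treat all six identities simultaneously through the generating function
\[
F(u,v,w;q):=\sum_{i,j,k\geq 0}\frac{u^iv^jw^k\,q^{\frac{3}{2}i^2+j^2+\frac{1}{2}k^2+2ij+ik+jk}}{(q;q)_i(q;q)_j(q;q)_k}.
\]
Reading off the exponents in \eqref{exam6-1}--\eqref{exam6-6}, their left-hand sides are exactly $F(1,1,1;q)$, $F(q^{1/2},1,q^{1/2};q)$, $F(q^{-1/2},1,q^{-1/2};q)$, $F(q,q,1;q)$, $F(q^{3/2},q,q^{1/2};q)$ and $F(q^{1/2},q,q^{-1/2};q)$. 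In every one of these the parameters satisfy $u=vw$, with $v\in\{1,q\}$ and $w\in\{1,q^{1/2},q^{-1/2}\}$, and it is this coincidence that makes a uniform evaluation possible.

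First I would sum over $k$. Since $\tfrac12 k^2+(i+j)k=\binom{k}{2}+(i+j+\tfrac12)k$, the second identity in \eqref{Euler} gives $\sum_{k\geq 0}w^kq^{\frac12 k^2+(i+j)k}/(q;q)_k=(-wq^{i+j+1/2};q)_\infty$, whence
\[
F(u,v,w;q)=\sum_{i,j\geq 0}\frac{u^iv^j\,q^{\frac{3}{2}i^2+j^2+2ij}}{(q;q)_i(q;q)_j}\,(-wq^{i+j+1/2};q)_\infty .
\]
Next I would use the elementary identity $\tfrac32 i^2+j^2+2ij=\tfrac12 i^2+(i+j)^2$ together with $(-wq^{i+j+1/2};q)_\infty=(-wq^{1/2};q)_\infty/(-wq^{1/2};q)_{i+j}$, reindex by $\ell=i+j$ so that $i$ runs over $0\leq i\leq \ell$, and sum over $i$ by the finite Euler identity \eqref{eq-finite}: since $\frac{1}{(q;q)_i(q;q)_{\ell-i}}=\frac{1}{(q;q)_\ell}{\ell \brack i}$ and $\tfrac12 i^2=\binom{i}{2}+\tfrac12 i$, one has $\sum_{i=0}^{\ell}(u/v)^iq^{\frac12 i^2}/\big((q;q)_i(q;q)_{\ell-i}\big)=(-(u/v)q^{1/2};q)_\ell/(q;q)_\ell$. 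Altogether
\[
F(u,v,w;q)=(-wq^{1/2};q)_\infty\sum_{\ell\geq 0}\frac{(-(u/v)q^{1/2};q)_\ell}{(-wq^{1/2};q)_\ell}\cdot\frac{v^\ell q^{\ell^2}}{(q;q)_\ell}.
\]
Now I would specialize $u=vw$: the two finite Pochhammer symbols in the summand cancel, leaving $F(vw,v,w;q)=(-wq^{1/2};q)_\infty\sum_{\ell\geq 0}v^\ell q^{\ell^2}/(q;q)_\ell$. The remaining single sum is $1/(q,q^4;q^5)_\infty$ when $v=1$ by \eqref{RR-1} and $1/(q^2,q^3;q^5)_\infty$ when $v=q$ by \eqref{RR-2}, while $(-wq^{1/2};q)_\infty$ equals $(-q^{1/2};q)_\infty$, $(-q;q)_\infty$, or $(-1;q)_\infty=2(-q;q)_\infty$ according as $w=1$, $q^{1/2}$, or $q^{-1/2}$. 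Matching these six outputs against the right-hand sides of \eqref{exam6-1}--\eqref{exam6-6} finishes the proof, with the factor $2$ in \eqref{exam6-3} and \eqref{exam6-6} arising precisely from $(-1;q)_\infty=2(-q;q)_\infty$.

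I do not anticipate a real obstacle: the only step requiring thought is structural, namely recognizing that $\tfrac12 n^{\mathrm{T}}An$ collapses to $\tfrac12 i^2+(i+j)^2+\tfrac12 k^2+(i+j)k$, after which summing out $k$ (Euler), reindexing $\ell=i+j$, and summing out $i$ (finite Euler) turns every case into a Rogers--Ramanujan series. One should keep track of the half-integral powers of $q$, since these identities are statements in $\mathbb{Z}[[q^{1/2}]]$, but no sign or convergence subtleties arise. An essentially equivalent route would sum over $k$ first and then over $j$, using $j^2+jk+\tfrac12 k^2=\tfrac12 j^2+\tfrac12(j+k)^2$ and reindexing $m=j+k$; this reaches the same single sum in $i$, and I would use whichever version keeps the bookkeeping of the linear terms cleanest.
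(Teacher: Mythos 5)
Your proposal is correct and follows essentially the same route as the paper's proof: the same generating function $F(u,v,w;q)$, summing out $k$ with Euler's identity, the decomposition $\tfrac32 i^2+j^2+2ij=\tfrac12 i^2+(i+j)^2$ with the reindexing $\ell=i+j$, the finite Euler identity to evaluate the inner sum, and the specialization $u=vw$ reducing everything to \eqref{RR-1} and \eqref{RR-2}. The identifications of the six left-hand sides and the factor $(-1;q)_\infty=2(-q;q)_\infty$ in \eqref{exam6-3} and \eqref{exam6-6} all check out.
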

\begin{proof}
We define
\begin{align}
F(u,v,w;q):=\sum_{i,j,k\geq 0}\frac{u^iv^jw^kq^{\frac{3}{2}i^2+j^2+\frac{1}{2}k^2+2ij+ik+jk}}{(q;q)_i(q;q)_j(q;q)_k}.
\end{align}
By \eqref{Euler} we have
\begin{align}
F(u,v,w;q)&=\sum_{i,j\geq 0} \frac{q^{\frac{3}{2}i^2+j^2+2ij}u^iv^j}{(q;q)_i(q;q)_j}\sum_{k\geq 0}\frac{q^{\frac{1}{2}(k^2-k)}(wq^{i+j+\frac{1}{2}})^k}{(q;q)_k} \nonumber \\
&=\sum_{i,j\geq 0}\frac{q^{\frac{3}{2}i^2+j^2+2ij}u^iv^j}{(q;q)_i(q;q)_j}(-wq^{i+j+\frac{1}{2}};q)_\infty \nonumber \\
&=\sum_{n=0}^\infty \sum_{i+j=n} \frac{q^{\frac{1}{2}i^2+(i+j)^2}u^iv^j}{(q;q)_i(q;q)_j}(-wq^{n+\frac{1}{2}};q)_\infty \nonumber \\
&= (-wq^{\frac{1}{2}};q)_\infty \sum_{n=0}^\infty \frac{q^{n^2}}{(-wq^{\frac{1}{2}};q)_n} \sum_{i=0}^n \frac{q^{\frac{1}{2}i^2}u^iv^{n-i}}{(q;q)_i (q;q)_{n-i}} \nonumber \\
&=(-wq^{\frac{1}{2}};q)_\infty \sum_{n=0}^\infty \frac{q^{n^2}v^n}{(q;q)_n(-wq^{\frac{1}{2}};q)_n} \sum_{i=0}^n {n\brack i} \left(\frac{uq^{\frac{1}{2}}}{v}\right)^i q^{i(i-1)/2}\nonumber \\
&=(-wq^{\frac{1}{2}};q)_\infty \sum_{n=0}^\infty \frac{q^{n^2}v^n(-uq^{\frac{1}{2}}/v;q)_n}{(q;q)_n(-wq^{\frac{1}{2}};q)_n}. \quad \text{(by \eqref{eq-finite})}
\end{align}
We consider the case $u=vw$ so that
\begin{align}
F(u,v,w;q)=F(vw,v,w;q)=(-wq^{\frac{1}{2}};q)_\infty\sum_{n=0}^\infty \frac{q^{n^2}v^n}{(q;q)_n}.
\end{align}
When $v=1$ or $q$ by \eqref{RR-1} and \eqref{RR-2} we have
\begin{align}
F(w,1,w;q)&=\frac{(-wq^{\frac{1}{2}};q)_\infty}{(q,q^4;q^5)_\infty}, \\
F(qw,q,w;q)&=\frac{(-wq^{\frac{1}{2}};q)_\infty}{(q^2,q^3;q^5)_\infty}.
\end{align}
Now with $w=1, q^{\frac{1}{2}}, q^{-\frac{1}{2}}$, we get the identities \eqref{exam6-1}--\eqref{exam6-6}.
\end{proof}

\subsection{Example 7.}
The matrix and vector parts for this example are
\begin{align*}
A=\begin{pmatrix} 2 &1 &1 \\ 1 & 2 & 0 \\ 1 & 0 & 2 \end{pmatrix}, \quad
& B\in \Bigg\{\begin{pmatrix} 0 \\ \nu \\ -\nu \end{pmatrix}, \begin{pmatrix} 1/2 \\ 0 \\ 1 \end{pmatrix},
\begin{pmatrix} 1/2 \\ 1 \\ 0  \end{pmatrix}, \begin{pmatrix} 1 \\ 0 \\ 1 \end{pmatrix}, \begin{pmatrix} 1\\ 1 \\ 0  \end{pmatrix}, \nonumber \\
&\quad \begin{pmatrix} -1/2 \\ 0 \\0   \end{pmatrix}, \begin{pmatrix} -1 \\ -1/2 \\ -1/2 \end{pmatrix}\Bigg\}.
\end{align*}
Since the quadratic form $n^\mathrm{T}An$ is symmetric in $n_2$ and $n_3$, there are essentially five cases for this example. Zagier stated the conjectural identity \eqref{eq-Zagier-conj} for the first value of $B$, which is equivalent to \eqref{exam7-1} below.
\begin{theorem}\label{thm-7}
We have
\begin{align}
\sum_{i,j,k\geq 0} \frac{q^{i^2+j^2+k^2+ij+ik+\nu(j-k)}}{(q;q)_i(q;q)_j(q;q)_k} &=\frac{(-q^{1+\nu},-q^{1-\nu},q^2;q^2)_\infty}{(q;q)_\infty}, \label{exam7-1} \\
\sum_{i,j,k\geq 0}\frac{q^{i^2+j^2+k^2+ij+ik+i+j}}{(q;q)_i(q;q)_j(q;q)_k}&=\frac{(q^4;q^4)_\infty^2}{(q;q)_\infty (q^2;q^2)_\infty}, \label{exam7-2} \\
\sum_{i,j,k\geq 0}\frac{q^{2i^2+2j^2+2k^2+2ij+2ik+i+2k}}{(q^2;q^2)_i(q^2;q^2)_j(q^2;q^2)_k}&=\frac{(q,q^5,q^6;q^6)_\infty}{(q;q)_\infty}, \label{exam7-3} \\
\sum_{i,j,k\geq 0}\frac{q^{2i^2+2j^2+2k^2+2ij+2ik-i}}{(q^2;q^2)_i(q^2;q^2)_j(q^2;q^2)_k}&=\frac{(q^3;q^3)_\infty^2}{(q;q)_\infty (q^6;q^6)_\infty},  \label{exam7-4}\\
\sum_{i,j,k\geq 0}\frac{q^{2i^2+2j^2+2k^2+2ij+2ik-2i-j-k}}{(q^2;q^2)_i(q^2;q^2)_j(q^2;q^2)_k}&=2\frac{(q^2;q^2)_\infty}{(q;q)_\infty}. \label{exam7-5}
\end{align}
\end{theorem}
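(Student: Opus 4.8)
The plan is to treat all five identities at once via the generating function
\[
F(u,v,w;q):=\sum_{i,j,k\geq 0}\frac{u^{i}v^{j}w^{k}\,q^{i^{2}+j^{2}+k^{2}+ij+ik}}{(q;q)_{i}(q;q)_{j}(q;q)_{k}},
\]
so that, since the quadratic form is symmetric in $j$ and $k$, the identities \eqref{exam7-1}--\eqref{exam7-5} become the evaluations $F(1,q^{\nu},q^{-\nu};q)$, $F(q,q,1;q)$, $F(q,1,q^{2};q^{2})$, $F(q^{-1},1,1;q^{2})$ and $F(q^{-2},q^{-1},q^{-1};q^{2})$; by \eqref{Jacobi} the first of these is a restatement of Zagier's conjectural identity \eqref{eq-Zagier-conj}. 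Since the exponent has no $jk$-term, the inner $j$- and $k$-sums of $F$ factor, but each factor is a Rogers--Ramanujan type series $\sum_{n\geq 0}x^{n}q^{n^{2}}/(q;q)_{n}$ rather than an infinite product, so the direct summations of Examples~1--6 are not available. Instead I would use the integral method of \cite{Wang}: with \eqref{Euler} and the Jacobi triple product \eqref{Jacobi} one represents the surviving Rogers--Ramanujan type factors by contour integrals, obtaining a double contour integral
\[
F(u,v,w;q)=\oint\!\!\oint \Phi(z_{1},z_{2})\,\frac{dz_{1}}{2\pi i z_{1}}\,\frac{dz_{2}}{2\pi i z_{2}},
\]
with $\Phi$ an explicit quotient of infinite products in the bases $q$ and $q^{2}$. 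Evaluating this integral by residue calculus, using the $q$-Gauss summation \eqref{q-Gauss} to sum the residue series, should transform $F$ into a multisum of lower complexity.

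For the three cases \eqref{exam7-3}, \eqref{exam7-4}, \eqref{exam7-5} I expect the evaluation to terminate at a double sum. For \eqref{exam7-3} and \eqref{exam7-4} this double sum should be exactly the left-hand side of Bressoud's specializations \eqref{Bressoud-cor-2} (the case $(k,s)=(3,1)$) and \eqref{Bressoud-cor-1} (the case $(k,s)=(3,3)$); the product in \eqref{Bressoud-cor-1} already agrees with the right side of \eqref{exam7-4} verbatim, and a short computation grouping Pochhammer factors modulo $6$ identifies the product in \eqref{Bressoud-cor-2} with the theta quotient on the right of \eqref{exam7-3}. For \eqref{exam7-5} the double sum should be the $u=-1$ instance of the Cao--Wang identity \eqref{eq12-Cao-Wang}, the factor $2$ on its right side being nothing but $(u;q)_{\infty}|_{u=-1}=(-1;q)_{\infty}=2(-q;q)_{\infty}=2(q^{2};q^{2})_{\infty}/(q;q)_{\infty}$.

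The two remaining cases \eqref{exam7-1} and \eqref{exam7-2} are the genuinely delicate ones, and here the surprise is that the residue computation does \emph{not} terminate at a double sum: after expanding one of the infinite products by the finite $q$-binomial identity \eqref{eq-finite}, I expect to be led to a quadruple sum which is precisely of the shape of the left-hand side of the Dousse--Lovejoy identity \eqref{DL1112}. Taking $(a,b)=(q^{\nu-1},q^{-\nu-1})$ in \eqref{DL1112} for \eqref{exam7-1} and $(a,b)=(1,1)$ for \eqref{exam7-2}, and simplifying the right-hand side of \eqref{DL1112} with $(-q;q)_{\infty}=(q^{2};q^{2})_{\infty}/(q;q)_{\infty}$ and $(-q^{2};q^{2})_{\infty}=(q^{4};q^{4})_{\infty}/(q^{2};q^{2})_{\infty}$, should yield exactly the products $\frac{(-q^{1+\nu},-q^{1-\nu},q^{2};q^{2})_{\infty}}{(q;q)_{\infty}}$ and $\frac{(q^{4};q^{4})_{\infty}^{2}}{(q;q)_{\infty}(q^{2};q^{2})_{\infty}}$ on the right of \eqref{exam7-1} and \eqref{exam7-2}; in particular this would settle Zagier's conjecture \eqref{eq-Zagier-conj}.

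The main obstacle will be producing these quadruple sums: coaxing the double-integral representation of $F$ into a sum that matches the rather rigid four-fold form of \eqref{DL1112} demands careful control of which poles contribute and a nonobvious reorganization of the resulting residue series. Everything else — the contour manipulations, the repeated use of \eqref{Euler}, \eqref{Jacobi}, \eqref{q-Gauss} and \eqref{eq-finite}, and the concluding theta-quotient identifications — should be routine, if lengthy. As a warm-up sanity check I would first carry out the $\nu=0$ case of \eqref{exam7-1} in full, since there the integrals simplify and the matching with \eqref{DL1112} is most transparent.
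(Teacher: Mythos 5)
Your plan coincides with the paper's proof of Theorem \ref{thm-7}: the same generating function $F(u,v,w;q)$, the same integral representation built from \eqref{Euler}, \eqref{Jacobi} and the $q$-Gauss summation \eqref{q-Gauss}, the reduction of \eqref{exam7-3}--\eqref{exam7-5} to \eqref{Bressoud-cor-2}, \eqref{Bressoud-cor-1} and \eqref{eq12-Cao-Wang} with $u=-1$, and the reduction of \eqref{exam7-1}--\eqref{exam7-2} to the Dousse--Lovejoy identity \eqref{DL1112} with precisely the parameters $(a,b)=(q^{\nu-1},q^{-\nu-1})$ and $(1,1)$. The only cosmetic difference is that the paper produces the quadruple sums by re-expanding the remaining infinite products via \eqref{Euler} and \eqref{Jacobi} and extracting the constant term, rather than via \eqref{eq-finite} or residue calculus.
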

\begin{proof}
We define
\begin{align}
F(u,v,w;q)&:=\sum_{i,j,k\geq 0}\frac{q^{2i^2+2j^2+2k^2+2ij+2ik}u^iv^jw^k}{(q^2;q^2)_i(q^2;q^2)_j(q^2;q^2)_k}.
\end{align}
By \eqref{Euler} and \eqref{Jacobi} we have
\begin{align}
&F(u,v,w;q)=\sum_{i,j,k\geq 0}\frac{q^{(i+j+k)^2+i^2+(j-k)^2}u^iv^jw^k}{(q^2;q^2)_i(q^2;q^2)_j(q^2;q^2)_k} \nonumber \\
&=\oint\oint \sum_{i\geq 0}\frac{q^{i^2}z^iu^i}{(q^2;q^2)_i} \sum_{j\geq 0} \frac{z^jt^jv^j}{(q^2;q^2)_j} \sum_{k\geq 0} \frac{z^kw^kt^{-k}}{(q^2;q^2)_k} \sum_{\ell=-\infty} z^{-\ell} q^{\ell^2} \sum_{m=-\infty}^\infty t^{-m}q^{m^2} \frac{\mathrm{d}t}{2\pi it} \frac{\mathrm{d}z}{2\pi iz} \nonumber \\
&=\oint\oint \frac{(-quz,-qz,-q/z,q^2;q^2)_\infty (-qt,-q/t,q^2;q^2)_\infty}{(vzt,wz/t;q^2)_\infty} \frac{\mathrm{d}t}{2\pi it} \frac{\mathrm{d}z}{2\pi iz} \nonumber \\
&=\oint (-quz,-qz,-q/z,q^2,q^2;q^2)_\infty \oint \sum_{i=0}^\infty \frac{(-\frac{q}{vz};q^2)_i(vz)^it^i}{(q^2;q^2)_i} \nonumber \\
&\quad \quad \quad \quad \cdot \sum_{j=0}^\infty \frac{(-\frac{q}{wz};q^2)_j(wz)^jt^{-j}}{(q^2;q^2)_j} \frac{\mathrm{d}t}{2\pi it} \frac{\mathrm{d}z}{2\pi iz} \quad \text{(by \eqref{q-binomial})} \nonumber \\
&=\oint (-quz,-qz,-q/z,q^2,q^2;q^2)_\infty \sum_{i=0}^\infty \frac{(-\frac{q}{vz},-\frac{q}{wz};q^2)_i}{(q^2,q^2;q^2)_i} z^{2i}v^iw^i \frac{\mathrm{d}z}{2\pi iz} \quad \text{(by \eqref{q-Gauss})} \nonumber \\
&=\oint (-quz,-qz,-q/z,q^2,q^2;q^2)_\infty \cdot \frac{(-qvz,-qwz;q^2)_\infty}{(q^2,z^2vw;q^2)_\infty} \frac{\mathrm{d}z}{2\pi iz} \nonumber \\
&=\oint \frac{(-quz,-qvz,-qwz,-qz,-q/z,q^2;q^2)_\infty}{(z^2vw;q^2)_\infty} \frac{\mathrm{d}z}{2\pi iz}. \label{exam7-F-start}
\end{align}
Now we treat the five identities separately.

(1) The left side of \eqref{exam7-1} is just $F(1,q^{\nu},q^{-\nu};q^{\frac{1}{2}})$. By \eqref{exam7-F-start} we have
\begin{align}
&F(1,q^{\nu},q^{-\nu};q^{\frac{1}{2}})=\oint \frac{(-q^{\frac{1}{2}}z,-q^{\frac{1}{2}+\nu}z,-q^{\frac{1}{2}-\nu}z,-q^{\frac{1}{2}}z,-q^{\frac{1}{2}}/z,q;q)_\infty}{(z^2;q^2)_\infty (qz^2;q^2)_\infty} \frac{\mathrm{d}z}{2\pi iz} \nonumber \\
&=\oint \frac{(-q^{\frac{1}{2}+\nu}z,-q^{\frac{1}{2}-\nu}z,-q^{\frac{1}{2}}z,-q^{\frac{1}{2}}/z,q;q)_\infty}{(q^{\frac{1}{2}}z;q)_\infty (z^2;q^2)_\infty}\frac{\mathrm{d}z}{2\pi iz} \nonumber \\
&=\oint \sum_{i\geq 0} \frac{q^{\frac{i^2}{2}+\nu i}z^i}{(q;q)_i} \sum_{j\geq 0} \frac{q^{\frac{j^2}{2}-\nu j}z^j}{(q;q)_j} \sum_{k\geq 0} \frac{q^{\frac{k}{2}}z^k}{(q;q)_k} \sum_{\ell\geq 0} \frac{z^{2\ell}}{(q^2;q^2)_\ell} \sum_{m=-\infty}^\infty z^{-m} q^{\frac{m^2}{2}} \frac{\mathrm{d}z}{2\pi iz} \nonumber \\
&\qquad \qquad \qquad \qquad \qquad \qquad \qquad \qquad  \text{(by \eqref{Euler} and \eqref{Jacobi})} \nonumber \\
&=\sum_{i,j,k,\ell \geq 0} \frac{q^{\frac{1}{2}i^2+\frac{1}{2}j^2+\frac{1}{2}(i+j+k+2\ell)^2+\nu(i-j)+\frac{1}{2}k}}{(q;q)_i(q;q)_j(q;q)_k(q^2;q^2)_\ell} \nonumber \\
&=\sum_{i,j,k,\ell \geq 0} \frac{q^{(\nu-1)(i+\ell)} \cdot q^{(-\nu-1)(j+\ell)} \cdot q^{\binom{i+j+k+2\ell+1}{2}+\binom{i+1}{2}+\binom{j+1}{2}+\ell}}{(q;q)_i(q;q)_j(q;q)_k(q^2;q^2)_\ell} \nonumber \\
&=(-q;q)_\infty (-q^{1+\nu},-q^{1-\nu};q^2)_\infty. \quad \text{(by \eqref{DL1112} with $a=q^{\nu-1}$, $b=q^{-\nu-1}$)}
\end{align}
This proves \eqref{exam7-1}.

(2) The left side of \eqref{exam7-2} is just $F(q,q,1;q^{\frac{1}{2}})$. By \eqref{exam7-F-start} we have
\begin{align}
&F(q,q,1;q^{\frac{1}{2}})=\oint \frac{(-q^{\frac{3}{2}}z,-q^{\frac{3}{2}}z,-q^{\frac{1}{2}}z,-q^{\frac{1}{2}}z,-q^{\frac{1}{2}}/z,q;q)_\infty}{(qz^2,q^2z^2;q^2)_\infty} \frac{\mathrm{d}z}{2\pi iz} \nonumber \\
&=\oint \frac{(-q^{\frac{3}{2}}z,-q^{\frac{3}{2}}z,-q^{\frac{1}{2}}z,-q^{\frac{1}{2}}/z,q;q)_\infty}{(q^{\frac{1}{2}}z;q)_\infty (q^2z^2;q^2)_\infty} \frac{\mathrm{d}z}{2\pi iz} \nonumber \\
&=\oint \sum_{i\geq 0} \frac{q^{\frac{1}{2}i^2+i}z^i}{(q;q)_i} \sum_{j\geq 0} \frac{q^{\frac{1}{2}j^2+j}z^j}{(q;q)_j} \sum_{k\geq 0}\frac{q^{\frac{k}{2}}z^k}{(q;q)_k} \sum_{\ell \geq 0} \frac{q^{2\ell}z^{2\ell}}{(q^2;q^2)_{\ell}} \sum_{m=-\infty}^\infty z^{-m}q^{\frac{m^2}{2}} \frac{\mathrm{d}z}{2\pi iz} \nonumber \\
&\quad \quad \quad \quad \quad \quad \text{(by \eqref{Euler} and \eqref{Jacobi})}  \nonumber \\
&=\sum_{i,j,k,\ell\geq 0} \frac{q^{\frac{1}{2}i^2+\frac{1}{2}j^2+\frac{1}{2}(i+j+k+2\ell)^2+i+j+\frac{1}{2}k+2\ell}}{(q;q)_i(q;q)_j(q;q)_k(q^2;q^2)_\ell} \nonumber \\
&=\sum_{i,j,k,\ell \geq 0} \frac{q^{\binom{i+j+k+2\ell+1}{2}+\binom{i+1}{2}+\binom{j+1}{2}+\ell}}{(q;q)_i(q;q)_j(q;q)_k(q^2;q^2)_\ell} \nonumber \\
&=(-q;q)_\infty (-q^2,-q^2;q^2)_\infty. \quad \text{(by \eqref{DL1112} with $a=b=1$)}
\end{align}
This proves \eqref{exam7-2}.

(3) The left side of \eqref{exam7-3} is just $F(q,1,q^2;q)$. By \eqref{exam7-F-start} we have
\begin{align}
&F(q,1,q^2;q)=\oint \frac{(-q^2z,-qz,-q^3z,-qz,-q/z,q^2;q^2)_\infty}{(q^2z^2;q^2)_\infty} \frac{\mathrm{d}z}{2\pi iz} \nonumber \\
&=\oint \frac{(-qz;q)_\infty (-q^3z,-qz,-q/z,q^2;q^2)_\infty}{(qz,-qz;q)_\infty} \frac{\mathrm{d}z}{2\pi iz} \nonumber \\
&=\oint \frac{(-q^3z,-qz,-q/z,q^2;q^2)_\infty}{(qz;q)_\infty} \frac{\mathrm{d}z}{2\pi iz} \nonumber \\
&=\oint \sum_{i\geq 0} \frac{z^iq^i}{(q;q)_i} \sum_{j\geq 0} \frac{z^jq^{j^2+2j}}{(q^2;q^2)_j} \sum_{k=-\infty}^\infty z^{-k}q^{k^2} \frac{\mathrm{d}z}{2\pi iz} \quad \text{(by \eqref{Euler} and \eqref{Jacobi})} \nonumber \\
&=\sum_{i,j\geq 0} \frac{q^{i^2+2ij+2j^2+i+2j}}{(q;q)_i(q^2;q^2)_j} =\frac{(q^6;q^6)_\infty^2}{(q^2;q^2)_\infty (q^3;q^3)_\infty}.  \quad \text{(by \eqref{Bressoud-cor-2})} \label{exam7-2-last}
\end{align}
This proves \eqref{exam7-3}.

(4) The left side of \eqref{exam7-4} is just $F(q^{-1},1,1;q)$. By \eqref{exam7-F-start} we have
\begin{align}
&F(q^{-1},1,1;q)=\oint \frac{(-z,-qz,-qz,-qz,-q/z,q^2;q^2)_\infty}{(z^2;q^2)_\infty} \frac{\mathrm{d}z}{2\pi iz} \nonumber \\
&=\oint \frac{(-qz,-qz,-q/z,q^2;q^2)_\infty}{(z;q)_\infty} \frac{\mathrm{d}z}{2\pi iz} \nonumber \\
&=\oint \sum_{i\geq 0} \frac{z^i}{(q;q)_i} \sum_{j\geq 0} \frac{z^j q^{j^2}}{(q^2;q^2)_j} \sum_{k=-\infty}^\infty z^{-k}q^{k^2} \frac{\mathrm{d}z}{2\pi iz} \quad \text{(by \eqref{Euler} and \eqref{Jacobi})} \nonumber \\
&=\sum_{i,j\geq 0} \frac{q^{i^2+2ij+2j^2}}{(q;q)_i(q^2;q^2)_j} =\frac{(q^3;q^3)_\infty^2}{(q;q)_\infty (q^6;q^6)_\infty}. \quad \text{(by \eqref{Bressoud-cor-1})} \label{exam7-4-last}
\end{align}
This proves \eqref{exam7-4}.

(5) The left side of \eqref{exam7-5} is just $F(q^{-2},q^{-1},q^{-1};q)$. By \eqref{exam7-F-start} we have
\begin{align}
&F(q^{-2},q^{-1},q^{-1};q)=\oint \frac{(-q^{-1}z,-z,-z,-qz,-q/z,q^2;q^2)_\infty}{(q^{-2}z^2;q^2)_\infty} \frac{\mathrm{d}z}{2\pi iz} \nonumber \\
&=\oint \frac{(-z,-qz,-q/z,q^2;q^2)_\infty}{(q^{-1}z;q)_\infty} \frac{\mathrm{d}z}{2\pi iz} \nonumber \\
&=\oint \sum_{i\geq 0} \frac{q^{-i}z^i}{(q;q)_i} \sum_{j\geq 0} \frac{q^{j^2-j}z^j}{(q^2;q^2)_j} \sum_{k=-\infty}^\infty z^{-k}q^{k^2} \frac{\mathrm{d}z}{2\pi iz}  \quad  \text{(by \eqref{Euler} and \eqref{Jacobi})} \nonumber \\
&=\sum_{i,j\geq 0} \frac{q^{i^2+2ij+2j^2-i-j}}{(q;q)_i(q^2;q^2)_j} =(-1;q)_\infty. \quad \text{(by \eqref{eq12-Cao-Wang} with $u=-1$)}
\end{align}
This proves \eqref{exam7-5}.
\end{proof}

Since the value of $\nu$ in \eqref{exam7-1} is arbitrary, we actually proved the following result.
\begin{corollary}\label{cor-7}
We have
\begin{align}
\sum_{i,j,k\geq 0} \frac{q^{i^2+j^2+k^2+i(j+k)}}{(q;q)_i(q;q)_j(q;q)_k} a^{j-k}=\frac{(-aq,-q/a,q^2;q^2)_\infty}{(q;q)_\infty}.
\end{align}
\end{corollary}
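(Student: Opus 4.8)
The plan is to read off Corollary \ref{cor-7} from identity \eqref{exam7-1} of Theorem \ref{thm-7}. Setting $a=q^\nu$, the left-hand side of the corollary becomes $\sum_{i,j,k\geq 0} q^{i^2+j^2+k^2+ij+ik+\nu(j-k)}/\big((q;q)_i(q;q)_j(q;q)_k\big)$ and the right-hand side becomes $(-q^{1+\nu},-q^{1-\nu},q^2;q^2)_\infty/(q;q)_\infty$, so the corollary at $a=q^\nu$ is exactly \eqref{exam7-1}. The first thing I would note is that the proof of \eqref{exam7-1} given above --- evaluating the contour integral for $F(1,q^\nu,q^{-\nu};q^{1/2})$ via \eqref{exam7-F-start} and \eqref{Euler}, then applying \eqref{DL1112} with parameters $q^{\nu-1}$ and $q^{-\nu-1}$ --- uses nothing about $\nu$ beyond its being a real number; in particular it is valid for every $\nu\in\mathbb{R}$. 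Hence the corollary already holds for all $a$ in $q^{\mathbb{R}}$, which for fixed real $q\in(0,1)$ is the entire ray $(0,\infty)$.

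To promote this to all $a\in\mathbb{C}\setminus\{0\}$ I would invoke analyticity. For fixed real $q$ with $0<q<1$, both sides of the asserted identity are holomorphic in $a$ on $\mathbb{C}\setminus\{0\}$: on the left, the factor $q^{j^2+k^2}$ decays faster than any fixed power of $a^{\pm1}$ grows, so the triple series converges locally uniformly on $\mathbb{C}\setminus\{0\}$; on the right, $(-aq;q^2)_\infty$ is entire and $(-q/a;q^2)_\infty=\prod_{n\geq 0}(1+q^{2n+1}/a)$ is holomorphic away from $a=0$. Two such functions agreeing on the ray $(0,\infty)$ must agree on all of $\mathbb{C}\setminus\{0\}$ by the identity theorem. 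If one prefers to take \eqref{exam7-1} only for rational $\nu$, the same argument applies since $\{q^\nu:\nu\in\mathbb{Q}\}$ already has accumulation points, for instance $q^{1/n}\to 1$.

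There is essentially no obstacle here: the content of the corollary is just that the dependence of \eqref{exam7-1} on $\nu$ enters only through the free parameters of \eqref{DL1112}. One could even bypass the analytic-continuation step altogether by rerunning the first case of the proof of Theorem \ref{thm-7} with $a$ kept as a formal parameter: the left side of the corollary is $F(1,a,a^{-1};q^{1/2})$, the quantity $z^2vw$ appearing in \eqref{exam7-F-start} is again just $z^2$, and expanding $(-aq^{1/2}z;q)_\infty$ and $(-a^{-1}q^{1/2}z;q)_\infty$ by \eqref{Euler} produces a quadruple sum to which \eqref{DL1112} applies with parameters $a/q$ and $1/(aq)$, yielding $(-q;q)_\infty(-aq,-q/a;q^2)_\infty$ directly; a short bookkeeping check confirms the exponents match $\binom{i+j+k+2\ell+1}{2}+\binom{i+1}{2}+\binom{j+1}{2}+\ell$ after pulling out the factor $(a/q)^{i+\ell}(aq)^{-(j+\ell)}$.
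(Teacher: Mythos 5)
Your proposal is correct and coincides with the paper's own derivation: the paper obtains Corollary \ref{cor-7} precisely by noting that $\nu$ in \eqref{exam7-1} is arbitrary (equivalently, that the proof via \eqref{exam7-F-start} and \eqref{DL1112} treats $q^{\nu}$ as a free parameter), which is exactly your main observation. Your added identity-theorem step (or the direct rerun with $a$ in place of $q^{\nu}$, whose exponent bookkeeping checks out) merely makes explicit the routine extension to general nonzero $a$ that the paper leaves implicit.
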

By \eqref{Jacobi}, we have
\begin{align}\label{cor-7-R-coeff}
[a^r] \frac{(-aq,-q/a,q^2;q^2)_\infty}{(q;q)_\infty}=\frac{q^{r^2}}{(q;q)_\infty}.
\end{align}
On the other hand, we have
\begin{align}\label{cor-7-L-coeff}
[a^r]\sum_{i,j,k\geq 0} \frac{q^{i^2+j^2+k^2+i(j+k)}}{(q;q)_i(q;q)_j(q;q)_k} a^{j-k} =\sum_{i=0}^\infty \frac{q^{i^2}}{(q;q)_i} \sum_{j-k=r} \frac{q^{j^2+k^2+i(j+k)}}{(q;q)_j(q;q)_k}.
\end{align}
By the truth of \eqref{exam7-1}, we know that \eqref{cor-7-R-coeff} is the same with \eqref{cor-7-L-coeff}. It would be quite interesting if one can prove this fact directly, which will yield a new proof for \eqref{exam7-1}.

\subsection{Example 8.}
The matrix and vector parts for this example are
$$A=\begin{pmatrix}
4 & 2 & 1 \\ 2 & 2 & 0 \\ 1 & 0 & 1
\end{pmatrix}, \quad B\in \left\{\begin{pmatrix} -1 \\ -1 \\ 1/2 \end{pmatrix}, \begin{pmatrix} 0 \\ -1/2 \\ 1/2 \end{pmatrix}, \begin{pmatrix} 0 \\ 0 \\ 1/2 \end{pmatrix},
\begin{pmatrix} 2 \\ 1 \\ 1/2 \end{pmatrix}, \begin{pmatrix} 0 \\ 0 \\ 0 \end{pmatrix},  \begin{pmatrix} 2 \\ 1 \\ 1 \end{pmatrix}\right\}.$$
We find a missing case. Namely, $f_{A,B,C}(q)$ is also modular for $B=(1,0,1/2)^\mathrm{T}$ and $C=1/12$.
\begin{theorem}\label{thm-8}
We have
\begin{align}
\sum_{i,j,k\geq 0} \frac{q^{2i^2+j^2+\frac{1}{2}k^2+2ij+ik-i-j+\frac{1}{2}k}}{(q;q)_i(q;q)_j(q;q)_k}&=2\frac{(q^2;q^2)_\infty^2}{(q;q)_\infty^2}, \label{exam8-1} \\
\sum_{i,j,k\geq 0}\frac{q^{2i^2+j^2+\frac{1}{2}k^2+2ij+ik-\frac{1}{2}j+\frac{1}{2}k}}{(q;q)_i(q;q)_j(q;q)_k}&=\frac{1}{(q^{\frac{1}{2}};q)_\infty}, \label{exam8-2} \\
\sum_{i,j,k\geq 0} \frac{q^{2i^2+j^2+\frac{1}{2}k^2+2ij+ik+i+\frac{1}{2}k}}{(q;q)_i(q;q)_j(q;q)_k} &=\frac{(q^2;q^2)_\infty^2}{(q;q)_\infty^2}, \label{exam8-new} \\
\sum_{i,j,k\geq 0}\frac{q^{2i^2+j^2+\frac{1}{2}k^2+2ij+ik+\frac{1}{2}k}}{(q;q)_i(q;q)_j(q;q)_k}&=\frac{(q^2;q^2)_\infty (q^3;q^3)_\infty^2}{(q;q)_\infty^2 (q^6;q^6)_\infty}, \label{exam8-3} \\
\sum_{i,j,k\geq 0}\frac{q^{2i^2+j^2+\frac{1}{2}k^2+2ij+ik+2i+j+\frac{1}{2}k}}{(q;q)_i(q;q)_j(q;q)_k}&=\frac{(q^6;q^6)_\infty^2}{(q;q)_\infty (q^3;q^3)_\infty}, \label{exam8-4} \\
\sum_{i,j,k\geq 0}\frac{q^{4i^2+2j^2+k^2+4ij+2ik}}{(q^2;q^2)_i(q^2;q^2)_j(q^2;q^2)_k}&=\frac{1}{(q,q^3,q^4,q^8,q^9,q^{11};q^{12})_\infty}, \label{exam8-5}   \\
\sum_{i,j,k\geq 0}\frac{q^{4i^2+2j^2+k^2+4ij+2ik+4i+2j+2k}}{(q^2;q^2)_i(q^2;q^2)_j(q^2;q^2)_k}&=\frac{1}{(q^3,q^4,q^5,q^7,q^8,q^9;q^{12})_\infty}. \label{exam8-6}
\end{align}
\end{theorem}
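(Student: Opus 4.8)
The plan is to prove all seven identities by the same two‑move reduction: sum over the third index $k$ using Euler's identity \eqref{Euler}, then recognise the resulting double sum as one that is already available. For the five identities with base $q$, I would introduce
$$F(u,v,w;q):=\sum_{i,j,k\geq 0}\frac{u^iv^jw^kq^{2i^2+j^2+\frac12 k^2+2ij+ik}}{(q;q)_i(q;q)_j(q;q)_k},$$
so that the left sides of \eqref{exam8-1}, \eqref{exam8-2}, \eqref{exam8-new}, \eqref{exam8-3}, \eqref{exam8-4} are $F$ at $(u,v,w)=(q^{-1},q^{-1},q^{1/2})$, $(1,q^{-1/2},q^{1/2})$, $(q,1,q^{1/2})$, $(1,1,q^{1/2})$, $(q^2,q,q^{1/2})$ respectively, and for \eqref{exam8-5}--\eqref{exam8-6} I would use the analogous series in base $q^2$.

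First I would sum over $k$. In each case the $k$‑part of the exponent has the shape $\binom{k}{2}+(i+1)k$ relative to the base in use (using $\frac12 k^2+ik+\frac12 k=\binom{k}{2}+(i+1)k$ in the base‑$q$ cases and $k^2+2ik=2\binom{k}{2}+(2i+1)k$ in the base‑$q^2$ cases), so \eqref{Euler} yields $(-q^{i+1};q)_\infty=(-q;q)_\infty/(-q;q)_i$ (base $q$) or $(-q^{2i+1};q^2)_\infty=(-q;q^2)_\infty/(-q;q^2)_i$ (base $q^2$), and the same with $i+1$ in place of $i$ for \eqref{exam8-6}. The triple sum thereby collapses to $(-q;q)_\infty$ (resp.\ $(-q;q^2)_\infty$) times a double sum over $i,j$ with quadratic form $2i^2+2ij+j^2$. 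I would then absorb the surviving $(-q;\cdot)_i$ factor using the elementary factorisations $(q;q)_i(-q;q)_i=(q^2;q^2)_i$ in the base‑$q$ cases, and $(q^2;q^2)_i(-q;q^2)_i=((-q);(-q))_{2i}$, $(q^2;q^2)_i(-q;q^2)_{i+1}=((-q);(-q))_{2i+1}$ in the base‑$q^2$ cases.

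After swapping the roles of $i$ and $j$, the quadratic form becomes $i^2+2ij+2j^2$ with denominator $(q;q)_i(q^2;q^2)_j$, which is exactly the shape of the left sides of \eqref{Bressoud-cor-1}, \eqref{Bressoud-cor-2} and \eqref{eq12-Cao-Wang}. Concretely, \eqref{exam8-3} reduces to \eqref{Bressoud-cor-1}, \eqref{exam8-4} to \eqref{Bressoud-cor-2}, and \eqref{exam8-1}, \eqref{exam8-new}, \eqref{exam8-2} to the Cao--Wang identity \eqref{eq12-Cao-Wang} specialised at $u=-1$, $u=-q$, $u=-q^{1/2}$ respectively; for the last two, the double sum produced is \emph{precisely} \eqref{eq-Wang-1} (resp.\ \eqref{eq-Wang-2}) of Theorem \ref{thm-Wang} with $q$ replaced by $-q$ — which is exactly why the factorisations of $((-q);(-q))_{2i}$ and $((-q);(-q))_{2i+1}$ are needed. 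It then remains to rewrite the product sides: for \eqref{exam8-1}--\eqref{exam8-4} and \eqref{exam8-new} one uses only Euler's product identities such as $(-q;q)_\infty=(q^2;q^2)_\infty/(q;q)_\infty$, $(-1;q)_\infty=2(-q;q)_\infty$ and the elementary $(-q;q)_\infty(-q^{1/2};q)_\infty=1/(q^{1/2};q)_\infty$; for \eqref{exam8-5}--\eqref{exam8-6} one splits $(-q;q^2)_\infty=(q^2,q^6,q^{10};q^{12})_\infty/(q,q^3,q^5,q^7,q^9,q^{11};q^{12})_\infty$ and $(q^2;q^2)_\infty$ into products modulo $12$ and merges them with $(q^5,q^7,q^{12};q^{12})_\infty$ (resp.\ $(q,q^{11},q^{12};q^{12})_\infty$) to obtain the stated six‑term products.

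I expect the main difficulty to be organisational rather than deep: one must pin down, for each of the (seven, after adjoining the missing vector part $B=(1,0,1/2)^{\mathrm{T}}$) cases, the correct monomial specialisation of $(u,v,w)$ and then the correct companion identity together with its parameter. The one genuinely non‑routine observation is that replacing $q$ by $-q$ in Theorem \ref{thm-Wang} turns its denominator $(q;q)_{2i}$ (resp.\ $(q;q)_{2i+1}$) into exactly the factor $(q^2;q^2)_i(-q;q^2)_i$ (resp.\ $(q^2;q^2)_i(-q;q^2)_{i+1}$) thrown up by the $k$‑summation; once this is seen, \eqref{exam8-5} and \eqref{exam8-6} are immediate. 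A minor point of care is that \eqref{exam8-2} lives in $q^{1/2}$, so half‑integer powers must be carried consistently through the reduction.
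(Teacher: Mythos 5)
Your proposal is correct and follows essentially the same route as the paper: the same generating function $F(u,v,w;q)$, summation over $k$ via \eqref{Euler}, absorption of $(-q;q)_i$ (resp.\ $(-q;q^2)_i$, $(-q;q^2)_{i+1}$) into the denominators, and reduction to \eqref{eq12-Cao-Wang}, \eqref{Bressoud-cor-1}, \eqref{Bressoud-cor-2}, and Theorem \ref{thm-Wang} with $q\mapsto -q$. The only cosmetic difference is that the paper treats \eqref{exam8-1}, \eqref{exam8-2}, \eqref{exam8-new} uniformly through the one-parameter specialisation $(u,v,w)=(t^2q^{-1},tq^{-1},q^{1/2})$ with $t=1,q^{1/2},q$, whereas you specialise the Cao--Wang identity at $u=-1,-q^{1/2},-q$ case by case.
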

\begin{proof}
We consider
\begin{align}
F(u,v,w;q):=\sum_{i,j,k\geq 0}\frac{q^{2i^2+j^2+\frac{1}{2}k^2+2ij+ik}u^iv^jw^k}{(q;q)_i(q;q)_j(q;q)_k}.
\end{align}
By \eqref{Euler} we have
\begin{align}
F(u,v,w;q)&=\sum_{i,j\geq 0}\frac{u^iv^jq^{2i^2+j^2+2ij}}{(q;q)_i(q;q)_j} \sum_{k\geq 0} \frac{q^{\frac{1}{2}k^2-\frac{1}{2}k}\cdot (wq^{i+\frac{1}{2}})^k}{(q;q)_k} \nonumber \\
&=\sum_{i,j\geq 0}\frac{u^iv^jq^{2i^2+2ij+j^2}}{(q;q)_i (q;q)_j} (-wq^{\frac{1}{2}+i};q)_\infty. \label{exam8-F}
\end{align}
Setting $(u,v,w)$ as $(t^2q^{-1},tq^{-1},q^{\frac{1}{2}})$, by \eqref{exam8-F} we obtain
\begin{align}
&F(t^2q^{-1},tq^{-1},q^{\frac{1}{2}};q)=\sum_{i,j\geq 0}\frac{t^{2i+j}q^{2i^2+2ij+j^2-i-j}}{(q;q)_i (q;q)_j} (-q^{1+i};q)_\infty \nonumber \\
&=(-q;q)_\infty \sum_{i,j\geq 0}\frac{t^{2i+j}q^{2i^2+2ij+j^2-i-j}}{(q^2;q^2)_i(q;q)_j} =(-q;q)_\infty (-t;q)_\infty. \label{exam8-start}
\end{align}
Here for the last equality we used \eqref{eq12-Cao-Wang} with $i,j$ interchanged.

Setting $t=1,q^{\frac{1}{2}}$ and $q$ in \eqref{exam8-start}, we obtain \eqref{exam8-1}, \eqref{exam8-2} and \eqref{exam8-new}, respectively.

Setting $(u,v,w)=(1,1,q^{\frac{1}{2}})$, by \eqref{exam8-F} and \eqref{Bressoud-cor-1} we have
\begin{align}
F(1,1,q^{\frac{1}{2}};q)&=(-q;q)_\infty \sum_{i,j\geq 0} \frac{q^{2i^2+2ij+j^2}}{(q^2;q^2)_i(q;q)_j}=(-q;q)_\infty \frac{(q^3;q^3)_\infty^2}{(q;q)_\infty (q^6;q^6)_\infty}.
\end{align}
This proves \eqref{exam8-3}.

Setting $(u,v,w)=(q^2,q,q^{\frac{1}{2}})$, by \eqref{exam8-F} and \eqref{Bressoud-cor-2} we have
\begin{align}
F(q^2,q,q^{\frac{1}{2}};q)=(-q;q)_\infty \sum_{i,j\geq 0} \frac{q^{2i^2+2ij+j^2+2i+j}}{(q^2;q^2)_i(q;q)_j}=(-q;q)_\infty \frac{(q^6;q^6)_\infty^2}{(q^2;q^2)_\infty (q^3;q^3)_\infty}.
\end{align}
This proves \eqref{exam8-4}.

By \eqref{exam8-F} we have
\begin{align}\label{exam8-5-proof}
F(1,1,1;q^2)&=\sum_{i,j\geq 0} \frac{q^{4i^2+4ij+2j^2}}{(q^2;q^2)_i (q^2;q^2)_j} \cdot \frac{(-q;q^2)_\infty}{(-q;q^2)_i}=:T(q).
\end{align}
Replacing $q$ by $-q$, we have
\begin{align}
T(-q)&=(q;q^2)_\infty \sum_{i,j\geq 0}\frac{q^{4i^2+4ij+2j^2}}{(q;q)_{2i} (q^2;q^2)_{j}}=\frac{(q;q^2)_\infty (-q^5,-q^7,q^{12};q^{12})_\infty}{(q^2;q^2)_\infty}.
\end{align}
Here we used \eqref{eq-Wang-1}. Now replacing $q$ back to $-q$ and using \eqref{exam8-5-proof}, we get \eqref{exam8-5}.

Next, by \eqref{exam8-F} we have
\begin{align}\label{exam8-6-proof}
F(q^4,q^2,q^2;q^2)&=(-q;q^2)_\infty \sum_{i,j\geq 0} \frac{q^{4i^2+4ij+2j^2+4i+2j}}{(q^2;q^2)_i(q^2;q^2)_j(-q;q^2)_{i+1}}=:R(q).
\end{align}
Replacing $q$ by $-q$, we get by \eqref{eq-Wang-2} that
\begin{align}
R(-q)=(q;q^2)_\infty \sum_{i,j\geq 0} \frac{q^{4i^2+4ij+2j^2+4i+2j}}{(q;q)_{2i+1} (q^2;q^2)_{j}}=\frac{(q;q^2)_\infty (-q,-q^{11},q^{12};q^{12})_\infty}{(q^2;q^2)_\infty}.
\end{align}
Replacing $q$ back to $-q$ and using \eqref{exam8-6-proof}, we obtain \eqref{exam8-6}.
\end{proof}

\begin{rem}
From \eqref{exam8-F} we deduce that
\begin{align}\label{exam8-rem}
F(u,-u,q^{\frac{1}{2}};q)=(-q;q)_\infty \sum_{i,j,k\geq 0} \frac{(-1)^j u^{i+j}q^{2i^2+2ij+j^2}}{(q^2;q^2)_i (q;q)_j} =(-q;q)_\infty (uq;q^2)_\infty.
\end{align}
Here the last equality follows from \eqref{eq-Cao-Wang328}. This will produce some modular forms for $u=\pm 1, \pm q, \pm q^{-1}$. However, the left side is an alternating series and not of the form $f_{A,B,C}(q)$. Nevertheless, we get the following curious identity when $u=1$:
\begin{align}\label{eq-exam8-curious}
\sum_{i,j,k\geq 0}\frac{(-1)^jq^{2i^2+j^2+\frac{1}{2}k^2+2ij+ik+\frac{1}{2}k}}{(q;q)_i(q;q)_j(q;q)_k}=1.
\end{align}
\end{rem}

\subsection{Example 9.}
The matrix and vector parts for this example are
\begin{align*}
A=\begin{pmatrix} 6 &4 & 2 \\ 4 & 4 & 2 \\ 2 &2 & 2 \end{pmatrix}, \quad
B\in \left\{\begin{pmatrix} 0 \\ 0 \\ 0 \end{pmatrix}, \begin{pmatrix} 1  \\ 0  \\ 0 \end{pmatrix}, \begin{pmatrix} 2 \\ 1 \\ 0\end{pmatrix}, \begin{pmatrix} 3 \\ 2 \\ 1 \end{pmatrix}\right\}.
\end{align*}
Note that the last case was missing in Zagier's list \cite[Table 3]{Zagier}.

\begin{theorem}\label{thm-9}
We have
\begin{align}
\sum_{i,j,k\geq 0}\frac{q^{3i^2+2j^2+k^2+4ij+2ik+2jk}}{(q;q)_i(q;q)_j(q;q)_k}&=\frac{(q^4,q^5,q^9;q^9)_\infty}{(q;q)_\infty}, \label{exam9-1} \\
\sum_{i,j,k\geq 0}\frac{q^{3i^2+2j^2+k^2+4ij+2ik+2jk+i}}{(q;q)_i(q;q)_j(q;q)_k}&=\frac{(q^3,q^6,q^9;q^9)_\infty}{(q;q)_\infty}, \label{exam9-2} \\
\sum_{i,j,k\geq 0}\frac{q^{3i^2+2j^2+k^2+4ij+2ik+2jk+2i+j}}{(q;q)_i(q;q)_j(q;q)_k}&=\frac{(q^2,q^7,q^9;q^9)_\infty}{(q;q)_\infty}, \label{exam9-3} \\
\sum_{i,j,k\geq 0}\frac{q^{3i^2+2j^2+k^2+4ij+2ik+2jk+3i+2j+k}}{(q;q)_i(q;q)_j(q;q)_k}&=\frac{(q,q^8,q^9;q^9)_\infty}{(q;q)_\infty}. \label{exam9-4}
\end{align}
\end{theorem}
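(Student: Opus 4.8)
The plan is to recognize all four identities \eqref{exam9-1}--\eqref{exam9-4} as the instances $s=4,3,2,1$ of the Andrews--Gordon identity \eqref{AG} with $k=4$; here $2k+1=9$, which is why the product sides have modulus $9$. First I would specialize \eqref{AG} to $k=4$, so that its left side is a triple sum over $n_1,n_2,n_3\geq 0$ with $N_1=n_1+n_2+n_3$, $N_2=n_2+n_3$, $N_3=n_3$ and $N_4=0$. Renaming the summation indices by $n_1\mapsto k$, $n_2\mapsto j$, $n_3\mapsto i$ and expanding,
\begin{align*}
N_1^2+N_2^2+N_3^2=(i+j+k)^2+(i+j)^2+i^2=3i^2+2j^2+k^2+4ij+2ik+2jk,
\end{align*}
which is exactly $\frac12 n^{\mathrm T}An$ for the matrix part of Example 9, while the denominator $(q;q)_{n_1}(q;q)_{n_2}(q;q)_{n_3}$ becomes $(q;q)_i(q;q)_j(q;q)_k$ as in \eqref{exam9-1}--\eqref{exam9-4}.

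Next I would read off the linear parts from the exponent term $N_s+\cdots+N_3$ of \eqref{AG}: it is the empty sum $0$ when $s=4$, equals $N_3=i$ when $s=3$, equals $N_2+N_3=2i+j$ when $s=2$, and equals $N_1+N_2+N_3=3i+2j+k$ when $s=1$. These are precisely the linear terms in \eqref{exam9-1}, \eqref{exam9-2}, \eqref{exam9-3}, \eqref{exam9-4}, and they match the four vector parts $B=(0,0,0)^{\mathrm T},(1,0,0)^{\mathrm T},(2,1,0)^{\mathrm T},(3,2,1)^{\mathrm T}$. On the other side, the product $(q^s,q^{9-s},q^9;q^9)_\infty/(q;q)_\infty$ appearing in \eqref{AG} gives, for $s=4,3,2,1$ in turn, exactly the four right-hand sides. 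Hence each of the four identities is an immediate consequence of \eqref{AG}.

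The proof has essentially no obstacle: it is the purely mechanical task of matching one quadratic form and four linear forms with the data of \eqref{AG} at $k=4$. The only point worth flagging is that the case $s=1$, i.e.\ $B=(3,2,1)^{\mathrm T}$, is absent from Zagier's Table 3 in \cite{Zagier}, so Theorem \ref{thm-9} also records this missing modular triple, its scalar part $C$ being recovered from the product side in the usual way.
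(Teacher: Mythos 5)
Your proof is correct and matches the paper's own argument, which likewise obtains all four identities by setting $k=4$ and $s=4,3,2,1$ in the Andrews--Gordon identity \eqref{AG}; your index matching and the observation about the extra vector $B=(3,2,1)^{\mathrm T}$ simply make explicit what the paper states tersely.
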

\begin{proof}
If we set $k=4$ and $s=4,3,2,1$ in the Andrews-Gordon identity \eqref{AG}, we obtain \eqref{exam9-1}--\eqref{exam9-4}, respectively.
\end{proof}

\subsection{Example 10.}
The matrix and vector parts for this example are
\begin{align*}
A=\begin{pmatrix} 4 & 2 & 2 \\ 2 & 2 & 1 \\ 2 & 1 & 2 \end{pmatrix}, \quad & B\in \Bigg\{ \begin{pmatrix} 0 \\ -1/2 \\ 0 \end{pmatrix}, \begin{pmatrix} 0 \\ 0 \\ -1/2 \end{pmatrix},
\begin{pmatrix}  0 \\ 0 \\ 0 \end{pmatrix}, \begin{pmatrix} 1 \\ 0 \\ 1/2 \end{pmatrix}, \\
& \begin{pmatrix} 1 \\ 0 \\ 1 \end{pmatrix}, \begin{pmatrix} 1 \\ 1/2 \\ 0 \end{pmatrix}, \begin{pmatrix} 1 \\ 1 \\ 0 \end{pmatrix},
\begin{pmatrix} 2 \\ 1 \\ 1 \end{pmatrix} \Bigg\}.
\end{align*}
Since the quadratic form $n^\mathrm{T}A n$ is symmetric in $n_2$ and $n_3$, there are essentially five identities.
\begin{theorem}\label{thm-10}
We have
\begin{align}
\sum_{i,j,k\geq 0} \frac{q^{4i^2+2j^2+2k^2+4ij+4ik+2jk-j}}{(q^2;q^2)_i(q^2;q^2)_j(q^2;q^2)_k} &=\frac{1}{(q,q^4;q^5)_\infty},  \label{exam10-1}\\
\sum_{i,j,k\geq 0} \frac{q^{2i^2+j^2+k^2+2ij+2ik+jk}}{(q;q)_i (q;q)_j (q;q)_k}&=\frac{1}{(q,q^4;q^5)_\infty^2}, \label{exam10-2} \\
\sum_{i,j,k\geq 0} \frac{q^{4i^2+2j^2+2k^2+4ij+4ik+2jk+2i+j}}{(q^2;q^2)_i(q^2;q^2)_j(q^2;q^2)_k}&=\frac{1}{(q^2,q^3;q^5)_\infty}, \label{exam10-3} \\
\sum_{i,j,k\geq 0} \frac{q^{2i^2+j^2+k^2+2ij+2ik+jk+i+j}}{(q;q)_i (q;q)_j (q;q)_k}&=\frac{(q^5;q^5)_\infty}{(q;q)_\infty}, \label{exam10-4} \\
\sum_{i,j,k\geq 0} \frac{q^{2i^2+j^2+k^2+2ij+2ik+jk+2i+j+k}}{(q;q)_i (q;q)_j (q;q)_k}&=\frac{1}{(q^2,q^3;q^5)_\infty^2}. \label{exam10-5}
\end{align}
\end{theorem}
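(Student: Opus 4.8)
The plan is to deduce all five identities from one generating function. Since $2i^2+j^2+k^2+2ij+2ik+jk=\tfrac12\big((2i+j+k)^2+j^2+k^2\big)$, set
$$F(u,v,w;q):=\sum_{i,j,k\geq 0}\frac{q^{(2i+j+k)^2+j^2+k^2}\,u^iv^jw^k}{(q^2;q^2)_i(q^2;q^2)_j(q^2;q^2)_k}.$$
Then \eqref{exam10-1} and \eqref{exam10-3} are $F(1,q^{-1},1;q)$ and $F(q^2,q,1;q)$, while \eqref{exam10-2}, \eqref{exam10-4} and \eqref{exam10-5} are $F(1,1,1;q^{1/2})$, $F(q,q,1;q^{1/2})$ and $F(q^2,q,q;q^{1/2})$. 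Introducing a Jacobi-triple-product variable $z$ for the square $(2i+j+k)^2$ via \eqref{Jacobi}, summing over $i$ by the first Euler identity in \eqref{Euler} and over $j,k$ by the second (after writing $q^{j^2}=q^{j}q^{2\binom j2}$), one gets
$$F(u,v,w;q)=(q^2;q^2)_\infty\oint \frac{(-qvz,-qwz,-qz,-q/z;q^2)_\infty}{(uz^2;q^2)_\infty}\,\frac{dz}{2\pi iz}.$$

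For \eqref{exam10-1} and \eqref{exam10-3} the parameter $u$ is a perfect square, so $(uz^2;q^2)_\infty=(u^{1/2}z;q)_\infty(-u^{1/2}z;q)_\infty$, and a pair of numerator factors collapses (via identities such as $(-z;q^2)_\infty(-qz;q^2)_\infty=(-z;q)_\infty$) to cancel the factor $(-u^{1/2}z;q)_\infty$ entirely. The integral reduces to $(q^2;q^2)_\infty\oint (-qz,-q/z;q^2)_\infty\,(u^{1/2}z;q)_\infty^{-1}\,dz/2\pi iz$, and expanding both pieces by \eqref{Euler} and \eqref{Jacobi} and extracting the constant term gives $\sum_{n\geq 0}q^{n^2+cn}/(q;q)_n$ with $c=0$ or $1$; by the Rogers--Ramanujan identities \eqref{RR-1}--\eqref{RR-2} these equal $1/(q,q^4;q^5)_\infty$ and $1/(q^2,q^3;q^5)_\infty$.

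For \eqref{exam10-2}, \eqref{exam10-4} and \eqref{exam10-5} the analogous partial cancellation leaves an integrand whose denominator still has three simple-pole factors of the shape $(c_iz;q)_\infty^{-1}$, to which I would apply Lemma \ref{lem-integral}. This writes $F$ as a sum of three single-fold series; after clearing the half-integral powers (replacing $q$ by $q^2$) each series turns out to be a rescaling of one of \eqref{Slater21}--\eqref{eq-MSP} or of its $q\mapsto -q$ companion among \eqref{Slater21-dual}--\eqref{eq-MSP-dual}. Summing the three resulting eta-quotients and simplifying --- a routine theta computation via \eqref{Jacobi}, or by the algorithm of \cite{Garvan-Liang} --- produces $1/(q,q^4;q^5)_\infty^2$, $(q^5;q^5)_\infty/(q;q)_\infty$ and $1/(q^2,q^3;q^5)_\infty^2$, that is $G^2$, $GH$ and $H^2$ for the Rogers--Ramanujan functions $G=1/(q,q^4;q^5)_\infty$ and $H=1/(q^2,q^3;q^5)_\infty$.

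The conceptual ingredients are all standard, so I expect the main difficulty to be purely computational: keeping track of which numerator factors collapse against $(uz^2;q^2)_\infty$ once $v,w$ carry powers of $q$, verifying the convergence side-condition \eqref{cond} of Lemma \ref{lem-integral} in the three harder cases, and carrying out the final theta-function simplification that converts the mod-$10$ and mod-$20$ products into the mod-$5$ products appearing in the statement.
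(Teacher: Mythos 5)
Your proposal is correct and follows essentially the same route as the paper: the same generating function $F(u,v,w;q)$, the same contour-integral representation via \eqref{Euler} and \eqref{Jacobi}, the easy reduction of \eqref{exam10-1} and \eqref{exam10-3} to \eqref{RR-1}--\eqref{RR-2}, and Lemma \ref{lem-integral} for \eqref{exam10-2}, \eqref{exam10-4}, \eqref{exam10-5} followed by a Garvan--Liang style theta verification. The only small correction is that not all three residue series in the harder cases come from \eqref{Slater21}--\eqref{eq-MSP} and their duals: one series in each case instead requires \eqref{Slater43}, \eqref{eq-Rogers-mod10}, or \eqref{Slater45} with $q$ replaced by $q^2$ (and since $C=3>A=2$ in each application, the side condition \eqref{cond} is not actually needed).
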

\begin{proof}
We define
\begin{align}
F(u,v,w;q):=\sum_{i,j,k\geq 0} \frac{u^iv^jw^kq^{2i^2+j^2+k^2+2ij+2ik+jk}}{(q;q)_i(q;q)_j(q;q)_k}.
\end{align}
Replacing $q$ by $q^2$, by \eqref{Euler} and \eqref{Jacobi} we have
\begin{align}
F(u,v,w;q^2)&=\sum_{i,j,k\geq 0}\frac{u^iv^jw^k q^{(2i+j+k)^2+j^2+k^2}}{(q^2;q^2)_i(q^2;q^2)_j(q^2;q^2)_k} \nonumber \\
&=\oint \sum_{i\geq 0}\frac{u^i z^{2i}}{(q^2;q^2)_i} \sum_{j\geq 0}\frac{q^{j^2}v^jz^j}{(q^2;q^2)_j} \sum_{k\geq 0}\frac{q^{k^2}w^kz^k}{(q^2;q^2)_k} \sum_{\ell=-\infty}^\infty q^{\ell^2}z^{-\ell} \frac{\mathrm{d}z} {2\pi iz}  \nonumber \\
&=\oint \frac{(-vqz,-wqz,-qz,-q/z,q^2;q^2)_\infty}{(uz^2;q^2)_\infty} \frac{\mathrm{d}z} {2\pi iz}. \label{exam10-F}
\end{align}

(1) By \eqref{exam10-F}, the left side of \eqref{exam10-1} is the same with
\begin{align}
&F(1,q^{-1},1;q^2)=\oint \frac{(-z,-qz,-qz,-q/z,q^2;q^2)_\infty}{(z^2;q^2)_\infty} \frac{\mathrm{d}z} {2\pi iz} \nonumber \\
&=\oint \frac{(-qz,-q/z,q^2;q^2)_\infty}{(z;q)_\infty}  \frac{\mathrm{d}z} {2\pi iz} =\oint \sum_{n=0}^\infty \frac{z^n}{(q;q)_n} \sum_{\ell=-\infty}^\infty z^{-\ell}q^{\ell^2} \frac{\mathrm{d}z} {2\pi iz} \nonumber \\
&=\sum_{n=0}^\infty \frac{q^{n^2}}{(q;q)_n}.
\end{align}
By \eqref{RR-1} we get \eqref{exam10-1}.

(2) After replacing $q$ by $q^2$ and using \eqref{exam10-F}, the left side of \eqref{exam10-2} is the same with
\begin{align}
F(1,1,1;q^2)&=\oint \frac{(-qz,-qz,-qz,-q/z,q^2;q^2)_\infty}{(z^2;q^2)_\infty} \frac{\mathrm{d}z} {2\pi iz} \nonumber \\
&=\oint \frac{(-qz,-qz,-q/z,q^2;q^2)_\infty}{(z,-z,qz;q^2)_\infty} \frac{\mathrm{d}z} {2\pi iz}. \label{exam10-2-F}
\end{align}
Applying Lemma \ref{lem-integral} with $q$ replaced by $q^2$ and
\begin{align*}
(A,B,C,D)=(2,1,3,0), \quad (a_1,a_2)=(-q,-q), \\
 b_1=-q, \quad (c_1,c_2,c_3)=(1,-1,q),
\end{align*}
we deduce that
\begin{align}
F(1,1,1;q^2)=R_1(q)+R_2(q)+R_3(q), \label{exam10-2-split}
\end{align}
where
\begin{align}
R_1(q)&=\frac{(-q,-q,-q;q^2)_\infty}{(-1,q;q^2)_\infty}\sum_{n=0}^\infty \frac{q^{n(n+2)}(-q,-q;q^2)_n}{(q^2,-q,-q^2,q;q^2)_n}, \\
R_2(q)&=\frac{(q,q,q;q^2)_\infty}{(-1,-q;q^2)_\infty}\sum_{n=0}^\infty \frac{(-1)^nq^{n(n+2)}(q,q;q^2)_n}{(q^2,q,-q^2,-q;q^2)_n}, \\
R_3(q)&=\frac{(-q^2,-1,-1;q^2)_\infty}{(-q^{-1},q^{-1};q^2)_\infty} \sum_{n=0}^\infty \frac{q^{n(n+3)}(-q^2,-q^2;q^2)_n}{(q^2,-q^2,-q^3,q^3;q^2)_n}.
\end{align}
Now we evaluate these sums one by one.

We have
\begin{align}
R_1(q)&=\frac{(-q;q^2)_\infty^3}{2(-q^2,q;q^2)_\infty}\sum_{n=0}^\infty \frac{q^{n(n+2)}(-q;q^2)_n}{(q;q^2)_n(q^4;q^4)_n} =\frac{1}{2}\frac{J_2^8J_{10}^2J_{20}^5}{J_1^4J_4^4J_{3,20}J_{4,20}^2J_{5,20}^2J_{6,20}J_{7,20}}. \label{exam10-2-R1-result}
\end{align}
Here for the second equality we used \eqref{eq-BMS-dual}.

Similarly, by \eqref{eq-BMS} we have
\begin{align}
R_2(q)&=\frac{(q;q^2)_\infty^3}{2(-q;q)_\infty}\sum_{n=0}^\infty \frac{(-1)^nq^{n(n+2)}(q;q^2)_n}{(-q;q^2)_n(q^4;q^4)_n} =\frac{1}{2} \frac{J_1^5J_5^2J_{2,10}}{J_2^6J_{10}J_{1,5}}. \label{exam10-2-R2-result}
\end{align}
Next, by \eqref{Slater43} with $q$ replaced by $q^2$, we have
\begin{align}
R_3(q)&=-4q^2\frac{(-q^2;q^2)_\infty^3}{(-q,q;q^2)_\infty}\sum_{n=0}^\infty \frac{q^{n(n+3)}(-q^2;q^2)_n}{(q^2;q^2)_n(q^2;q^4)_{n+1}} =-4q^2\frac{J_4^4J_{20}^3}{J_2^5J_{10}J_{6,20}}. \label{exam10-2-R3-result}
\end{align}
Now substituting \eqref{exam10-2-R1-result}--\eqref{exam10-2-R3-result} into \eqref{exam10-2-split}, using the method in \cite{Garvan-Liang}, it is easy to verify that \eqref{exam10-2} holds.

(3) By \eqref{exam10-F}, the left side of \eqref{exam10-3} is the same with
\begin{align}
&F(q^2,q,1;q^2)=\oint \frac{(-q^2z,-qz,-qz,-q/z,q^2;q^2)_\infty}{(q^2z^2;q^2)_\infty} \frac{\mathrm{d}z} {2\pi iz} \nonumber \\
&=\oint \frac{(-qz,-q/z,q^2;q^2)_\infty}{(qz;q)_\infty} \frac{\mathrm{d}z} {2\pi iz}=\sum_{n=0}^\infty \frac{q^nz^n}{(q;q)_n} \sum_{\ell=-\infty}^\infty z^{-\ell}q^{\ell^2} \frac{\mathrm{d}z} {2\pi iz} \nonumber \\
&=\sum_{n=0}^\infty \frac{q^{n^2+n}}{(q;q)_n}.
\end{align}
By \eqref{RR-2} we get \eqref{exam10-3}.

(4) After replacing $q$ by $q^2$ and using \eqref{exam10-F}, the left side of \eqref{exam10-4} is the same with
\begin{align}
F(q^2,q^2,1;q^2)&=\oint \frac{(-q^3z,-qz,-qz,-q/z,q^2;q^2)_\infty}{(q^2z^2;q^2)_\infty}\frac{\mathrm{d}z}{2\pi iz} \nonumber \\
&=\oint \frac{(-qz,-q^3z,-q/z,q^2;q^2)_\infty}{(qz,q^2z,-q^2z;q^2)_\infty} \frac{\mathrm{d}z}{2\pi iz}. \label{exam10-4-F}
\end{align}
Applying Lemma \ref{lem-integral} with $q$ replaced by $q^2$ and
\begin{align*}
(A,B,C,D)=(2,1,3,0), \quad (a_1,a_2)=(-q,-q^3), \\
b_1=-q,\quad (c_1,c_2,c_3)=(q,q^2,-q^2),
\end{align*}
we deduce that
\begin{align}\label{exam10-4-split}
F(q^2,q^2,1;q^2)=S_1(q)+S_2(q)+S_3(q)
\end{align}
where
\begin{align}
S_1(q)&=\frac{(-q^2,-1,-q^2;q^2)_\infty}{(q,-q;q^2)_\infty}\sum_{n=0}^\infty \frac{q^{n(n+1)}(-q^2,-1;q^2)_n}{(q^2,-q^2,q,-q;q^2)_n}, \\
S_2(q)&=\frac{(-q^3,-q^{-1},-q;q^2)_\infty}{(q^{-1},-1;q^2)_\infty} \sum_{n=0}^\infty \frac{q^{n(n+2)}(-q^3,-q;q^2)_n}{(q^2,-q^2,q^3,-q^3;q^2)_n}, \\
S_3(q)&=\frac{(q^3,q,q^{-1};q^2)_\infty}{(-1,-q^{-1};q^2)_\infty} \sum_{n=0}^\infty \frac{(-1)^nq^{n(n+2)}(q^3,q;q^2)_n}{(q^2,q^3,-q^2,-q^3;q^2)_n}.
\end{align}
We now evaluate these sums one by one.

We have
\begin{align}
S_1(q)=2\frac{(-q^2;q^2)_\infty^3}{(q^2;q^4)_\infty} \sum_{n=0}^\infty \frac{q^{n(n+1)}(-1;q^2)_n}{(q^2;q^2)_n(q^2;q^4)_n}=2\frac{J_4^5J_{10}^2}{J_2^6J_{20}}. \label{exam10-4-S1-result}
\end{align}
where the last equality follows from \eqref{eq-Rogers-mod10} with $q$ replaced by $q^2$.

Next, using \eqref{eq-MSP-dual} and \eqref{eq-MSP} we get
\begin{align}
S_2(q)&=-\frac{(-q;q^2)_\infty^3}{2(-q^2;q^2)_\infty (q;q^2)_\infty} \sum_{n=0}^\infty \frac{q^{n(n+2)}(-q;q^2)_n}{(q;q^2)_{n+1}(q^4;q^4)_n}=-\frac{1}{2}\frac{J_2^9J_5J_{20}}{J_1^5J_4^5J_{10}}, \label{exam10-4-S2-result} \\
S_3(q)&=-\frac{(q;q^2)_\infty^3}{2(-q^2;q^2)_\infty (-q;q^2)_\infty} \sum_{n=0}^\infty \frac{(-1)^nq^{n(n+2)}(q;q^2)_n }{(-q;q^2)_{n+1}(q^4;q^4)_n}=-\frac{1}{2}\frac{J_1^5J_{10}^2}{J_2^6J_5}. \label{exam10-4-S3-result}
\end{align}
Now substituting \eqref{exam10-4-S1-result}--\eqref{exam10-4-S3-result} into \eqref{exam10-4-split}, using the method in \cite{Garvan-Liang}, it is easy to verify that \eqref{exam10-4} holds.

(5) After replacing $q$ by $q^2$ and using \eqref{exam10-F}, the left side of \eqref{exam10-5} is the same with
\begin{align}
F(q^4,q^2,q^2;q^2)&=\oint \frac{(-q^3z,-q^3z,-qz,-q/z,q^2;q^2)_\infty}{(q^4z^2;q^2)_\infty} \frac{\mathrm{d}z}{2\pi iz} \nonumber \\
&=\oint \frac{(-q^3z,-qz,-q/z,q^2;q^2)_\infty}{(q^2z,-q^2z,q^3z;q^2)_\infty} \frac{\mathrm{d}z}{2\pi iz}. \label{exam10-5-F}
\end{align}
Applying Lemma \ref{lem-integral} with $q$ replaced by $q^2$ and
\begin{align*}
(A,B,C,D)=(2,1,3,0), \quad (a_1,a_2)=(-q^3,-q), \\
b_1=-q, \quad (c_1,c_2,c_3)=(q^2,-q^2,q^3),
\end{align*}
we deduce that
\begin{align}
F(q^4,q^2,q^2;q^2)=T_1(q)+T_2(q)+T_3(q), \label{exam10-5-split}
\end{align}
where
\begin{align}
T_1(q)&=\frac{(-q^3,-q,-q^{-1};q^2)_\infty}{(-1,q;q^2)_\infty} \sum_{n=0}^\infty \frac{q^{n^2}(-q,-q^3;q^2)_n}{(q^2,-q^3,-q^2,q;q^2)_n}, \\
T_2(q)&=\frac{(q^3,q,q^{-1};q^2)_\infty}{(-1,-q;q^2)_\infty} \sum_{n=0}^\infty \frac{(-1)^nq^{n^2}(q,q^3;q^2)_n}{(q^2,q^3,-q^2,-q;q^2)_n},\\
T_3(q)&=\frac{(-q^4,-1,-q^{-2};q^2)_\infty}{(-q^{-1},q^{-1};q^2)_\infty} \sum_{n=0}^\infty \frac{q^{n(n+1)}(-q^2,-q^4;q^2)_n}{(q^2,-q^4,-q^3,q^3;q^2)_n}.
\end{align}
Now we evaluate these sums one by one.

By \eqref{Slater21-dual} we have
\begin{align}
T_1(q)&=\frac{1}{2}q^{-1}\frac{(-q;q^2)_\infty^3}{(-q^2,q;q^2)_\infty} \sum_{n=0}^\infty \frac{q^{n^2}(-q;q^2)_n}{(q;q^2)_n(q^4;q^4)_n} \nonumber \\
&=\frac{1}{2}q^{-1}\frac{J_2^8J_{10}^2J_{20}^5}{J_1^4J_4^4J_{1,20}J_{2,20}J_{5,20}^2J_{8,20}^2J_{9,20}}. \label{exam10-5-T1-result}
\end{align}
Similarly, using \eqref{Slater21} we have
\begin{align}
T_2(q)&=-\frac{1}{2}q^{-1}\frac{(q;q^2)_\infty^3}{(-q^2,-q;q^2)_\infty} \sum_{n=0}^\infty \frac{(-1)^nq^{n^2}(q;q^2)_n}{(-q;q^2)_n(q^4;q^4)_n} =-\frac{1}{2}q^{-1}\frac{J_1^4J_5^2J_{1,10}}{J_2^4J_{10}J_{2,10}^2}. \label{exam10-5-T2-result}
\end{align}
We can also get \eqref{exam10-5-T2-result} directly from \eqref{exam10-5-T1-result}  by noting that $T_2(q)=T_1(-q)$.

Next, using \eqref{Slater45} with $q$ replaced by $q^2$ we have
\begin{align}
T_3(q)&=-4\frac{(-q^2;q^2)_\infty^3}{(-q,q;q^2)_\infty} \sum_{n=0}^\infty \frac{q^{n(n+1)}(-q^2;q^2)_n}{(q^2;q^2)_n(q^2;q^4)_{n+1}} =-4\frac{J_4^4J_{20}^3}{J_2^5J_{10}J_{2,20}}. \label{exam10-5-T3-result}
\end{align}
Substituting \eqref{exam10-5-T1-result}--\eqref{exam10-5-T3-result} into \eqref{exam10-5-split}, using the method in \cite{Garvan-Liang}, it is easy to verify that \eqref{exam10-5} holds.
\end{proof}
\begin{rem}\label{rem-Andrews}
The identity \eqref{exam10-2} appeared in Andrews' paper \cite[Eq.\ (4.4)]{Andrews1981HJM}. According to \cite{Andrews1981HJM}, it was first discovered by Andrews through computer search. Then R. Askey provided a proof of it, and Andrews \cite{Andrews1981HJM} proved the following general version:
\begin{align}
\sum_{n_1,\cdots,n_r,v_1,\cdots,v_{r-1}\geq 0} \frac{q^{Q_{2r-1}(n_1,\cdots,n_{r},v_1,\cdots,v_{r-1})}}{(q;q)_{n_1}\cdots (q;q)_{n_r}(q;q)_{v_1}\cdots (q;q)_{v_{r-1}}}=\frac{1}{(q,q^4;q^5)_\infty^r}, \label{eq-Andrews-general}
\end{align}
where
\begin{align*}
&Q_{2r-1}(n_1,\cdots,n_r,v_1,\cdots,v_{r-1})\nonumber \\
&=(n_1+v_1+\cdots+v_{r-1})^2+(n_2+v_1)^2+(n_3+v_2)^2+\cdots+(n_r+v_{r-1})^2\nonumber \\
&+(n_1+v_2+\cdots+v_r)n_2+(n_1+v_3+\cdots+v_r)n_3+\cdots +n_1n_r.
\end{align*}
Our proof here is quite different from the proofs given by Askey and Andrews.
\end{rem}

Using the integral representations in \eqref{exam10-2-F}, \eqref{exam10-4-F} and \eqref{exam10-5-F}, we get two equivalent identities for each of \eqref{exam10-2}, \eqref{exam10-4} and \eqref{exam10-5}.
\begin{corollary}\label{cor-10}
We have
\begin{align}
\sum_{i,j,k\geq 0} \frac{q^{\frac{1}{2}i^2+j^2+2k^2+ij+2ik+2jk+\frac{1}{2}i}}{(q;q)_i(q;q)_j(q^2;q^2)_k}&=\frac{1}{(q,q^4;q^5)_\infty^2}, \label{cor-10-1} \\
\sum_{i,j,k\geq 0} \frac{(-1)^jq^{i^2+j^2+2k^2+2ij+2ik+2jk}}{(q;q)_i(q^2;q^2)_j(q^2;q^2)_k}&=\frac{1}{(q^2,q^8;q^{10})_\infty^2}, \label{cor-10-2} \\
\sum_{i,j,k\geq 0} \frac{q^{\frac{1}{2}i^2+j^2+2k^2+ij+2ik+2jk+\frac{1}{2}i+j+2k}}{(q;q)_i(q;q)_j(q^2;q^2)_k}&=\frac{(q^5;q^5)_\infty}{(q;q)_\infty},\label{cor-10-3} \\
\sum_{i,j,k\geq 0} \frac{(-1)^jq^{i^2+j^2+2k^2+2ij+2ik+2jk+i+2j+2k}}{(q;q)_i(q^2;q^2)_j(q^2;q^2)_k}&=\frac{(q^{10};q^{10})_\infty}{(q^2;q^2)_\infty}, \label{cor-10-4} \\
\sum_{i,j,k\geq 0} \frac{q^{\frac{1}{2}i^2+j^2+2k^2+ij+2ik+2jk+\frac{3}{2}i+j+2k}}{(q;q)_i(q;q)_j(q^2;q^2)_k}&=\frac{1}{(q^2,q^3;q^5)_\infty^2}, \label{cor-10-5}\\
\sum_{i,j,k\geq 0} \frac{(-1)^jq^{i^2+j^2+2k^2+2ij+2ik+2jk+2i+2j+2k}}{(q;q)_i(q^2;q^2)_j(q^2;q^2)_k}&=\frac{1}{(q^4,q^6;q^{10})_\infty^2}. \label{cor-10-6}
\end{align}
\end{corollary}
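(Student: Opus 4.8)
The plan is to re-use, rather than re-derive, the contour integrals behind Theorem~\ref{thm-10}. In its proof, equations \eqref{exam10-2-F}, \eqref{exam10-4-F} and \eqref{exam10-5-F} express $F(1,1,1;q^{2})$, $F(q^{2},q^{2},1;q^{2})$ and $F(q^{4},q^{2},q^{2};q^{2})$ as $\oint P(z)\,\mathrm{d}z/(2\pi i z)$ for explicit infinite products $P(z)$, and there the integrals were evaluated by Lemma~\ref{lem-integral}. For the corollary I would instead expand each integrand $P(z)$ directly as a product of $q$-series in $z$ via Euler's identities \eqref{Euler} and the Jacobi triple product \eqref{Jacobi}, and read off the coefficient of $z^{0}$ by \eqref{int-constant}; regrouping the Pochhammer factors of $P(z)$ differently from the grouping needed for Lemma~\ref{lem-integral} yields a different multiple series, hence a ``new'' identity whose right-hand side is the already known value. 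Since $F(u,v,w;Q)$ makes sense for any base $Q$, replacing $q$ by $q^{1/2}$ in \eqref{exam10-2-F}, \eqref{exam10-4-F}, \eqref{exam10-5-F} gives integral representations of $F(1,1,1;q)$, $F(q,q,1;q)$, $F(q^{2},q,q;q)$ --- that is, of the left sides of \eqref{exam10-2}, \eqref{exam10-4}, \eqref{exam10-5} --- and it is these half-base representations that will produce \eqref{cor-10-1}, \eqref{cor-10-3}, \eqref{cor-10-5}, while the original base-$q^{2}$ representations produce \eqref{cor-10-2}, \eqref{cor-10-4}, \eqref{cor-10-6}.

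To illustrate, take \eqref{exam10-2-F}, whose integrand is
\[
\frac{(-qz,-qz,-q/z,q^{2};q^{2})_\infty}{(z,-z,qz;q^{2})_\infty}.
\]
For \eqref{cor-10-2} I would write the denominator as $(z,qz;q^{2})_\infty(-z;q^{2})_\infty=(z;q)_\infty(-z;q^{2})_\infty$, expand $1/(z;q)_\infty$, $1/(-z;q^{2})_\infty$ and one copy of $(-qz;q^{2})_\infty$ by \eqref{Euler}, apply \eqref{Jacobi} to the remaining $(q^{2},-qz,-q/z;q^{2})_\infty$, and extract $[z^{0}]$ via \eqref{int-constant}; the bilateral Jacobi index is pinned down by the other three summation indices, and the resulting triple sum is precisely the left side of \eqref{cor-10-2}, with right side $F(1,1,1;q^{2})=1/(q^{2},q^{8};q^{10})_\infty^{2}$ from Theorem~\ref{thm-10}. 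For \eqref{cor-10-1} I would first replace $q$ by $q^{1/2}$, so that the same integrand becomes $(-q^{1/2}z,-q^{1/2}z,-q^{1/2}/z,q;q)_\infty/(z,-z,q^{1/2}z;q)_\infty$, then write the denominator as $(z,-z;q)_\infty(q^{1/2}z;q)_\infty=(z^{2};q^{2})_\infty(q^{1/2}z;q)_\infty$ and run the same expand-and-extract procedure; the factor $(z^{2};q^{2})_\infty^{-1}$ supplies the $(q^{2};q^{2})_k$ in \eqref{cor-10-1}, and the half-integer powers of $q$ come from the $q^{1/2}$. The identities \eqref{cor-10-3} and \eqref{cor-10-5} follow from \eqref{exam10-4-F} and \eqref{exam10-5-F} in exactly the same way --- after $q\mapsto q^{1/2}$ group the denominator as $(q^{1/2}z;q)_\infty(qz,-qz;q)_\infty$ --- and \eqref{cor-10-4}, \eqref{cor-10-6} from \eqref{exam10-4-F}, \eqref{exam10-5-F} directly, grouping the denominator as $(qz,q^{2}z;q^{2})_\infty(-q^{2}z;q^{2})_\infty$ and $(q^{2}z,q^{3}z;q^{2})_\infty(-q^{2}z;q^{2})_\infty$ respectively. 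In every case the right-hand side is the product value of $F$ already produced in Theorem~\ref{thm-10}, so nothing new must be proved there.

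The whole argument is bookkeeping, and the only point that needs a word of justification is the termwise extraction of $[z^{0}]$: after the regroupings each integrand is, on a suitable circle $|z|=\rho$, an absolutely convergent infinite product times an absolutely convergent bilateral theta series (the same convergence already used in the proof of Theorem~\ref{thm-10}), so interchanging the integral with the sums is legitimate, and the constant term receives exactly one contribution for each choice of the three nonnegative indices. The mild nuisance --- not a genuine obstacle --- is to match, after setting the free Jacobi index equal to minus the (weighted) sum of the other three, the resulting exponent of $q$ against the quadratic form displayed in the corollary; this is a short completion-of-the-square computation, e.g.\ $\tfrac12(i+j+2k)^{2}+\tfrac12 j^{2}+\tfrac12 i$ for \eqref{cor-10-1} and the analogous identities for \eqref{cor-10-2}--\eqref{cor-10-6}.
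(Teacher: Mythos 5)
Your proposal is correct and is essentially the paper's own proof: the paper likewise reuses the integral representations \eqref{exam10-2-F}, \eqref{exam10-4-F}, \eqref{exam10-5-F}, regroups the denominator factors (e.g.\ $(z,-z,qz;q^2)_\infty$ as $(qz;q^2)_\infty(z^2;q^4)_\infty$ or as $(z;q)_\infty(-z;q^2)_\infty$), expands by \eqref{Euler} and \eqref{Jacobi}, extracts the constant term, and equates with the products already established in Theorem \ref{thm-10}, the only cosmetic difference being that the paper performs the base change $q^2\mapsto q$ on the final series rather than substituting $q\mapsto q^{1/2}$ in the integral. Just note the small slip that for \eqref{cor-10-5} the denominator of \eqref{exam10-5-F} after the substitution should be grouped as $(q^{3/2}z;q)_\infty(qz,-qz;q)_\infty$, not $(q^{1/2}z;q)_\infty(qz,-qz;q)_\infty$.
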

\begin{proof}
By \eqref{exam10-2-F}, \eqref{Euler} and \eqref{Jacobi} we have
\begin{align}
F(1,1,1;q^2)&=\oint \frac{(-qz,-qz,-q/z,q^2;q^2)_\infty}{(qz;q^2)_\infty (z^2;q^4)_\infty} \frac{\mathrm{d}z}{2\pi iz} \nonumber \\
&=\oint \sum_{i\geq 0} \frac{q^iz^i}{(q^2;q^2)_i} \sum_{j\geq 0} \frac{q^{j^2}z^j}{(q^2;q^2)_j} \sum_{k\geq 0} \frac{z^{2k}}{(q^4;q^4)_k} \sum_{\ell=-\infty}^\infty z^{-\ell}q^{\ell^2} \frac{\mathrm{d}z}{2\pi iz} \nonumber \\
&=\sum_{i,j,k\geq 0} \frac{q^{i+j^2+(i+j+2k)^2}}{(q^2;q^2)_i(q^2;q^2)_j(q^4;q^4)_k}. \label{cor-10-1-proof}
\end{align}
Since we have proved that
\begin{align}\label{cor-10-12-id}
F(1,1,1;q)=\frac{1}{(q,q^4;q^5)_\infty^2}.
\end{align}
Replacing $q^2$ by $q$ in \eqref{cor-10-1-proof}, we obtain \eqref{cor-10-1}.

Now we use \eqref{exam10-2-F} in a different way. We have
\begin{align}
F(1,1,1;q^2)&=\oint \frac{(-qz,-qz,-q/z,q^2;q^2)_\infty}{(z;q)_\infty (-z;q^2)_\infty} \frac{\mathrm{d}z}{2\pi iz} \nonumber \\
&=\oint \sum_{i\geq 0} \frac{z^i}{(q;q)_i} \sum_{j\geq 0} \frac{(-1)^jz^j}{(q^2;q^2)_j} \sum_{k\geq 0} \frac{z^{k}q^{k^2}}{(q^2;q^2)_k}  \sum_{\ell=-\infty}^\infty z^{-\ell}q^{\ell^2} \frac{\mathrm{d}z}{2\pi iz} \nonumber \\
&=\sum_{i,j,k\geq 0} \frac{(-1)^jq^{(i+j+k)^2+k^2}}{(q;q)_i(q^2;q^2)_j(q^2;q^2)_k}.
\end{align}
Now using \eqref{cor-10-12-id} with $q$ replaced by $q^2$, we obtain \eqref{cor-10-2}.

In the same way, using \eqref{exam10-4-F} we can prove \eqref{cor-10-3} and \eqref{cor-10-4}. Using \eqref{exam10-5-F} we can prove \eqref{cor-10-5} and \eqref{cor-10-6}.
\end{proof}

\subsection{Example 11.}
The matrix and vector parts for this example are
$$A=\begin{pmatrix} 4 & 2 & -1 \\ 2 & 2 & -1 \\ -1 & -1 & 1 \end{pmatrix}, \quad
B\in \left\{\begin{pmatrix} 0 \\ 0 \\ 0 \end{pmatrix}, \begin{pmatrix} 0 \\ 0 \\ 1/2 \end{pmatrix},
\begin{pmatrix} 1 \\ 0 \\ 0 \end{pmatrix}, \begin{pmatrix} 2 \\ 1 \\ -1/2 \end{pmatrix},
\begin{pmatrix} 2 \\ 1 \\ 0 \end{pmatrix}\right\}.$$

\begin{theorem}\label{thm-11}
We have
\begin{align}
\sum_{i,j,k\geq 0}\frac{q^{4i^2+2j^2+k^2+4ij-2ik-2jk}}{(q^2;q^2)_i(q^2;q^2)_j(q^2;q^2)_k}&=\frac{J_2^3J_{5,12}}{J_1^2J_4^2}, \label{exam11-1} \\
\sum_{i,j,k\geq 0} \frac{q^{2i^2+j^2+\frac{1}{2}k^2+2ij-ik-jk+\frac{1}{2}k}}{(q;q)_i(q;q)_j(q;q)_k}&=\frac{J_2^2J_3^2}{J_1^3J_{6}}, \label{exam11-2} \\
\sum_{i,j,k\geq 0} \frac{q^{4i^2+2j^2+k^2+4ij-2ik-2jk+2i}}{(q^2;q^2)_i(q^2;q^2)_j(q^2;q^2)_k}&=\frac{J_2^3J_{3,12}}{J_1^2J_4^2}, \label{exam11-3}\\
\sum_{i,j,k\geq 0} \frac{q^{2i^2+j^2+\frac{1}{2}k^2+2ij-ik-jk+2i+j-\frac{1}{2}k}}{(q;q)_i(q;q)_j(q;q)_k}&=2\frac{J_2J_6^2}{J_1^2J_3}, \label{exam11-4} \\
\sum_{i,j,k\geq 0} \frac{q^{4i^2+2j^2+k^2+4ij-2ik-2jk+4i+2j}}{(q^2;q^2)_i(q^2;q^2)_j(q^2;q^2)_k}&=\frac{J_2^3J_{1,12}}{J_1^2J_4^2}. \label{exam11-5}
\end{align}
\end{theorem}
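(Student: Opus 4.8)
The plan is to reduce all five identities to the Bressoud-polynomial evaluations of Theorem~\ref{thm-Bressoud} by summing over the last index $k$ and then collapsing the remaining double sum over $i,j$ via the substitution $n=i+j$. In every case $k$ enters the exponent only through $k^2$ (with coefficient $1$ in \eqref{exam11-1}, \eqref{exam11-3}, \eqref{exam11-5} and $\tfrac12$ in \eqref{exam11-2}, \eqref{exam11-4}), through the cross terms $-ik-jk$, and possibly through a term $\pm\tfrac12 k$; hence the sum over $k$ is a single application of the second Euler identity in \eqref{Euler}. For the three ``even'' identities one writes $q^{k^2}=q^{k}(q^2)^{\binom k2}$ and applies \eqref{Euler} with base $q^2$:
\[
\sum_{k\ge 0}\frac{q^{k^2-2(i+j)k}}{(q^2;q^2)_k}=(-q^{1-2(i+j)};q^2)_\infty ,
\]
while for the two ``odd'' identities one applies \eqref{Euler} with base $q$, getting $(-q^{1-i-j};q)_\infty$ in \eqref{exam11-2} and $(-q^{-i-j};q)_\infty$ in \eqref{exam11-4}.

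Next I would peel off the finite part of each infinite product via the elementary identities $(-q^{1-2n};q^2)_\infty=q^{-n^2}(-q;q^2)_n(-q;q^2)_\infty$, $(-q^{1-n};q)_\infty=q^{-\binom n2}(-1;q)_n(-q;q)_\infty$ and $(-q^{-n};q)_\infty=q^{-\binom{n+1}2}(-1;q)_{n+1}(-q;q)_\infty$, each proved by removing the first few factors and substituting $\ell\mapsto n-1-\ell$ in the exponent. After this substitution with $n=i+j$ the quadratic form collapses: in the even cases $4i^2+2j^2+4ij-(i+j)^2=2i^2+(i+j)^2$, and the shifts become $2i^2+2i+(i+j)^2$ for \eqref{exam11-3} and $2i^2+2i+(i+j)^2+2(i+j)$ for \eqref{exam11-5}; in the odd cases $2i^2+2ij+j^2=i^2+(i+j)^2$, so that after combining with the $q^{-\binom n2}$ (resp.\ $q^{-\binom{n+1}2}$) coming from the product the $n$-exponent is $\binom{n+1}{2}$, possibly plus the linear shift $i$ in \eqref{exam11-4}. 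Summing $i$ from $0$ to $n$ and using $\tfrac{(q;q)_n}{(q;q)_i(q;q)_{n-i}}={n\brack i}$ (with $q\to q^2$ in the even cases), the inner sums become exactly the Bressoud polynomials $B_n^{(1)}$ and $B_n^{(2)}$ of \eqref{defn-Bressoud-poly}. Thus \eqref{exam11-1} turns into $(-q;q^2)_\infty$ times the left side of \eqref{eq-thm-B1}, and \eqref{exam11-3}, \eqref{exam11-5}, \eqref{exam11-2}, \eqref{exam11-4} turn into $(-q;q^2)_\infty$ (resp.\ $(-q;q)_\infty$) times the left sides of \eqref{eq-thm-B3}, \eqref{eq-thm-B5}, \eqref{eq-thm-B2}, \eqref{eq-thm-B4}; in the last two, $(-1;q)_{n+1}=2(-q;q)_n$ is what produces the leading factor $2$ in \eqref{exam11-4}.

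It then remains to invoke Theorem~\ref{thm-Bressoud} and clean up the infinite products, which needs only $(-q;q)_\infty=J_2/J_1$ and $(-q;q^2)_\infty=J_2^2/(J_1J_4)$. For instance \eqref{exam11-1} becomes
\[
(-q;q^2)_\infty^2\cdot\frac{J_{5,12}}{J_2}=\frac{J_2^4}{J_1^2J_4^2}\cdot\frac{J_{5,12}}{J_2}=\frac{J_2^3J_{5,12}}{J_1^2J_4^2},
\]
and the other four follow identically with $J_{5,12}$ replaced by $J_{3,12}$, $J_{1,12}$ and by the products in $J_3,J_6$ produced by \eqref{eq-thm-B2} and \eqref{eq-thm-B4}.

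The argument has no essential obstacle once Theorem~\ref{thm-Bressoud} is in hand; the difficulty is purely organizational. The step most prone to slips is checking that, after the product is peeled, each quadratic form genuinely separates as $2i^2+(i+j)^2$ (or $i^2+\binom{i+j+1}{2}$) with no residual $i$-dependence left outside the Gaussian coefficient, and keeping the linear shifts aligned so that each identity lands on the correct one among \eqref{eq-thm-B1}--\eqref{eq-thm-B5}. One should also recall that \eqref{eq-thm-B3}, which is what \eqref{exam11-3} reduces to, itself relies on the reformulation \eqref{new-Bressoud} of Bressoud's identity rather than on \eqref{Bressoud-2} directly.
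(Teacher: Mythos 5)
Your proposal is correct and follows essentially the same route as the paper: sum over $k$ with Euler's identity \eqref{Euler}, substitute $n=i+j$ and peel the finite part of the resulting infinite product so that the inner sum becomes a Bressoud polynomial weighted exactly as in Theorem \ref{thm-Bressoud}, then invoke \eqref{eq-thm-B1}--\eqref{eq-thm-B5} and simplify the products (with the factor $2$ in \eqref{exam11-4} arising from $(-1;q)_{n+1}=2(-q;q)_n$, just as in the paper's use of $(-1;q)_\infty$). All the exponent bookkeeping you describe matches the paper's computation, so there is nothing to correct.
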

\begin{proof}
(1) We have
\begin{align}
&\sum_{i,j,k\geq 0}\frac{q^{4i^2+2j^2+k^2+4ij-2ik-2jk}}{(q^2;q^2)_i(q^2;q^2)_j(q^2;q^2)_k} \nonumber \\
&=\sum_{i,j\geq 0}\frac{q^{4i^2+2j^2+4ij}}{(q^2;q^2)_i(q^2;q^2)_j}\sum_{k\geq 0}\frac{q^{k^2-k}\cdot q^{(1-2i-2j)k}}{(q^2;q^2)_k} \nonumber \\
&=\sum_{i,j\geq 0}\frac{q^{2i^2+2(i+j)^2}}{(q^2;q^2)_i(q^2;q^2)_j}(-q^{1-2i-2j};q^2)_\infty  \quad \text{(set $n=i+j$)}\nonumber \\
&=\sum_{n=0}^\infty \sum_{i=0}^n \frac{q^{2n^2+2i^2}}{(q^2;q^2)_i(q^2;q^2)_{n-i}}(-q^{1-2n};q^2)_\infty  \nonumber \\
&=(-q;q^2)_\infty \sum_{n=0}^\infty \frac{q^{n^2}(-q;q^2)_n}{(q^2;q^2)_n}\sum_{i=0}^n q^{2i^2} {n\brack i}_{q^2} .
\end{align}
Now by \eqref{eq-thm-B1} we get \eqref{exam11-1}.

(2) We have
\begin{align}
&\sum_{i,j,k\geq 0} \frac{q^{2i^2+j^2+\frac{1}{2}k^2+2ij-ik-jk+\frac{1}{2}k}}{(q;q)_i(q;q)_j(q;q)_k}\nonumber \\
&=\sum_{i,j\geq 0} \frac{q^{2i^2+j^2+2ij}}{(q;q)_i(q;q)_j}\sum_{k\geq 0} \frac{q^{(k^2-k)/2}\cdot (q^{1-i-j})^k}{(q;q)_k} \nonumber \\
&=\sum_{i,j\geq 0}\frac{q^{i^2+(i+j)^2}}{(q;q)_i(q;q)_j} (-q^{1-i-j};q)_\infty \quad \text{(set $n=i+j$)} \nonumber \\
&=\sum_{n=0}^\infty \sum_{i=0}^n \frac{q^{n^2+i^2}}{(q;q)_i(q;q)_{n-i}} (-q^{1-n};q)_\infty \nonumber \\
&=(-q;q)_\infty \sum_{n=0}^\infty \frac{q^{n(n+1)/2}(-1;q)_n}{(q;q)_n} \sum_{i=0}^n q^{i^2} {n\brack i}.
\end{align}
Now by \eqref{eq-thm-B2} we get \eqref{exam11-2}.

(3) We have
\begin{align}
&\sum_{i,j,k\geq 0} \frac{q^{4i^2+2j^2+k^2+4ij-2ik-2jk+2i}}{(q^2;q^2)_i(q^2;q^2)_j(q^2;q^2)_k} \nonumber \\
&=\sum_{i,j\geq 0} \frac{q^{4i^2+2j^2+4ij+2i}}{(q^2;q^2)_i(q^2;q^2)_j} \sum_{k\geq 0}\frac{q^{k^2-k}\cdot q^{(1-2i-2j)k}}{(q^2;q^2)_k} \nonumber \\
&=\sum_{i,j\geq 0}\frac{q^{2i^2+2i+2(i+j)^2}}{(q^2;q^2)_i(q^2;q^2)_j}(-q^{1-2i-2j};q^2)_\infty  \quad \text{(set $n=i+j$)} \nonumber \\
&=\sum_{n=0}^\infty \sum_{i=0}^n \frac{q^{2n^2+2i^2+2i}}{(q^2;q^2)_i(q^2;q^2)_{n-i}}(-q^{1-2n};q^2)_\infty \nonumber \\
&=(-q;q^2)_\infty \sum_{n=0}^\infty  \frac{q^{n^2}(-q;q^2)_n}{(q^2;q^2)_n} \sum_{i=0}^n q^{2i^2+2i} {n \brack i}_{q^2}.
\end{align}
Now by \eqref{eq-thm-B3} we get \eqref{exam11-3}.

(4) We have
\begin{align}
&\sum_{i,j,k\geq 0} \frac{q^{2i^2+j^2+\frac{1}{2}k^2+2ij-ik-jk+2i+j-\frac{1}{2}k}}{(q;q)_i(q;q)_j(q;q)_k}\nonumber \\
&=\sum_{i,j\geq 0}\frac{q^{2i^2+j^2+2ij+2i+j}}{(q;q)_i(q;q)_j} \sum_{k\geq 0}\frac{q^{(k^2-k)/2}\cdot q^{-(i+j)k}}{(q;q)_k} \nonumber \\
&=\sum_{i,j\geq 0}\frac{q^{i^2+i+(i+j)^2+i+j}}{(q;q)_i(q;q)_j} (-q^{-i-j};q)_\infty \quad \text{(set $n=i+j$)} \nonumber \\
&=\sum_{n=0}^\infty \sum_{i=0}^n \frac{q^{n^2+n+i^2+i}}{(q;q)_i(q;q)_{n-i}}(-q^{-n};q)_\infty \nonumber \\
&=(-1;q)_\infty \sum_{n=0}^\infty \frac{q^{(n^2+n)/2}(-q;q)_n}{(q;q)_n}\sum_{i=0}^n q^{i^2+i} { n\brack i}.
\end{align}
Now by \eqref{eq-thm-B4} we get \eqref{exam11-4}.

(5) We have
\begin{align}
&\sum_{i,j,k\geq 0} \frac{q^{4i^2+2j^2+k^2+4ij-2ik-2jk+4i+2j}}{(q^2;q^2)_i(q^2;q^2)_j(q^2;q^2)_k} \nonumber \\
&=\sum_{i,j\geq 0}\frac{q^{4i^2+2j^2+4ij+4i+2j}}{(q^2;q^2)_i(q^2;q^2)_j} \sum_{k\geq 0} \frac{q^{k^2-k}\cdot q^{(1-2i-2j)k}}{(q^2;q^2)_k} \nonumber \\
&=\sum_{i,j\geq 0}\frac{q^{2i^2+2i+2(i+j)^2+2i+2j}}{(q^2;q^2)_i(q^2;q^2)_j} (-q^{1-2i-2j};q^2)_\infty  \quad \text{(set $n=i+j$)} \nonumber \\
&=\sum_{n=0}^\infty \sum_{i=0}^n \frac{q^{2n^2+2n+2i^2+2i}}{(q^2;q^2)_i(q^2;q^2)_{n-i}} (-q^{1-2n};q^2)_\infty \nonumber \\
&=(-q;q^2)_\infty \sum_{n=0}^\infty \frac{q^{n^2+2n}(-q;q^2)_n}{(q^2;q^2)_n} \sum_{i=0}^n q^{2i^2+2i} {n\brack i}_{q^2}.
\end{align}
Now by \eqref{eq-thm-B5} we get \eqref{exam11-5}.
\end{proof}

\subsection{Example 12.}
The matrix and vector parts for this example are
$$A=\begin{pmatrix} 8 & 4 & 1 \\ 4 & 3 & 0 \\ 1 & 0 &1 \end{pmatrix}, \quad
B\in \left\{\begin{pmatrix} 0 \\ -1/2 \\ 1/2 \end{pmatrix}, \begin{pmatrix} 2 \\ 1/2 \\ 1/2 \end{pmatrix} \right\}.$$

\begin{theorem}\label{thm-12}
We have
\begin{align}
\sum_{i,j,k\geq 0} \frac{q^{4i^2+\frac{3}{2}j^2+\frac{1}{2}k^2+4ij+ik-\frac{1}{2}j+\frac{1}{2}k}}{(q;q)_i(q;q)_j(q;q)_k}
&=\frac{(-q;q)_\infty}{(q,q^4;q^5)_\infty}, \label{exam12-1} \\
\sum_{i,j,k\geq 0} \frac{q^{4i^2+\frac{3}{2}j^2+\frac{1}{2}k^2+4ij+ik+2i+\frac{1}{2}j+\frac{1}{2}k}}{(q;q)_i(q;q)_j(q;q)_k}
&=\frac{(-q;q)_\infty}{(q^2,q^3;q^5)_\infty}. \label{exam12-2}
\end{align}
\end{theorem}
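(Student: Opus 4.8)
The plan is to reduce each of the two triple sums to one of the double-sum evaluations in Theorem \ref{thm-12-lem} by carrying out the sum over the index $k$ first, in the spirit of the treatment of Examples 4--6. The starting observation is that in both \eqref{exam12-1} and \eqref{exam12-2} the $k$-dependent part of the exponent is the same, namely $\tfrac12 k^2 + ik + \tfrac12 k = \binom{k}{2} + (i+1)k$. Hence the second Euler identity in \eqref{Euler} gives
\begin{align*}
\sum_{k\geq 0}\frac{q^{\binom{k}{2}}(q^{i+1})^k}{(q;q)_k}=(-q^{i+1};q)_\infty=\frac{(-q;q)_\infty}{(-q;q)_i}.
\end{align*}

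After summing over $k$ and completing squares in $i,j$ (using $\tfrac32 j^2-\tfrac12 j=\tfrac{j(3j-1)}{2}$ for the first identity and $\tfrac32 j^2+\tfrac12 j=\tfrac{j(3j+1)}{2}$ for the second), the left side of \eqref{exam12-1} becomes
\begin{align*}
(-q;q)_\infty\sum_{i,j\geq 0}\frac{q^{4i^2+4ij+\frac{j(3j-1)}{2}}}{(q;q)_i(-q;q)_i(q;q)_j},
\end{align*}
and the left side of \eqref{exam12-2} becomes $(-q;q)_\infty\sum_{i,j\geq 0}q^{4i^2+4ij+\frac{j(3j+1)}{2}+2i}/\big((q;q)_i(-q;q)_i(q;q)_j\big)$. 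Now I would use $(q;q)_i(-q;q)_i=(q^2;q^2)_i$ to rewrite the inner double sums with denominator $(q^2;q^2)_i(q;q)_j$, and then recognize these as exactly \eqref{eq-lem-12-1} and \eqref{eq-lem-12-2}, respectively, after interchanging the summation indices $i$ and $j$ there. The Rogers--Ramanujan products $1/(q,q^4;q^5)_\infty$ and $1/(q^2,q^3;q^5)_\infty$ then appear, and multiplying back by $(-q;q)_\infty$ yields \eqref{exam12-1} and \eqref{exam12-2}.

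I do not expect a genuine obstacle in this step: it is essentially bookkeeping, the only points requiring care being the identification of the quadratic and linear terms in $j$ with $\tfrac{j(3j\mp1)}{2}$ and the verification that the $i\leftrightarrow j$ swapped forms of \eqref{eq-lem-12-1}--\eqref{eq-lem-12-2} match the double sums produced here term by term. The real content has already been absorbed into Theorem \ref{thm-12-lem}, whose proof is the harder part and relies on the constant-term/contour-integral device \eqref{int-constant} together with the Rogers--Ramanujan identities \eqref{RR-1}--\eqref{RR-2}.
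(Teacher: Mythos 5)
Your proposal is correct and coincides with the paper's own proof: the paper likewise sums over $k$ via Euler's identity to produce $(-q^{i+1};q)_\infty=(-q;q)_\infty/(-q;q)_i$, merges $(q;q)_i(-q;q)_i=(q^2;q^2)_i$, and then invokes \eqref{eq-lem-12-1} and \eqref{eq-lem-12-2} with $i$ and $j$ interchanged. All the bookkeeping you outline (the identifications $\tfrac32 j^2\mp\tfrac12 j=\tfrac{j(3j\mp1)}{2}$ and the matching with the swapped double sums) checks out exactly as in the paper.
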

\begin{proof}
We have
\begin{align}
&\sum_{i,j,k\geq 0} \frac{q^{4i^2+\frac{3}{2}j^2+\frac{1}{2}k^2+4ij+ik-\frac{1}{2}j+\frac{1}{2}k}}{(q;q)_i(q;q)_j(q;q)_k}=\sum_{i,j\geq 0} \frac{q^{4i^2+\frac{3}{2}j^2+4ij-\frac{1}{2}j}}{(q;q)_i(q;q)_j} \sum_{k\geq 0}\frac{q^{\frac{1}{2}(k^2-k)+(i+1)k}}{(q;q)_k} \nonumber \\
&=\sum_{i,j\geq 0} \frac{q^{4i^2+4ij+\frac{j(3j-1)}{2}}}{(q;q)_i(q;q)_j}(-q^{i+1};q)_\infty  =(-q;q)_\infty \sum_{i,j\geq 0}\frac{q^{4i^2+4ij+\frac{j(3j-1)}{2}}}{(q^2;q^2)_i (q;q)_j}.
\end{align}
Using \eqref{eq-lem-12-1}  (with $i,j$ interchanged)  we get \eqref{exam12-1}.

Similarly,
\begin{align}
&\sum_{i,j,k\geq 0} \frac{q^{4i^2+\frac{3}{2}j^2+\frac{1}{2}k^2+4ij+ik+2i+\frac{1}{2}j+\frac{1}{2}k}}{(q;q)_i(q;q)_j(q;q)_k}
=\sum_{i,j\geq 0} \frac{q^{4i^2+2i+\frac{j(3j+1)}{2}+4ij}}{(q;q)_i(q;q)_j}\sum_{k\geq 0}\frac{q^{\frac{1}{2}(k^2-k)+(i+1)k}}{(q;q)_k} \nonumber \\
&=\sum_{i,j\geq 0} \frac{q^{4i^2+2i+\frac{j(3j+1)}{2}+4ij}}{(q;q)_i(q;q)_j} (-q^{i+1};q)_\infty =(-q;q)_\infty \sum_{i,j\geq 0}\frac{q^{4i^2+2i+4ij+\frac{j(3j+1)}{2}}}{(q^2;q^2)_i(q;q)_j}.
\end{align}
Using \eqref{eq-lem-12-2} (with $i,j$ interchanged) we get \eqref{exam12-2}.
\end{proof}

We end this paper with the following remark. On page 50 of \cite{Zagier}, Zagier also gave a short discussion on the duality of modular triples. Specifically, if $(A,B,C)$ is a rank $r$ modular triple, then it is likely that
$$(A^\star, B^\star, C^\star)=(A^{-1},A^{-1}B,\frac{1}{2}B^\mathrm{T} A^{-1}B-\frac{r}{24}-C)$$
is also a modular triple. In an undergoing work, we have verified some of these dual triples generated by the modular triples in this paper. This will be discussed in a separate paper.

\subsection*{Acknowledgements}
We thank S.O. Warnaar for some helpful comments, especially for telling us that Theorem \ref{thm-Bressoud-new} and the Bailey pairs \eqref{pair-alpha-2}--\eqref{pair-beta-2} appeared in \cite{ASW} and \cite{Warnaar}. We are grateful to A. Milas for sending us an updated version of \cite{Li-Milas}, from which we know that \eqref{eq-Li-Milas} appeared in Andrews' paper \cite{Andrews1981HJM}. This work was supported by the National Natural Science Foundation of China (12171375).

\end{document}